\newcommand{\dbtilde}[1]{\accentset{\approx}{#1}}
\newcommand{\bigzero}{\mbox{\normalfont\Large\bfseries 0}}
\letcs\replicate{prg_replicate:nn}
\newcommand*\longsum[1][1]{%
	\mathop{\textnormal{%
			\clipbox{0pt 0pt {.5\width} 0pt}{$\displaystyle\sum$}%
			\replicate{#1}{\clipbox{{.5\width} 0pt {.4\width} 0pt}{$\displaystyle\sum$}}%
			\clipbox{{.6\width} 0pt 0pt 0pt}{$\displaystyle\sum$}}}%
}
\newtheorem{theorem}{Theorem}
\newtheorem{proposition}{Proposition}
\newtheorem{lemma}{Lemma}
\newtheorem{definition}{Definition}
\newtheorem{example}{Example}
\newcounter{obsctr}
\newtheorem{remark}{Remark}
\renewcommand{\theequation}{\thesection.\arabic{equation}}
\DeclareMathAlphabet{\mathcalligra}{T1}{calligra}{m}{n}
\DeclareFontShape{T1}{calligra}{m}{n}{<->s*[2.2]callig15}{}
\begin{document}
\title[On a class of globally analytic Hypoelliptic operators]{On a class of globally analytic Hypoelliptic operators
	with non-negative characteristic form}
\author{N. Braun Rodrigues}
\email{braun.rodrigues@gmail.com}
\address{Instituto de Matem\'atica, Estat\'istica e Computação Científica da Unicamp, Rua Sérgio Buarque de Holanda, 651, 3083-859, Campinas, SP, Brazil}
\author{G. Chinni}
\email{gregorio.chinni3@unibo.it/gregorio.chinni@gmail.com}
\address{Dipartimento di Matematica, Universit\`a
	di Bologna, Piazza di Porta San Donato 5, Bologna,
	Italy}
%
%
\date{\today}
%
%
\begin{abstract}
	  The global analytic hypoellipticity is proved for a class
	  of second order partial differential equations with non-negative
	  characteristic form globally defined on the torus. The class considered in this work  generalizes at some degree the class of sum of squares considered by Bove-Chinni and also by Cordaro-Himonas.
\end{abstract}
\keywords{Global analytic hypoellipticity, Second order partial differential equations.}
%
\subjclass[2020]{35H10, 35H20 (primary), 35B65, 35A20, 35A27 (secondary).}
\maketitle
\tableofcontents
\section{Introduction}
\renewcommand{\theequation}{\thesection.\arabic{equation}}
\setcounter{equation}{0} \setcounter{theorem}{0}
\setcounter{proposition}{0} \setcounter{lemma}{0}
\setcounter{corollary}{0} \setcounter{definition}{0}
\setcounter{remark}{1}
%
We deal with the global analytic hypoellipticity
for second order operators with non-negative
characteristic form on the $N$ dimensional torus, generalizing at some degree
the work \cite{BC-2022} concerning sums of squares of vector fields. 
The present paper generalize \cite{BC-2022} in the sense that our class of operators
contains  operators that can not be written as sums of squares,
but our hypothesis are less general as the ones in \cite{BC-2022} in the case of  sums of squares (see Remark \ref{Rk:Appendix} in  the Appendix).
We consider operator of the form
\begin{linenomath}
	\begin{align}\label{HOR_Op}
	P(x,D) = \sum_{\ell,j=1}^{N} a_{\ell,j}(x) D_{\ell} D_{j} + i  \sum_{\ell=1}^{N}  b_{\ell}(x) D_{\ell} +c(x) 
	\end{align}
\end{linenomath}
on $\mathbb{T}^{N}$, where $a_{\ell,j}(x) $, $b_{\ell}(x)$, $\ell,j \in \lbrace 1,\dots,\, N\rbrace$,
are real valued real analytic functions,
the matrix $\mathbf{A}(x)= \left( a_{\ell,j}(x)\right)$ is real symmetric,
$a_{\ell,j}(x) = a_{j,\ell}(x)$, and $A(x) \geq 0$ on $\mathbb{T}^{N}$
(i.e. $\langle \mathbf{A}(x)v, v\rangle \geq 0$ for any $v \in \mathbb{C}^{N}$ and $x \in \mathbb{T}^{N}$)
and  $c(x)$ is an analytic function on $\mathbb{T}^{N}$.\\
We recall that in the local setting (i.e. on open subset of $\mathbb{R}^{N}$) and in more general situation, i.e. when the coefficients
are smooth functions, this class of operators, with non-negative
characteristic form, was intensively studied by O.A.~Ole\u \i nik and  E.V.~Radkevi\v c in \cite{OR1971}. 
We will assume that $P$, \eqref{HOR_Op}, satisfies the \textit{H\"ormander-Ole\u{\i}nik-Radkevi\v c} condition,
see Definition \ref{HOR-c}. As known this class of operators contains the H\"ormander's  class of first kind,
 \cite{H67}, i.e. when $P$ can be written as a sum of squares:
$P=\sum_{j=1}^{r} X_{j}^{2}+ iX_{0}$, where $X_{j}$, $j=0,1,\dots,r$, are vector fields with real analytic coefficients,
and the Lie algebra is generated by the family $X_{j}$, $j=1,\dots,r$.  
We recall that in general an operator of the form \eqref{HOR_Op} can not be written as sum of square,
see Example \ref{Ex1} in the Appendix.\\
We are concerned with the regularity of the solution of the problem $Pu=f$, $f$ analytic function on $\mathbb{T}^{N}$.
In general is not expected that $u$ is analytic, i.e. that $P$ is globally analytic hypoelliptic on $\mathbb{T}^{N}$ (see Definition \ref{D_GGH}.) 
Indeed it is enough to consider the global version of the M\'etivier operator
\footnote{The local vesion of this operator was studied in detail by M\'etivier in \cite{M81}. }
\begin{linenomath}
	\begin{equation}
	\label{eq:Metop}
	M = D_{x}^{2} + (\sin x)^{2} D_{y}^{2} + ( \sin y \  D_{y})^{2} .
	\end{equation}
\end{linenomath}
It was showed in \cite{trevespienza}, see also \cite{BC-2022}, that $M$
is globally Gevrey 2 hypoelliptic on the two dimensional torus, $ \mathbb{T}^{2} $, and not better. 
We remark that $M$ has a characteristic variety which is not symplectic, the Hamilton
leave lies along the $\eta$ fiber of the cotangent bundle (here $\eta$ is the dual variable of $y$.)
We recall that the fact that the characteristic variety is symplectic
is not enough to guarantee the analytic hypoellipticity.
It is sufficient in fact to consider the following generalization of the M\'etivier operator
\begin{linenomath}
	\begin{equation}
	\label{eq:Metop2}
	M_{p,a} =  D_{x}^{2} + (\sin x)^{2(2p-1)} D_{y}^{2} + ((\sin x)^{p-1}
	(\sin y)^{a} \  D_{y})^{2} 
	\end{equation}
\end{linenomath}
where $ p > 1  $, $ a \geq 1 $. In \cite{chinni23-1}, see also\cite{chinni23-2}, was showed that $M_{p,a}$
is globally Gevrey $\frac{2a}{2a-1}$ hypoelliptic on the torus $ \mathbb{T}^{2} $ and not better. 
As stated in \cite{BC-2022} and in some sense conjectured in \cite{trevespienza} and \cite{BoTr-04},
it seems reasonable to surmise that the only globally non analytic
hypoelliptic operators are operators where the ``M\'etivier
situation'' occurs, meaning that there is a Hamilton leaf lying along
i.e. the presence of a non compact Hamilton leaf in the characteristic variety of the operator.
We point out that, at the present, already in the case of sum of squares operators,
there isn't a solid theory that allow to detect the presence of a Hamilton leaf,
at least in dimension greater or equal to 3, see \cite{ABM-16}.\\
As done in \cite{BC-2022}, in order to avoid the presence M\'etivier type of phenomenon
we do some assumptions on the coefficients of $P$,
assumption $\mathbf{(A3)}$, in particular \eqref{A3_1},
which allow us to rule out the existence of a ``Hamilton leaf'' on the characteristic variety
lying along the fiber of the cotangent bundle, i.e. the case of the (global) M\'etivier operator.
We point out that in the case of sums of squares these assumptions are stronger than that
done in \cite{BC-2022}, we believe that this partly depends from the possibility of writing
the operator \eqref{HOR_Op} in the form of sums of squares.
We conclude by observing that the class of operators studied in the present paper
intersects those studied by Cordaro and Himonas in \cite{ch94} (see also \cite{Christ95}, \cite{Tart_96},
\cite{HP2}, \cite{CC-16}, and on compact Lie groups in \cite{bj_2024})
and by Bove and the second author in \cite{BC-2022}.
%
\section{Notations, Definitions, Preliminary facts and Main Results}
\renewcommand{\theequation}{\thesection.\arabic{equation}}
\setcounter{equation}{0} \setcounter{theorem}{0}
\setcounter{proposition}{0} \setcounter{lemma}{0}
\setcounter{corollary}{0} \setcounter{definition}{0}
\setcounter{remark}{1}
Let $\mathbb{T}^{N} = \mathbb{R}^{N}/(2\pi \mathbb{Z}^{N})$ be the $N$-dimensional torus,
where the coordinates are written as $x=(x_{1}, \, \dots, x_{N})$. We denote by $G^{s}(\mathbb{T}^{N})$,
$s\geq 1$, the space of Gevrey functions of order $s$ on $\mathbb{T}^{N}$. In particular
$G^{1}(\mathbb{T}^{N})= C^{\omega}(\mathbb{T}^{N})$ is the space of real-analytic functions on $\mathbb{T}^{N}$.\\
We recall that $u$ belongs to $G^{s}(\mathbb{T}^{N})$ if and only if $u \in C^{\infty}(\mathbb{T}^{N})$
and there is a positive constant $C$ such that
\begin{linenomath}
	\begin{equation}\label{Gs_f}
	\| D^{\alpha} u\|_{0} \leq C^{|\alpha|+1} \left(\alpha!\right)^{s}, \qquad \forall\, \alpha \in \mathbb{Z}^{N}_{+}. 
	\end{equation}
\end{linenomath}
Here, as usual, $D^{\alpha}= D_{1}^{\alpha_{1}} D_{2}^{\alpha_{2}}\cdots D_{N}^{\alpha_{N}}$,
where $D_{j}= \frac{1}{\sqrt{-1}}\frac{\partial}{\partial x_{j}}$.\\
In terms of Fourier coefficients, for a function $u \in C^{\infty}(\mathbb{T}^{N})$ to belong
to $G^{s}(\mathbb{T}^{N})$ it is necessary and sufficient that there is a positive constant $\delta$
such that 
\begin{linenomath}
	\begin{equation}\label{Gs_f2}
	\| e^{\delta |\xi|^{1/s} }\, \widehat{u}(\xi)\|_{\ell_{\infty}(\mathbb{Z}^{N})} < \infty. 
	\end{equation}
\end{linenomath}
We write the Fourier coefficients of $u$ as
\begin{linenomath}
	\begin{equation}\label{fourier_c}
	u(x) = \longsum[5]_{\xi \in \mathbb{Z}^{N}} e^{i\xi x}\, \widehat{u}(\xi), \quad
	\widehat{u}(\xi) = \frac{1}{(2\pi)^{N}}\int_{\mathbb{T}^{N}} e^{-i\xi x} u(x) \, dx. 
	\end{equation}
\end{linenomath}
Sobolev spaces in $\mathbb{T}^{N}$: if $s \in \mathbb{R}$, we denote by $H^{s}(\mathbb{T}^{N})$
the space of all $u \in \mathscr{D}'(\mathbb{T}^{N})$ such that 
\begin{linenomath}
	\begin{equation}\label{Sobolev_1}
	\bigg\{\left(1+ |\xi|^{2}\right)^{\frac{s}{2}}  \widehat{u}(\xi)\bigg\}_{\xi \in \mathbb{Z}^{N}} \in \ell_{2} (\mathbb{Z}^{N}). 
	\end{equation}
\end{linenomath}
The space $H^{s}(\mathbb{T}^{N})$ are Hilbert space equipped with the norm
\begin{linenomath}
	\begin{equation}\label{Sobolev_2}
	\|  u\|_{s}  = \bigg[ \sum_{\xi \in \mathbb{Z}^{N}} \left(1+|\xi|^{2}\right)^{s} |\widehat{u}(\xi)|^{2}\bigg]^{1/2};
	\end{equation}
\end{linenomath}
$\|\cdot\|_{0}$ is the $L^{2}$-norm. Here the space $L^{2}(\mathbb{T}^{N})$ will be endowed
with the inner product
\begin{linenomath}
	\begin{equation}\label{Ptod_L2}
	\langle u,\, v \rangle = \frac{1}{(2\pi)^{N}}\int_{\mathbb{T}^{N}} u(x)\, \overline{v(x)} \, dx. 
	\end{equation}
\end{linenomath}
We recall the definition of global Gevrey/analytic hypoellipticity
\begin{definition}\label{D_GGH}
	Let $P$ be a differential operator on the $N$-dimensional torus. $P$ is said to be
	globally $G^{s}$-hypoelliptic, $s\geq 1$, in $\mathbb{T}^{N}$ if the conditions
	$u \in \mathscr{D}'(\mathbb{T}^{N})$ and $Pu \in G^{s}(\mathbb{T}^{N})$ imply that $ u\in G^{s}(\mathbb{T}^{N})$.
\end{definition}
%
\subsection{Basic Estimate}
The purpose of this paragraph is to obtain the global version
of the estimate obtained, in the local setting, in \cite{D_2020}.
It will be the main tool to obtain our results.  
We consider the operator \eqref{HOR_Op}. 
We associate to $P$ the family of differential operators
$P^{k}$ and $P_{k}$, $k=1,\,\dots,\, N$, of order $1$ and $2$ respectively,
of the following form
\begin{linenomath}
	\begin{align}\label{Pk_ud}
	\begin{cases}
	P^{k}(x,D) = 2 \displaystyle\sum_{\ell =1}^{N} a_{\ell,k}(x) D_{\ell}, 
	&\\
	\noalign{\vskip6pt}
	P_{k}(x,D) = \displaystyle\longsum[4]_{\ell,j =1}^{N}a_{\ell,j}^{(k)}(x)D_{\ell}D_{j},
	&
	\end{cases}
	\end{align}
\end{linenomath}
where $a_{\ell,j}^{(k)}(x) = D_{k}a_{\ell,j}(x)$.
%
\begin{lemma}\label{L1}
	Let $P^{k}(x,D)$ be as in \eqref{Pk_ud}. Then there is a  positive constant $C$
	such that
	\begin{linenomath}
		\begin{align}\label{BEst_1}
		\sum_{k=1}^{N}\| P^{k} u\|_{0}^{2} \leq C\left( |\langle Pu,u \rangle | +\|u\|_{0}^{2}\right), \qquad\forall u\in C^{\infty}(\mathbb{T}^{N}).
		\end{align}
	\end{linenomath}
\end{lemma}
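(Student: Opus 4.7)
The plan is to identify $P^k u$ as the $k$-th component of the vector-valued function $\mathbf{A}(x)Du$, dominate its $L^2$ norm by the quadratic form $\int\langle \mathbf{A}(x)Du,Du\rangle\,dx$, and then bound that quadratic form by $|\langle Pu,u\rangle|$ modulo zeroth-order corrections.

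First, from \eqref{Pk_ud} one reads $P^k u = 2\bigl(\mathbf{A}(x)Du\bigr)_k$, where $Du=(D_1u,\dots,D_Nu)^{T}$, so
\[
\sum_{k=1}^{N}\|P^ku\|_0^2 \;=\; 4\,\|\mathbf{A}(\cdot)Du\|_0^2 .
\]
Since $\mathbf{A}(x)\ge 0$ one has the elementary pointwise matrix inequality $|\mathbf{A}(x)v|^2\le \|\mathbf{A}(x)\|\,\langle \mathbf{A}(x)v,v\rangle$ for all $v\in\mathbb{C}^{N}$ (just write $\mathbf{A}=(\mathbf{A}^{1/2})^{2}$ and expand), and continuity of $x\mapsto\|\mathbf{A}(x)\|$ on the compact torus gives a uniform bound $\|\mathbf{A}(x)\|\le C_0$. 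Integration over $\mathbb{T}^{N}$ then yields
\[
\sum_{k=1}^{N}\|P^ku\|_0^2 \;\le\; 4C_0 \int_{\mathbb{T}^{N}}\langle \mathbf{A}(x)Du,Du\rangle\,dx .
\]

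Next I would introduce the formally self-adjoint operator $Q=\sum_{\ell,j}D_\ell\,a_{\ell,j}(x)\,D_j$ (self-adjoint because $a_{\ell,j}=a_{j,\ell}$ is real-valued). A single integration by parts on the torus, where there are no boundary terms, rewrites
\[
\langle Qu,u\rangle \;=\; \sum_{\ell,j}\langle a_{\ell,j}D_ju,\,D_\ell u\rangle \;=\; \int_{\mathbb{T}^{N}}\langle \mathbf{A}(x)Du,Du\rangle\,dx ,
\]
so the right-hand side of the preceding display is exactly $4C_0\langle Qu,u\rangle$. On the other hand, commuting $D_\ell$ past $a_{\ell,j}$ expresses $Q$ in terms of the principal part of $P$:
\[
Q \;=\; \sum_{\ell,j} a_{\ell,j}(x)D_\ell D_j \;+\; \sum_{j}\tilde b_j(x)D_j ,\qquad \tilde b_j=\sum_{\ell}D_\ell a_{\ell,j} .
\]
Subtracting the remaining pieces of $P$ and recalling that $\langle Qu,u\rangle$ is real, we obtain
\[
\langle Qu,u\rangle \;=\; \re\langle Pu,u\rangle \;+\; \re\sum_{j}\langle \tilde b_j D_j u,u\rangle \;-\; \re\bigl\langle i\textstyle\sum_\ell b_\ell D_\ell u,u\bigr\rangle \;-\; \re\langle cu,u\rangle .
\]

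Finally, since the functions $\partial_\ell a_{\ell,j}$, $b_\ell$ are real-valued and $c$ is bounded, the identity
\[
\re\int f(x)\,\partial_\ell u\cdot \bar u\,dx \;=\; -\tfrac12\int(\partial_\ell f)\,|u|^2\,dx ,
\]
applied to each of the three lower-order pieces shows that their real parts are all bounded by $C\|u\|_0^2$. Hence $\langle Qu,u\rangle\le |\langle Pu,u\rangle|+C'\|u\|_0^2$, and combining with the earlier inequality gives \eqref{BEst_1}. The one genuinely essential ingredient is the pointwise bound $|\mathbf{A}v|^2\le\|\mathbf{A}\|\langle \mathbf{A}v,v\rangle$, which converts the Euclidean norm of $\mathbf{A}Du$ into the positive-semidefinite quadratic form produced by integrating by parts in $P$; everything else is a standard G\aa rding-type bookkeeping made painless by the absence of boundary on $\mathbb{T}^{N}$.
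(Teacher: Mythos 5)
Your proof is correct and follows essentially the same route as the paper: the crucial pointwise inequality $|\mathbf{A}(x)v|^2 \le \|\mathbf{A}(x)\|\langle \mathbf{A}(x)v,v\rangle$ is exactly the paper's $\langle \mathbf{A}^2 v,v\rangle \le C_0\langle \mathbf{A}v,v\rangle$, and your identification of $\int\langle \mathbf{A}Du,Du\rangle$ with $\operatorname{Re}\langle Pu,u\rangle$ modulo zeroth-order terms is the same computation the paper carries out by adding $\langle Pu,u\rangle$ and $\langle u,Pu\rangle$. Introducing the self-adjoint divergence-form operator $Q$ is a small organizational convenience that makes the reality of the quadratic form automatic, but it does not change the substance of the argument.
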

\begin{proof}
We observe that
\begin{linenomath}
	\begin{multline*}
      \langle Pu, u \rangle 
      = \longsum[5]_{j,\ell =1}^{N} \langle a_{j,\ell} D_{j} D_{\ell} u, u \rangle
      + i \sum_{\ell}^{N} \langle b_{\ell} D_{\ell} u, u \rangle + \langle c u, u\rangle
      \\
      =
      \longsum[5]_{j,\ell =1}^{N} \langle a_{j,\ell} D_{j}  u,  D_{\ell} u \rangle
      - \longsum[5]_{j,\ell =1}^{N} \langle a_{j,\ell}^{(\ell)} D_{j}  u,  u \rangle
      + i \sum_{\ell}^{N} \langle b_{\ell} D_{\ell} u, u \rangle + \langle c u, u\rangle,
	\end{multline*}
\end{linenomath}
and 
\begin{linenomath}
	\begin{multline*}
	\langle u, P u \rangle 
	=
	\longsum[5]_{j,\ell =1}^{N} \langle a_{j,\ell} D_{j}  u,  D_{\ell} u \rangle
	+
	\longsum[5]_{j,\ell =1}^{N} \langle a_{j,\ell}^{(\ell)} D_{j} u, u \rangle
	\\
	+
	\longsum[5]_{j,\ell =1}^{N} \langle a_{j,\ell}^{(j+\ell)}   u,  u \rangle
	- i \sum_{\ell}^{N} \langle b_{\ell} D_{\ell} u, u \rangle 
	- i \sum_{\ell}^{N} \langle b_{\ell}^{(\ell)}  u, u \rangle 
	+ \langle \overline{c} u, u\rangle,
	\end{multline*}
\end{linenomath}
where we used that $a_{j,\ell} = a_{\ell,j}$.
We conclude that
\begin{linenomath}
	\begin{multline}\label{eq:ReP}
	2\text{Re}  \langle Pu, u \rangle 
	= 
	2\longsum[5]_{j,\ell =1}^{N} \langle a_{j,\ell} D_{j}  u,  D_{\ell} u \rangle
	+
	\longsum[5]_{j,\ell =1}^{N} \langle a_{j,\ell}^{(j+\ell)}   u,  u \rangle
	- i \sum_{\ell}^{N}  \langle b_{\ell}^{(\ell)}  u, u \rangle 
	\\
	+ \langle (c+\overline{c}) u, u\rangle.
	\end{multline}
\end{linenomath}
Remarking that $2 \displaystyle\sum_{\ell =1}^{N}\sum_{j=1}^{N} a_{j,\ell} D_{\ell} = \displaystyle\sum_{\ell =1}^{N} P^{\ell}$,
the above identity can be rewritten as
\begin{linenomath}
	\begin{align*}
	\sum_{\ell =1}^{N} \langle P^{\ell} u, D_{\ell} u\rangle
	=
	2\text{Re}  \langle Pu, u \rangle
	-
	\longsum[5]_{j,\ell =1}^{N} \langle a_{j,\ell}^{(j+\ell)}   u,  u \rangle
	+ i \sum_{\ell}^{N} \langle b_{\ell}^{(\ell)}  u, u \rangle 
	- \langle (c+\overline{c}) u, u\rangle.
	\end{align*}
\end{linenomath}
We obtain
\begin{linenomath}
	\begin{align}\label{Est_0}
	\bigg|\sum_{\ell =1}^{N} \langle P^{\ell} u, D_{\ell} u\rangle \bigg|
	\leq 
	2
	|\text{Re}  \langle Pu, u \rangle| + \tilde{C} \|u \|^{2}_{0}
	\leq 
	C_{1}
	\left( | \langle Pu, u \rangle| + \|u \|_{0}^{2} \right).
	\end{align}
\end{linenomath}
On the other hand we have
\begin{linenomath}
	\begin{multline*}
	\sum_{k =1}^{N} \| P^{k} u\|^{2}
	=
	4 \sum_{k =1}^{N} \bigg\langle \sum_{\ell =1}^{N} a_{\ell,k} D_{\ell} u, \sum_{j=1}^{N} a_{j,k} D_{j} u \bigg\rangle
    \\
	=
	4 \sum_{\ell =1}^{N} \sum_{j=1}^{N} \bigg\langle \sum_{k =1}^{N} a_{j,k} a_{\ell,k} D_{\ell} u,  D_{j} u\bigg\rangle
	= 4 \sum_{\ell =1}^{N} \sum_{j=1}^{N} \bigg\langle \left(\sum_{k =1}^{N} a_{j,k} a_{k,\ell}\right) D_{\ell} u,  D_{j} u\bigg\rangle,
	\end{multline*}
\end{linenomath}
where we use that $ a_{\ell,k} = a_{k,\ell}$.
$\sum_{k =1}^{N} a_{j,k} a_{k,\ell}$ are the entries of the matrix $\mathbf{A}^{2}(x)$. 
Since $\mathbf{A}(x)$ is positive semidefinite, 
there is a suitable positive constant, $C_{0}$, such that
$\langle \mathbf{A}^{2} v, v \rangle \leq C_{0} \langle \mathbf{A} v, v \rangle $.
We obtain 
\begin{linenomath}
	\begin{align*}
	\sum_{k =1}^{N} \| P^{k} u\|^{2}
	\leq 4 C_{0} \longsum[5]_{j,\ell =1}^{N}  \langle a_{j,\ell} D_{j} u,  D_{\ell} u\rangle
	=
	4 C_{0} \sum_{\ell =1}^{N} \langle P^{\ell} u, D_{\ell} u\rangle.
	\end{align*}
\end{linenomath}
Summing up, by the above estimate and \eqref{Est_0}, we conclude that there is a positive constant, $C$,
such that
\begin{linenomath}
	\begin{align*}
	\sum_{k =1}^{N} \| P^{k} u\|^{2}
	\leq
	C\left( | \langle Pu, u \rangle| + \|u \|_{0}^{2} \right),
	\end{align*}
\end{linenomath}
i.e. \eqref{BEst_1}.
\end{proof}
For our purpose we recall the following result due to O.A.~Ole\u \i nik and  E.V.~Radkevi\v c:
%
\begin{lemma}[\cite{OR1971}]\label{OR_L}
	Let $\langle \mathbf{A}(x) \xi, \xi\rangle = \displaystyle\longsum[5]_{k,j=1}^{N} a_{k,j}(x) \xi_{k}\xi_{j} \geq 0$
	for all $x \in \mathbb{R}^{N}$ and all $\xi =(\xi_{1},\, \dots, \xi_{N})$, suppose that $a_{k,j}(x) \in \mathscr{B}^{2}(\mathbb{R}^{N})$,
	the set of $C^{2}$ functions bounded in $\mathbb{R}^{N}$;
	then for any $v \in C^{2}(\mathbb{R}^{N})$ and $\rho \in \{1, \dots, N \}$ the following estimate holds
	\begin{linenomath}
		\begin{align}\label{Matrix_Est}
		\left| 
		\longsum[6]_{k,j=1}^{N} \frac{\partial a_{k,j}}{\partial x_{\rho}} (x) \frac{\partial^{2} v}{\partial x_{k} \partial x_{j}}(x)\right|^{2}
		\leq
		M \longsum[9]_{k,j,s=1}^{N} a_{k,j} (x)   \frac{\partial^{2} v}{\partial x_{k} \partial x_{s}} (x)
		\overline{\frac{\partial^{2} v}{\partial x_{j} \partial x_{s}} (x)}. 
		\end{align}
	\end{linenomath}
\end{lemma}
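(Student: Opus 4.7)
The plan is to reduce the matrix inequality \eqref{Matrix_Est} to the classical scalar Glaeser inequality via a spectral decomposition of the Hessian of $v$. The only analytic input is the following one-dimensional fact: if $g\in C^{2}(\mathbb{R}^{N})$ is non-negative with uniformly bounded second derivatives, then
\begin{equation*}
|\partial_{\rho}g(x)|^{2}\leq 2\,\|\nabla^{2}g\|_{\infty}\,g(x),
\end{equation*}
which follows from a one-line discriminant argument applied to the non-negative second-order Taylor polynomial of $t\mapsto g(x+te_{\rho})$.

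I would apply this to $g_{\xi}(x):=\sum_{k,j}a_{k,j}(x)\xi_{k}\xi_{j}$, which is non-negative for each fixed $\xi\in\mathbb{R}^{N}$ by the hypothesis $\mathbf{A}(x)\geq 0$. Since $\|\nabla^{2}g_{\xi}\|_{\infty}\leq C_{0}|\xi|^{2}$, with $C_{0}$ controlled by the $\mathscr{B}^{2}$-bounds of the $a_{k,j}$, this yields the pointwise quadratic-form Glaeser estimate
\begin{equation*}
|\xi^{T}\mathbf{B}(x)\xi|^{2}\leq C\,\xi^{T}\mathbf{A}(x)\xi \qquad \text{for every unit vector }\xi,
\end{equation*}
where $\mathbf{B}(x):=\partial_{\rho}\mathbf{A}(x)$.

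Next, treating first the case in which $v$ is real-valued, the Hessian $\mathbf{H}(x)=\bigl(\partial_{k}\partial_{j}v(x)\bigr)$ is real symmetric, so I would diagonalize $\mathbf{H}=\mathbf{Q}\Lambda\mathbf{Q}^{T}$ with $\mathbf{Q}$ orthogonal and $\Lambda=\operatorname{diag}(\lambda_{1},\dots,\lambda_{N})$. Writing $q_{s}$ for the columns of $\mathbf{Q}$, the left-hand side of \eqref{Matrix_Est} is
\begin{equation*}
\operatorname{tr}(\mathbf{B}\mathbf{H})=\sum_{s=1}^{N}\lambda_{s}\,q_{s}^{T}\mathbf{B}q_{s},
\end{equation*}
and the quadratic-form Glaeser estimate combined with Cauchy--Schwarz in $s$ gives
\begin{equation*}
|\operatorname{tr}(\mathbf{B}\mathbf{H})|^{2}\leq\Bigl(\sum_{s}|\lambda_{s}|\sqrt{C\,q_{s}^{T}\mathbf{A}q_{s}}\Bigr)^{2}\leq NC\sum_{s}\lambda_{s}^{2}\,q_{s}^{T}\mathbf{A}q_{s}=NC\operatorname{tr}(\mathbf{A}\mathbf{H}^{2}),
\end{equation*}
which coincides with the right-hand side of \eqref{Matrix_Est} with $M=NC$, since $\mathbf{H}^{2}=\mathbf{H}\mathbf{H}^{*}$ when $\mathbf{H}$ is real symmetric.

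For complex-valued $v$ I would split $\mathbf{H}=\mathbf{H}_{R}+i\mathbf{H}_{I}$ with $\mathbf{H}_{R},\mathbf{H}_{I}$ real symmetric, apply the real case separately to each piece, and observe that $\operatorname{tr}(\mathbf{A}[\mathbf{H}_{R},\mathbf{H}_{I}])=0$ follows from the symmetry of $\mathbf{A},\mathbf{H}_{R},\mathbf{H}_{I}$ together with the cyclicity of the trace; this kills the imaginary cross-term of $\mathbf{H}\mathbf{H}^{*}$ and yields $\operatorname{tr}(\mathbf{A}\mathbf{H}_{R}^{2})+\operatorname{tr}(\mathbf{A}\mathbf{H}_{I}^{2})=\operatorname{tr}(\mathbf{A}\mathbf{H}\mathbf{H}^{*})$, closing the argument. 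The conceptual heart of the proof is the scalar Glaeser inequality, which crucially exploits the \emph{global} non-negativity of $\mathbf{A}$; beyond that, I anticipate only careful bookkeeping of the dimensional constants and no genuine obstacle.
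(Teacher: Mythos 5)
The paper cites this lemma to \cite{OR1971} and supplies no proof of its own, so there is no in-paper argument to compare against; what you have done is fill a gap the paper delegated to the reference. Your proof is correct and complete. The scalar Glaeser (discriminant) inequality applied to $g_\xi(x)=\xi^{T}\mathbf{A}(x)\xi$ at each fixed $\xi$ does yield the uniform quadratic-form bound $|\xi^{T}\partial_\rho\mathbf{A}(x)\xi|^{2}\leq C\,\xi^{T}\mathbf{A}(x)\xi$ for unit $\xi$, because the $\mathscr{B}^{2}$ hypothesis controls the directional second derivative $\partial_\rho^{2}g_\xi$ by a multiple of $|\xi|^{2}$ uniformly in $x$; diagonalizing the real symmetric Hessian turns $\operatorname{tr}(\mathbf{B}\mathbf{H})$ into the eigenbasis sum $\sum_{s}\lambda_{s}\,q_{s}^{T}\mathbf{B}q_{s}$, and Cauchy--Schwarz converts this into $N C\sum_{s}\lambda_{s}^{2}\,q_{s}^{T}\mathbf{A}q_{s}=NC\operatorname{tr}(\mathbf{A}\mathbf{H}^{2})$, which is exactly the right-hand side for real $v$. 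For complex $v$ the identity $\operatorname{tr}(\mathbf{A}[\mathbf{H}_{R},\mathbf{H}_{I}])=0$ (symmetry of all three factors plus cyclicity of the trace) gives $\operatorname{tr}(\mathbf{A}\mathbf{H}\mathbf{H}^{*})=\operatorname{tr}(\mathbf{A}\mathbf{H}_{R}^{2})+\operatorname{tr}(\mathbf{A}\mathbf{H}_{I}^{2})$, closing the argument. This is, in substance, the classical Ole\u{\i}nik--Radkevi\v{c} argument, and the constant $M=NC$ your proof produces (with $C$ controlled by $\sup_{x,\rho,k,j}|\partial_\rho^{2}a_{k,j}(x)|$) is of the expected form.
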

Taking advantage from the above Lemma we obtain
\begin{lemma}\label{L2}
	Let $P_{k}(x,D)$ be as in \eqref{Pk_ud}. Then there is $C$, positive constant,
	such that
	\begin{linenomath}
		\begin{align}\label{Est_P_k}
		\sum_{k=1}^{N} \| P_{k} u \|_{-1}^{2} 
		\leq
		C \left( \sum_{s=1}^{N} | \langle D_{s} \Lambda^{-1} P  u , D_{s} \Lambda^{-1} u \rangle | + \|u\|_{0}^{2}\right), \quad\forall u\in C^{\infty}(\mathbb{T}^{N}),
		\end{align}
	\end{linenomath}
where $\Lambda^{-1}$ is the pseudodifferential operator of order $-1$ with associated symbol $\lambda(\xi) = (1+ |\xi|^{2})^{-1/2}$. 
\end{lemma}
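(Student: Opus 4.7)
The plan is to combine three ingredients: the Ole\u{\i}nik--Radkevi\v{c} pointwise estimate from Lemma \ref{OR_L}, the real-part identity \eqref{eq:ReP} derived inside the proof of Lemma \ref{L1}, and elementary pseudodifferential calculus for the operators $T_s := D_s \Lambda^{-1}$. These have order zero and satisfy $T_s^{\ast}=T_s$, since their symbol $\xi_s(1+|\xi|^2)^{-1/2}$ depends only on $\xi$ and is real.

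First I would reduce the $H^{-1}$-norm of $P_k u$ to an $L^2$-norm. Because $[\Lambda^{-1}, P_k]$ is pseudodifferential of order $0$ and hence bounded on $L^2$, one has $\|P_k u\|_{-1}^{2}=\|\Lambda^{-1} P_k u\|_0^{2} \leq 2\|P_k \Lambda^{-1} u\|_0^{2}+C\|u\|_0^{2}$. Applying Lemma \ref{OR_L} pointwise with $\rho=k$ to $v=\Lambda^{-1}u$, integrating over $\mathbb{T}^N$, and summing in $k$ yields
\begin{linenomath}
	\begin{equation*}
	\sum_{k=1}^{N}\|P_k \Lambda^{-1} u\|_0^{2} \leq N M \sum_{s=1}^{N}\sum_{l,j=1}^{N}\langle a_{l,j}\, D_l T_s u, D_j T_s u\rangle.
	\end{equation*}
\end{linenomath}

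For each fixed $s$, applying \eqref{eq:ReP} to $T_s u$ in place of $u$ and absorbing the resulting zeroth-order remainders into $C\|u\|_0^{2}$ (using that $T_s$ is bounded on $L^2$) gives
\begin{linenomath}
	\begin{equation*}
	\sum_{l,j=1}^{N}\langle a_{l,j} D_l T_s u, D_j T_s u\rangle \leq \text{Re}\langle P T_s u, T_s u\rangle + C\|u\|_0^{2}.
	\end{equation*}
\end{linenomath}
Writing $P T_s = T_s P + [P, T_s]$ splits the right hand side into the desired term $\langle T_s Pu, T_s u\rangle=\langle D_s \Lambda^{-1} P u, D_s \Lambda^{-1} u\rangle$ plus a commutator error $\langle [P, T_s] u, T_s u\rangle$.

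The main obstacle is this commutator term, since each $[P, T_s]$ has order $1$ and so, taken individually, is not controlled by $\|u\|_0^{2}$. The fix is to sum over $s$ before estimating and exploit the cancellation coming from $\sum_{s=1}^{N}T_s^{2}=I-\Lambda^{-2}$ (whose symbol is $|\xi|^2/(1+|\xi|^2)$). Using $T_s^{\ast}=T_s$, one rewrites $\sum_s \langle [P, T_s] u, T_s u\rangle = \langle (\sum_s T_s [P, T_s])\, u, u\rangle$, and the symmetrization $T_s P T_s = \tfrac{1}{2}(T_s^{2} P + P T_s^{2}) + \tfrac{1}{2}[T_s, [P, T_s]]$ combined with the preceding identity produces
\begin{linenomath}
	\begin{equation*}
	\sum_{s=1}^{N} T_s [P, T_s] = \tfrac{1}{2}[\Lambda^{-2}, P] + \tfrac{1}{2}\sum_{s=1}^{N}[T_s, [P, T_s]].
	\end{equation*}
\end{linenomath}
The first summand has order $-1$ (since $\Lambda^{-2}$ has order $-2$ and $P$ has order $2$), and each double commutator $[T_s, [P, T_s]]$ has order $0$ (because $[P, T_s]$ has order $1$), so $\sum_{s}T_s[P, T_s]$ is bounded on $L^2$. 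This yields $|\sum_s \langle [P, T_s] u, T_s u\rangle| \leq C\|u\|_0^{2}$, and assembling the preceding estimates gives \eqref{Est_P_k}.
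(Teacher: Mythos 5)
Your argument is correct and reaches \eqref{Est_P_k}, but it diverges from the paper's proof at the crucial commutator step. Both proofs share the same scaffolding: reduce $\|P_k u\|_{-1}^2$ to $\|P_k\Lambda^{-1}u\|_0^2$ modulo $\|u\|_0^2$, invoke Lemma \ref{OR_L} on $\Lambda^{-1}u$, convert the resulting quadratic form via \eqref{eq:ReP} applied to $D_s\Lambda^{-1}u$, and then commute $P$ past $D_s\Lambda^{-1}$. The two proofs part ways in how they dispose of the error $\sum_{s}\langle [P, D_s\Lambda^{-1}] u, D_s\Lambda^{-1} u\rangle$. The paper identifies the principal symbol of $[P, D_s\Lambda^{-1}]$ as $\sum_{k}\big[D_{\xi_k}\sigma(D_s\Lambda^{-1})\big]\,P_k(x,\xi)$ modulo order zero, deduces the bound $C_1\big(\sum_k\|P_k u\|_{-1}\big)\|u\|_0 + C_2\|u\|_0^2$, and then runs a small-$\delta$ absorption to reabsorb $\delta N\sum_k\|P_k u\|_{-1}^2$ on the left. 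You instead exploit that the $T_s := D_s\Lambda^{-1}$ are commuting, self-adjoint Fourier multipliers with $\sum_s T_s^2 = I - \Lambda^{-2}$, and the correct algebraic identity
\begin{equation*}
\sum_{s=1}^{N} T_s[P, T_s] \;=\; \tfrac{1}{2}\big[\Lambda^{-2}, P\big] \;+\; \tfrac{1}{2}\sum_{s=1}^{N}\big[T_s,[P, T_s]\big],
\end{equation*}
in which the first summand has order $-1$ and each double commutator has order $0$; hence $\sum_s T_s[P, T_s]$ is $L^2$-bounded and the whole error is $\leq C\|u\|_0^2$ outright, with no absorption needed.

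Both proofs are valid; each buys something. Your route is cleaner and exposes a genuine cancellation -- the summed commutator error is of order $\leq 0$, a stronger fact than the paper records, and you sidestep the $\delta$-absorption entirely. On the other hand, it leans on the very particular structure of the operators $T_s$ (self-adjoint Fourier multipliers whose squares sum to $I - \Lambda^{-2}$), whereas the paper's symbol-level identification of $[P, D_s\Lambda^{-1}]$ with the $P_k$'s, followed by absorption, is more robust and would transfer to other microlocalizing families $E_s$ of the kind appearing in Proposition \ref{P1} and Theorem \ref{Th_BEst}. Minor point in your favor: you correctly retain the factor $2$ from the elementary inequality $\|a+b\|^2 \leq 2\|a\|^2 + 2\|b\|^2$ in the first reduction step (the paper drops it, harmlessly since it only affects the constant).
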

\begin{proof}
Let $s \in \mathbb{R}$, we set $\Lambda^{s} $ the pseudodifferential operator associated
to the symbol $\lambda(\xi) = \left(1+|\xi|^{2}\right)^{s/2}$, $\xi \in \mathbb{Z}^{n}$,
we have
\begin{linenomath}
	\begin{equation*}
	\sum_{k=1}^{N} \| P_{k} u \|_{-1}^{2} = \sum_{k=1}^{N} \| \Lambda^{-1} P_{k} u \|^{2}
	\leq \sum_{k=1}^{N} \| P_{k} \Lambda^{-1} u \|^{2} + \sum_{k=1}^{N} \| [ \Lambda^{-1}, P_{k}] u \|^{2}.
	\end{equation*}
\end{linenomath}
Since $[ \Lambda^{-1}, P_{k}] = \displaystyle\longsum[4]_{\ell,j =1}^{N}[ \Lambda^{-1}, a_{\ell,j}^{(k)}(x)] D_{\ell}D_{j}$ is a zero order
pseudodifferential operator we obtain
\begin{linenomath}
	\begin{equation*}
	\sum_{k=1}^{N} \| P_{k} u \|_{-1}^{2} = \sum_{k=1}^{N} \| \Lambda^{-1} P_{k} u \|^{2}
	\leq \sum_{k=1}^{N} \| P_{k} \Lambda^{-1} u \|^{2} + C \|u \|^{2},
	\end{equation*}
\end{linenomath}
where $C$ is a suitable positive constant. We set $\widetilde{u}= \Lambda^{-1} u$. By the Lemma \ref{OR_L}
and \eqref{eq:ReP}, we have
\begin{linenomath}
	\begin{multline*}
	 \sum_{k=1}^{N} \| P_{k} \widetilde{u} \|^{2} 
	 = \sum_{k =1}^{N}\int  \Big| \longsum[6]_{j,\ell =1}^{N}  \frac{\partial a_{j,\ell}}{\partial x_{k}}(x) D_{j} D_{\ell} \widetilde{u} \Big|^{2} dx
	\\
	 \leq 
	 MN \int \longsum[8]_{s,j,\ell=1}^{N} a_{j,\ell}(x) \left( D_{j}D_{s} \widetilde{u}\right) \overline{\left( D_{\ell}D_{s} \widetilde{u}\right)} dx
	 \\
	 =
	 MN 
	 \left( 
	 \text{Re}  \sum_{s=1}^{N}\langle P D_{s} \widetilde{u} , D_{s} \widetilde{u} \rangle 
     -\frac{1}{2} \longsum[7]_{s, j,\ell =1}^{N} \langle a_{j,\ell}^{(j+\ell)}   D_{s} \widetilde{u},  D_{s} \widetilde{u} \rangle
     +\frac{1}{2} \longsum[5]_{s,\ell =1}^{N} \langle b_{\ell}^{(\ell)}  D_{s} \widetilde{u}, D_{s} \widetilde{u} \rangle 
     \right.
     \\
     \left.
     - \sum_{s=1}^{N}\langle (c+\overline{c}) D_{s} \widetilde{u}, D_{s} \widetilde{u}\rangle
     \right)
	 .
	\end{multline*}
\end{linenomath}
Since $D_{s} \Lambda^{-1}$ is a zero order pseudodifferential operator, we conclude that there is a suitable
positive constant $C$ such that 
\begin{linenomath}
	\begin{multline}\label{Est:P_k-1}
	\sum_{k=1}^{N} \| P_{k} u \|_{-1}^{2} 
	\leq
	C \left( \sum_{s=1}^{N}  | \langle P D_{s} \Lambda^{-1} u , D_{s} \Lambda^{-1} u \rangle | + \|u\|_{0}^{2}\right)
	\\
	\leq
	C \left( \sum_{s=1}^{N} | \langle D_{s} \Lambda^{-1} P  u , D_{s} \Lambda^{-1} u \rangle | 
	+ \sum_{s=1}^{N} | \langle [ P, D_{s} \Lambda^{-1}] u , D_{s} \Lambda^{-1} u \rangle
	+ \|u\|_{0}^{2} 
	\right)
	.
	\end{multline}
\end{linenomath}
The symbol associated to the operator $ [ P, D_{s} \Lambda^{-1}]$ is of the form
$ \sigma\left( [P, D_{s} \Lambda^{-1}] \right)  = 
\sum_{k=1}^{N} \left[ D_{\xi_{k}} \sigma\left( D_{s} \Lambda^{-1}\right)\right]  \left[ D_{x_{k}} 
\sigma\left( P \right)\right]$ modulo a zero
order symbol. We point out that $ D_{x_{k}}  \sigma\left( P \right) = P_{k}(x,\xi)$. So
\begin{linenomath}
	\begin{multline*}
  \sum_{s=1}^{N} | \langle    [ P, D_{s} \Lambda^{-1} ]  u , D_{s} \Lambda^{-1} u \rangle |
  \leq
  C_{1}\left( \sum_{k=1}^{N} \|   P_{k}  u\|_{-1} \right) \| u \|_{0} + C_{2}\|u\|_{0}^{2}
  \\
   \leq
   \delta N\sum_{k=1}^{N} \|  P_{k}  u\|_{-1}^{2} + C_{\delta}  \|u\|_{0}^{2},
	\end{multline*}
\end{linenomath}
where $\delta$ is a suitable small positive number such that $C\delta \leq 1$, $C$ is the constant
appearing in (\ref{Est:P_k-1}). Such choice allow us to absorb the first term on the right hand side of (\ref{Est:P_k-1}).\\
Summing up we obtain
\begin{linenomath}
	\begin{equation*}
	\sum_{k=1}^{N} \| P_{k} u \|_{-1}^{2} 
	\leq
	\widetilde{C} \left( \sum_{s=1}^{N} | \langle D_{s} \Lambda^{-1} P  u , D_{s} \Lambda^{-1} u \rangle | + \|u\|_{0}^{2}\right)
	.
	\end{equation*}
\end{linenomath}
i.e. (\ref{Est_P_k}).
\end{proof}
By the Lemma \ref{L1} and \ref{L2}, we obtain
\begin{proposition}\label{P1}
	Let $P(x,D)$ be as in \eqref{HOR_Op}. Let $P^{k}(x,D)$ and $P_{k}(x,D)$ be as in \eqref{Pk_ud}. Then there is a positive constant $C$, 
	such that
	\begin{linenomath}
		\begin{align}\label{BEst_3}
		\sum_{k=1}^{N}\| P^{k} u\|_{0}^{2} + \! \sum_{k=1}^{N}\| P_{k} u\|_{-1}^{2}
		\leq C\left(\sum_{s=0}^{N} |\langle E_{s} Pu, E_{s} u \rangle | +\|u\|_{0}^{2}\right), \,\, \forall u\in C^{\infty}(\mathbb{T}^{N}),
		\end{align}
	\end{linenomath}
where $E_{0}=1$ and $E_{s}= D_{s}\Lambda^{-1}$ for $s=1,2,\dots,N$.
\end{proposition}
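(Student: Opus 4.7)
The plan is to obtain Proposition \ref{P1} as an immediate consequence of Lemmas \ref{L1} and \ref{L2}, simply by bundling them under the unified notation $E_{s}$. The key observation is that the right-hand side decomposes neatly: the $s=0$ summand $|\langle E_{0} Pu, E_{0} u\rangle|$ is exactly $|\langle Pu, u\rangle|$ since $E_{0}=1$, which is the quantity appearing in Lemma \ref{L1}; and the summands for $s=1,\dots,N$ are $|\langle D_{s}\Lambda^{-1} Pu, D_{s}\Lambda^{-1} u\rangle|$, which is precisely the quantity appearing in Lemma \ref{L2}.

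First I would apply Lemma \ref{L1} to bound
\begin{equation*}
\sum_{k=1}^{N}\| P^{k} u\|_{0}^{2} \leq C_{1}\left( |\langle E_{0} Pu, E_{0} u\rangle| + \|u\|_{0}^{2}\right),
\end{equation*}
using $E_{0}=1$. Next I would apply Lemma \ref{L2} to bound
\begin{equation*}
\sum_{k=1}^{N}\| P_{k} u\|_{-1}^{2} \leq C_{2}\left( \sum_{s=1}^{N} |\langle E_{s} Pu, E_{s} u\rangle| + \|u\|_{0}^{2}\right).
\end{equation*}
Adding these two inequalities and setting $C = \max\{C_{1}, C_{2}\}$ (or $C_{1}+C_{2}$) yields \eqref{BEst_3}.

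There is no substantial obstacle here; the content of the proposition is entirely carried by the two preceding lemmas, and the statement is really a repackaging that will make the later arguments cleaner by displaying on the right-hand side a uniform family of pairings $\langle E_{s}Pu, E_{s}u\rangle$. The only very small bookkeeping point is the combination of constants, and the fact that density of $C^{\infty}(\mathbb{T}^{N})$ is already built into the hypothesis of both lemmas, so the inequality passes through the sum without any additional limiting argument.
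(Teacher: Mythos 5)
Your proof is correct and takes exactly the approach the paper intends: the paper states the proposition as an immediate corollary of Lemmas \ref{L1} and \ref{L2}, and your decomposition of the right-hand side into the $s=0$ term (covered by Lemma \ref{L1}) and the $s=1,\dots,N$ terms (covered by Lemma \ref{L2}), followed by addition and combining constants, is precisely that argument made explicit.
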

\begin{remark}
	The following inequality holds
	\begin{linenomath}
		\begin{align}\label{BEst_3_!}
		\sum_{k=1}^{N}\| P^{k} u\|_{0}^{2} + \sum_{k=1}^{N}\| P_{k} u\|_{-1}^{2}
		\leq C_{1}\left( \| Pu \|^{2}_{0} +\|u\|_{0}^{2}\right), \quad\forall u\in C^{\infty}(\mathbb{T}^{N}).
		\end{align}
	\end{linenomath}
\end{remark}
We give now the  global
\textit{H\"ormander-Ole\u{\i}nik-Radkevi\v c condition}.
For more details on the local
\textit{H\"ormander-Ole\u{\i}nik-Radkevi\v c condition} see \cite{Rad2009} and \cite{D_2020}.\\
To the operator \eqref{HOR_Op} we associate the family of operators $\mathcal{Q}=\left\{ Q_{1}, \dots, Q_{2N} \right\} $
where
\begin{linenomath}
	\begin{equation}\label{Eq:Qj}
	Q_{j}=P_{j}  \text{ for } j=1,\dots, N \text{ and } Q_{j}= \Lambda^{-1}P^{j-N}, \, j=N+ 1,\dots, 2N.
	\end{equation}
\end{linenomath}
Let $I= \left( i_{1}, \dots, i_{k} \right)$ be a multi-index where $i_{\ell} \in \left\{1,\dots, 2N\right\}$, $\ell= 1,\dots,k$.
We set $|I|=k$. Given a multi-index $I$, we associate the operator  
\begin{linenomath}
	\begin{align}
	&Q_{I}= Q_{i_{1}}  \qquad \text{if } I= i_{1},
	\\
	\nonumber
	&
	Q_{I}= [Q_{i_{1}},[Q_{i_{2}},[\cdots[Q_{i_{k-1}}, Q_{i_{k}}]\cdots] \qquad \text{if } I=\left( i_{1}, \dots, i_{k} \right).
	\end{align}
\end{linenomath}
\begin{definition}[\textbf{(H.O.R.)-condition}]\label{HOR-c}
	\hspace{20em}\\
	Let $P(x,D)$ be as in \eqref{HOR_Op}. Let $\left\{ Q_{1}, \dots, Q_{2N} \right\}$ be the family of operators associated with $P$,
	\eqref{Eq:Qj}. We say that $P(x,D)$ satisfies the  \textit{H\"ormander-Ole\u{\i}nik-Radkevi\v c} condition, (H.O.R.)-condition for shortness,
	at the point $(x_{0},\xi_{0}) \in \mathbb{T}^{N}\times \mathbb{Z}^{N}\setminus \{0\}$ if there exists $I = (i_{1}, \dots, i_{r})$ such that
	the symbol associated with the operator $Q_{I}$, $\sigma\left(Q_{I}\right)$, is elliptic at $(x_{0},\xi_{0})$, 
	i.e. $\sigma\left(Q_{I}\right)(x_{0},\xi_{0}) \neq 0$.\\
	Let $x_{0} \in \mathbb{T}^{N}$, we say that $P(x,D)$ satisfied the (H.O.R.)-condition at $x_{0}$ if, for every $\xi \in \mathbb{Z}^{N}\setminus \{0\}$,
	the (H.O.R.)-condition is satisfied at $(x_{0},\xi)$. We say that $P(x,D)$ satisfies the (H.O.R.)-condition globally if the (H.O.R.)-condition
	is satisfied for every $x\in \mathbb{T}^{N}$.
\end{definition}
%
%
%
We have
\begin{theorem}\label{Th_BEst}
	Let $P(x,D)$ be as in (\ref{HOR_Op}). Assume that $P(x,D)$ satisfies the (H.O.R)-condition. 
	Then there are $\varepsilon> 0$ and a positive constant $C$ such that 
	\begin{linenomath}
		\begin{align}\label{eq:B-Est}
		\|u \|_{\varepsilon}^{2}+ \sum_{k=1}^{N} \left( \| P^{k} u\|^{2}_{0} + \|P_{k}u\|_{-1}^{2}\right)
		\leq C \left( \sum_{m=0}^{N}\langle E_{m}P u, E_{m}u\rangle + \|u\|_{0}^{2}\right), 
		\end{align} 
	\end{linenomath}
for every $u \in C^{\infty}(\mathbb{T}^{N})$; $\|\cdot\|_{s}$ denotes the Sobolev norm of order $s$, 
 $ P^{k} $ and $ P_{k} $ as in \eqref{Pk_ud}, $E_{0}=1$ and $E_{m}=D_{m} \Lambda_{-1}$, $m=1,\dots, N$.
\end{theorem}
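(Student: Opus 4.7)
The plan starts from Proposition \ref{P1}, which already supplies the two sums on the left-hand side of \eqref{eq:B-Est}; the new content is only the $\|u\|_\varepsilon^2$ gain, and this is where the (H.O.R.)-condition must enter. I would introduce the shorthand
\begin{linenomath}
\begin{equation*}
\mathcal{N}(u)^2 = \sum_{m=0}^{N} |\langle E_m Pu, E_m u\rangle | + \|u\|_0^2,
\end{equation*}
\end{linenomath}
so that Proposition \ref{P1} reads $\sum_{j=1}^{2N} \|Q_j u\|_{\sigma_j}^2 \leq C\,\mathcal{N}(u)^2$, with $\sigma_j = -1$ for $j \leq N$ (the second-order $P_k$'s) and $\sigma_j = 0$ for $j > N$ (the zeroth-order $\Lambda^{-1} P^{k}$'s, recalling \eqref{BEst_3}). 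The goal is then to upgrade this into a genuine subelliptic estimate via iterated commutators of the $Q_j$'s.

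The first technical step is a propagation-by-commutators lemma. Writing $[Q_{i_1}, Q_J] u = Q_{i_1}(Q_J u) - Q_J(Q_{i_1} u)$ and using the standard pseudodifferential calculus applied to the Fourier series \eqref{fourier_c}, one proves by induction on $|I|$ that for every admissible multi-index $I$ there exist $\nu_{|I|} \geq 0$ and $C_{|I|} > 0$ such that
\begin{linenomath}
\begin{equation*}
\|Q_I u\|_{-\nu_{|I|}}^2 \leq C_{|I|} \,\mathcal{N}(u)^2, \qquad \forall\, u \in C^\infty(\mathbb{T}^N).
\end{equation*}
\end{linenomath}
The bookkeeping is standard: each additional commutation costs at most one negative derivative, while the $\Lambda^{-1}$ factors in the first-order-type $Q_j$'s partially compensate the order loss arising from the second-order-type $Q_j$'s.

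The (H.O.R.)-condition is then exploited microlocally. By homogeneity in $\xi$ and by compactness of $\mathbb{T}^N \times S^{N-1}$, the pointwise ellipticity promised by Definition \ref{HOR-c} can be upgraded to a finite, conic open cover $\{\Gamma_\alpha\}_{\alpha=1}^M$ of $T^*\mathbb{T}^N \setminus 0$ and a corresponding finite family of multi-indices $I_1, \dots, I_M$ of uniformly bounded length such that each $\sigma(Q_{I_\alpha})$ is elliptic of a fixed order on $\Gamma_\alpha$. Subordinating a microlocal partition of unity $1 = \sum_\alpha \chi_\alpha(x,D)$ to this cover and inverting each $Q_{I_\alpha}$ on $\Gamma_\alpha$ via a parametrix (equivalently, applying the sharp G\r{a}rding inequality microlocally), one gets
\begin{linenomath}
\begin{equation*}
\|\chi_\alpha(x,D) u\|_\varepsilon^2 \leq C\bigl( \|Q_{I_\alpha} u\|_{-\nu_{|I_\alpha|}}^2 + \|u\|_0^2 \bigr),
\end{equation*}
\end{linenomath}
for a small but uniform $\varepsilon > 0$ dictated by the maximal commutator length and the maximal order loss. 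Summing over $\alpha$ and invoking the previous commutator estimates yields $\|u\|_\varepsilon^2 \leq C\,\mathcal{N}(u)^2$, which combined with Proposition \ref{P1} produces \eqref{eq:B-Est}.

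The main obstacle I anticipate is the uniformity of $\varepsilon$: one must verify simultaneously that (i) only finitely many commutators of bounded length are needed to cover every direction $(x,\xi)$, which requires genuine compactness of the cosphere bundle together with the fact that the $Q_{I}$'s have classical symbols, and (ii) the loss $\nu_{|I|}$ produced by the inductive commutator scheme does not depend on the particular $u$. Once these two pieces of bookkeeping are settled, the microlocal inversion is routine.
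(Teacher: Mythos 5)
Your high-level decomposition—Proposition \ref{P1} supplies the two $P^k$/$P_k$ sums, and the only new content of Theorem \ref{Th_BEst} is the $\|u\|_\varepsilon^2$ gain coming from the (H.O.R.)-condition—is exactly the structure of the paper's proof, so you are on the right track. Where you diverge is in how that subelliptic gain is actually produced. The paper does not re-derive a Hörmander--Kohn-type estimate from scratch. It chooses a finite partition of unity $\sum_j \psi_j = 1$ subordinate to a chart covering of $\mathbb{T}^N$, applies Proposition~1.5 of Bolley--Camus--Nourrigat \cite{BCN-82} to each $\psi_j u$ as a black box to get $\|u_j\|_\varepsilon \lesssim \sum_\ell \|Q_\ell u_j\|_0 + \|u_j\|_{\sigma_j}$, estimates the commutators $[Q_\ell, \psi_j]$, absorbs the $\|u\|_{\sigma_j}$ terms via interpolation, and then concludes with Proposition~\ref{P1}. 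The entire machinery you sketch—iterated bracket propagation, microlocal cover of the cosphere bundle, parametrix inversion—is precisely what is packaged inside the Bolley--Camus--Nourrigat result that the paper simply cites.

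The genuine gap in your sketch is the ``propagation-by-commutators lemma.'' As stated, you write $[Q_{i_1}, Q_J]u = Q_{i_1}(Q_J u) - Q_J(Q_{i_1} u)$ and suggest the induction goes through by pseudodifferential bookkeeping. But the two products $Q_{i_1}Q_J u$ and $Q_J Q_{i_1} u$ are \emph{not} individually controlled by $\mathcal{N}(u)$—only their difference is, and only at a lower Sobolev level. The actual mechanism (Kohn's argument, or Hörmander's original one, or the proof of the BCN proposition) is a quadratic-form computation of the type $\|[Q_{i_1}, Q_J]u\|_{-\nu}^2 = \langle Q_{i_1} Q_J u, \Lambda^{-2\nu}[Q_{i_1},Q_J]u\rangle - \langle Q_J Q_{i_1} u, \Lambda^{-2\nu}[Q_{i_1},Q_J]u\rangle$, followed by careful transfers via adjoints and lower-order error terms; this is a delicate balance (particularly here, since the $Q_j$ for $j\le N$ are second order and the $Q_j$ for $j>N$ are of order zero, so the orders of the brackets drift upward), and it is not automatic that ``each additional commutation costs at most one negative derivative.'' In short: your plan is a sketch of a proof of the Bolley--Camus--Nourrigat theorem rather than a use of it, and the key inductive step is not justified by the mechanism you give. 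Citing \cite{BCN-82} (or carrying out the genuine Kohn-style quadratic-form induction) and then following the paper's partition-of-unity-plus-absorption route closes the gap and is materially simpler.
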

\begin{proof}
	Let $\displaystyle\bigcup_{\ell =1}^{M} \Omega_{j}$ be a finite open chart covering of $\mathbb{T}^{N}$
	and let $\displaystyle\sum_{\ell =1}^{M} \psi_{j}(x) =1$ be a subordinate partition of unit, $\psi_{j} \in C_{0}^{\infty}\left( \Omega_{j}; [0,1]\right)$.
	Let $u\in C^{\infty}(\mathbb{T}^{N})$, we set $u_{j} =\psi_{j}u$. We have
	 \begin{linenomath}
	 	\begin{equation*}
	 	\| u\|_{\varepsilon} = \| \sum_{j=1}^{M} u_{j}\|_{\varepsilon} \leq \sum_{j=1}^{M} \|  u_{j}\|_{\varepsilon}.
	 	\end{equation*}
	 \end{linenomath}
 By the result of Bolley, Camus and Nourrigat, Proposition 1.5 in \cite{BCN-82}, there is a positive $\varepsilon $, such that
 \begin{linenomath}
 	\begin{equation*}
 	\| u_{j}\|_{\varepsilon}  \leq  C_{j} \left( \sum_{\ell=1}^{2N} \| Q_{\ell} u_{j}\|_{0} + \| u_{j} \|_{\sigma_{j}} \right),
 	\end{equation*}
 \end{linenomath}
 where $Q_{\ell}$ are as in \eqref{Eq:Qj}.
 Since
 \begin{linenomath}
 	\begin{equation*}
 	\| u_{j} \|_{\sigma_{j}} \leq \tilde{C}_{j} \|u\|_{\sigma_{j}} \leq \delta_{j} \|u\|_{\varepsilon} + \widetilde{\widetilde{C}}_{j} \|u\|_{0},
 	\quad
 	\text{ for every } j,
 	\end{equation*}
 \end{linenomath}
 %
 where $\delta_{j}$ are suitable small positive numbers that we will choose later, and 
 \begin{linenomath}
 	\begin{equation*}
 	 \| Q_{\ell} u_{j}\|_{0} \leq \| \psi_{j} Q_{\ell} u\|_{0} + \| [Q_{\ell}, \psi_{j}] u\|_{0} \leq \|  Q_{\ell} u\|_{0} + B_{j} \| u\|_{0}. 
 	\end{equation*}
 \end{linenomath}
Then there are positive constants $A$ and $B$ such that
 \begin{linenomath}
 	\begin{equation*}
 	\| u\|_{\varepsilon} \leq A \sum_{\ell=1}^{2N} \| Q_{\ell} u\|_{0}  + B \|u\|_{0}  + C \left( \sum_{j=1}^{M}C_{j} \delta_{j}\right)\|u\|_{0}.
 	\end{equation*}
 \end{linenomath}
 We choose $\delta_{j} $ such that  $ \sum_{j=1}^{M}C_{j} \delta_{j}= \frac{1}{2} $; so, the last term on the right hand side can be absorbed on the left 
 hand side. Summing up there is a suitable positive constant  $C$ such that
 \begin{linenomath}
 	\begin{equation*}
 	\| u\|_{\varepsilon} \leq C\left(    \sum_{\ell=1}^{2N} \| Q_{\ell} u\|_{0}  + \|u\|_{0} \right)
 	= C\left( \sum_{k=1}^{N} \| P^{k} u\|_{0} +  \sum_{k=1}^{N} \| P_{k} u\|_{-1}  + \|u\|_{0} \right).
 	\end{equation*}
 \end{linenomath}
 By the above estimate and the Proposition \ref{P1} we obtain \eqref{eq:B-Est}.
\end{proof}
%
\bigskip
\subsection{A class of globally analytic Hypoelliptic operatorst  with non-negative quadratic form}~\par
\vskip-6mm
We consider in $\mathbb{T}^{N}=\mathbb{T}^{n}_{t}\times \mathbb{T}_{x}^{m}$  operators
of the form 
\begin{linenomath}
	\begin{multline}\label{Op_ORH_t}
	P(t,D_{t},D_{x})= \sum_{j,\ell =1}^{n} a_{j,\ell}(t)D_{t_{j}}D_{t_{\ell}} 
	+
	\sum_{j,\ell =1}^{m} a_{n+j,n+\ell}(t)D_{x_{j}}D_{x_{\ell}} 
		\\
	+
	2\sum_{j=1}^{n}\sum_{\ell=1}^{m} a_{j,n+\ell}(t)D_{t_{j}}D_{x_{\ell}} 
	+ i\sum_{j=1}^{n} b_{j}(t)D_{t_{j}} +i \sum_{\ell=1}^{m} c_{\ell}(t)D_{x_{\ell}}
	+d(t,x),
	\end{multline}    
\end{linenomath}
where $d(t,x)$ is analytic function and $a_{j,\ell}(t)$, $j,\ell \in\left\{1,2,\dots ,N \right\}$,
$b_{j}(t)$, $j=1,\dots, n$, and $c_{\ell}$, $\ell= 1,\dots,m$,  are real-valued real analytic functions;
the matrix $\mathbf{A}(t) = \left(a_{j,\ell}(t)\right)$, associated to the principal part of the operator,
is symmetric, $a_{j,\ell}(t)= a_{\ell,j}(t)$, and positive semidefinite, $\mathbf{A}(t) \geq 0$ on $\mathbb{T}^{N}$.\\
Moreover, we assume the following: 
\vspace{-2em}
\begin{enumerate}
	\item[\textbf{\textit{(A1)}}] The operator $P(t,D_{t},D_{x})$, \eqref{Op_ORH_t}, satisfies the (H.O.R.)-condition globally on $\mathbb{T}^{N}$
	(see Definition \ref{HOR-c}.) 
	\item[\textbf{\textit{(A2)}}] Let  $n_{1} < n$.  We set $t=(t', t'')$, with $t'\in\mathbb{T}^{n_{1}}$ and $t''\in \mathbb{T}^{n-n_{1}}$.
	The matrix associated to $P(t,D_{t},D_{x})$ has the form
	\begin{linenomath}
		\begin{align*}
		\mathbf{A}(t)=
		\sbox0{$\begin{matrix} D & D\end{matrix}$}
		\left[\begin{array}{c|c}
		\mathlarger{\mathbf{A}}_{1}(t') & \mathmakebox[\wd0]{\mathlarger{\mathbf{A}}_{3}(t)}\\
		\hline
		{^{\mathsf{T}}}\hspace{-0.1em}\mathlarger{\mathbf{A}}_{3}(t)  & \vphantom{\begin{matrix} D \\ D \end{matrix}} \mathlarger{\mathlarger{\mathbf{A}}}_{2}(t)
		\end{array}\right].
		\end{align*}
	\end{linenomath}
	Where $\mathbf{A}_{1}(t')$ is $n_{1}\times n_{1} $-matrix such that $\mathbf{A}_{1}(t')>0$;\\
	$\mathbf{A}_{3}(t)$ is $n_{1}\times (n-n_{1}+m )$-matrix of the form
	\begin{linenomath}
		\begin{align*}
		\mathbf{A}_{3}(t)=
		\sbox0{$\begin{matrix} D & D\end{matrix}$}
		\left[\begin{array}{c|c}
		 \mathmakebox[\wd0]{\bigzero} & \mathlarger{\mathbf{A}}_{32}(t) 
		\end{array}\right].
		\end{align*}
	\end{linenomath}
	where $\bigzero$ is $n_{1}\times (n-n_{1})$-matrix whose entries are all zero
	and  $\mathbf{A}_{32}(t)$  is $n_{1}\times m$-matrix; ${^{\mathsf{T}}}\hspace{-0.1em}\mathlarger{\mathbf{A}}_{3}(t)$
    denotes the transpose of  $\mathbf{A}_{3}(t)$;\\
    $\mathbf{A}_{2}(t)$ is $(n-n_{1}+m)\times (n-n_{1}+m) $-matrix of the following form
	\begin{linenomath}
		\begin{align*}
		\mathbf{A}_{2}(t)=
		\sbox0{$\begin{matrix} D & D\end{matrix}$}
		\left[\begin{array}{c|c}
		\mathlarger{\mathbf{A}}_{22} (t') & \mathmakebox[\wd0]{\mathlarger{\mathbf{A}}_{23}(t)} \\
		\hline
		\mathlarger{\mathbf{A}}_{32}(t)& \vphantom{\begin{matrix} D \\ D \end{matrix}} \mathlarger{\mathlarger{\mathbf{A}}}_{33}(t)
		\end{array}\right]
		\end{align*}
	\end{linenomath}
	where  $\mathbf{A}_{22}(t')$  is $(n-n_{1})\times (n-n_{1}) $- diagonal matrix,  $\mathbf{A}_{23}(t)$  is $(n-n_{1})\times m $-matrix,
	$\mathbf{A}_{32}(t) ={^{\mathsf{T}}}\hspace{-0.1em}\mathbf{A}_{32}(t) $  and
	$\mathbf{A}_{33}(t)$  is $m\times m $-matrix.\\ 
    The operator $P(t,D_{t},D_{x})$ has this form
	\begin{linenomath}
		\begin{multline}\label{Op_ORH_1}
		 \longsum[6]_{j,\ell =1}^{n_{1}} a_{j,\ell}(t')D_{t_{j}}D_{t_{\ell}} 
		+\longsum[8]_{j =n_{1}+1}^{n} a_{j,j}(t') D_{t_{j}}^{2}
		+\longsum[6]_{j,\ell =1}^{m} a_{n+j,n+\ell}(t)D_{x_{j}}D_{x_{\ell}} 
		\\
		+2\sum_{j =1}^{n} \sum_{\ell =1}^{m} a_{j,n+\ell} (t)D_{t_{j}} D_{x_{\ell}}
		+ i\sum_{j=1}^{n} b_{j}(t)D_{t_{j}} +i \sum_{\ell=1}^{m} c_{\ell}(t)D_{x_{\ell}}
		+d(t,x).
		\end{multline}    
	\end{linenomath}
	\item[\textbf{\textit{(A3)}}] For all $j_{1} \in \left\{ 1, \dots, n_{1} \right\}$,
	$j_{2} , \, \ell \in \left\{  n_{1}+1, \dots, n\right\}$,
	and $k\in \left\{  1, \dots, m\right\}$ there is a positive constant $C_{\star}$  such that
	%
	\begin{linenomath}
		\begin{equation}\label{A3_1}
		\begin{cases}
		|a_{j_{2},n+k}(t)| \leq C_{\star} \left(a_{\ell,\ell}(t')\right)^{2},\\
		 a_{n+k,n+k}(t) \leq C_{\star} \left( a_{\ell,\ell}(t')\right)^{2},\\
		|a_{j_{1},n+k}(t)| \leq C_{\star} a_{\ell,\ell}(t'),\\
		| b_{j_{2}}(t) | \leq C_{\star} a_{\ell,\ell}(t'),\\
		| c_{k}(t) | \leq C_{\star} a_{\ell,\ell}(t').
		\end{cases} 
		\end{equation}
	\end{linenomath}
	We also assume that
	\begin{linenomath}
		\begin{equation}\label{A3_2}
		\longsum[7]_{i=n_{1}+1}^{n} a_{i,i}(t') \leq C_{\star} a_{\ell,\ell}(t').
		\end{equation}
	\end{linenomath}
\end{enumerate}
\begin{remark}
	Since $\mathbf{A}(t) $ is positive semidefinite matrix it follow that
	\begin{linenomath}
		\begin{equation*}
		|a_{j,\ell}(t)| 
		\leq \sqrt{ a_{j,j}(t) a_{\ell,\ell}(t)  } \leq \frac{a_{j,j}(t) + a_{\ell,\ell}(t)}{2}, \quad \forall j,\ell \in\left\{1,2,\dots ,N \right\}.
		\end{equation*}
	\end{linenomath}
     Thus, the second line in the assumption \eqref{A3_1} combined with the above inequality implies that 
     \begin{linenomath}
     	\begin{equation*}
     |	a_{n+k_{1},n+k_{2}}(t) | \leq C_{\star} \left( a_{\ell,\ell}(t')\right)^{2},
     	\end{equation*}
     \end{linenomath}
     for every $k_{1},\, k_{2} \in \left\{  1, \dots, m\right\}$.
\end{remark}
We state now our main results
\begin{theorem}\label{Th_1}
	Let $P(t, D_{t}, D_{x})$ be a second order partial differential equations with non-negative
	characteristic form as in \eqref{Op_ORH_t} with analytic coefficients defined on the 
	torus $\mathbb{T}^{n+m}$.
	Assume that $P$ satisfies the conditions \textbf{(A1)} and \textbf{(A2)} with $n_{1}=n$ 
	above described\footnote{With no condition \textbf{(A3)}.}.
	Then $P$ is globally analytic hypoelliptic.
\end{theorem}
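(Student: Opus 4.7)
The plan is to decouple the two directions: exploit the fact that, with $n_{1}=n$, the assumption \textbf{(A2)} forces the upper-left block $\mathbf{A}_{1}(t)$ to be \emph{strictly} positive definite on the compact torus $\mathbb{T}^{n}_{t}$, so the pure $t$-part
$$Q_{0}(t,D_{t}) := \sum_{j,\ell=1}^{n} a_{j,\ell}(t)\,D_{t_{j}}D_{t_{\ell}}$$
is a classical globally elliptic operator with real-analytic coefficients on $\mathbb{T}^{n}$. The whole operator $P$ is therefore a ``second order elliptic operator in $t$ with parameter $D_{x}$'', and the task is to upgrade this partial ellipticity to global analyticity using Theorem \ref{Th_BEst}.

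First, I would use Theorem \ref{Th_BEst} together with a standard Sobolev bootstrap (enabled by condition \textbf{(A1)}) to conclude $u\in C^{\infty}(\mathbb{T}^{N})$ whenever $Pu\in C^{\omega}$. This reduces matters to proving factorial-type $L^{2}$ bounds of the form $\|D_{t}^{\alpha}D_{x}^{\beta}u\|_{0}\leq C^{|\alpha|+|\beta|+1}(|\alpha|+|\beta|)!$, which by \eqref{Gs_f} is equivalent to $u\in C^{\omega}(\mathbb{T}^{N})$.

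Next I would obtain analyticity in $x$ via Theorem \ref{Th_BEst}. Since every coefficient of $P$ except $d(t,x)$ depends only on $t$, the only nontrivial commutator with a pure $x$-derivative is
$$[P,D_{x}^{\beta}]u=-\sum_{0<\gamma\leq\beta}\binom{\beta}{\gamma}(D_{x}^{\gamma}d)(D_{x}^{\beta-\gamma}u),$$
and the analyticity of $d$ gives $\|D_{x}^{\gamma}d\|_{\infty}\leq C_{0}^{|\gamma|+1}\gamma!$. Applying Theorem \ref{Th_BEst} to $v=D_{x}^{\beta}u$ and using the analyticity of $f$, an induction on $|\beta|$ based on the Cauchy-product structure of the commutator yields a constant $C$ with
$$\|D_{x}^{\beta}u\|_{\varepsilon}\leq C^{|\beta|+1}\beta!\quad\text{for all }\beta\in\mathbb{Z}_{+}^{m}.$$

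Finally I would rewrite $Pu=f$ as $Q_{0}u=f-(P-Q_{0})u$, where $(P-Q_{0})$ contains the terms $a_{j,n+\ell}D_{t_{j}}D_{x_{\ell}}$, $a_{n+j,n+\ell}D_{x_{j}}D_{x_{\ell}}$, the first-order terms, and $d$, with at most one $D_{t}$ in each summand. Since $Q_{0}$ is globally elliptic on $\mathbb{T}^{n}$ with analytic coefficients, classical analytic elliptic a priori estimates of Kotake--Narasimhan type apply on $\mathbb{T}^{n}$. Applying them iteratively and using the bounds from the previous step to control the $D_{x}$-derivatives that appear on the right-hand side, one closes a double induction on $|\alpha|$ and $|\beta|$ to obtain $\|D_{t}^{\alpha}D_{x}^{\beta}u\|_{0}\leq C^{|\alpha|+|\beta|+1}(|\alpha|+|\beta|)!$, whence $u\in G^{1}(\mathbb{T}^{N})=C^{\omega}(\mathbb{T}^{N})$. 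The main obstacle is the bookkeeping in this last step: the cross terms $a_{j,n+\ell}(t)D_{t_{j}}D_{x_{\ell}}$ consume one $D_{t}$ that must be redistributed, and the factorial constants have to be balanced so that the induction hypothesis actually closes; the uniform lower bound on $\mathbf{A}_{1}(t)$ given by compactness of $\mathbb{T}^{n}$ and the analyticity of all coefficients are exactly what makes such balancing possible.
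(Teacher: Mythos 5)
Your first two steps track the paper's proof closely: the paper also bootstraps to $C^{\infty}$, applies the subelliptic estimate to iterates of an $x$-differential operator (the paper uses $\Delta_{x}^{p}$; you use $D_{x}^{\beta}$, which is a cosmetic difference), and handles the commutator with $d(t,x)$ by Leibniz plus the inductive hypothesis. The divergence, and the problem, is in your final step.

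To recover analyticity in $t$ you try to iterate the \emph{elliptic} estimate for $Q_{0}=\sum_{j,\ell\le n}a_{j,\ell}(t)D_{t_{j}}D_{t_{\ell}}$ on $\mathbb{T}^{n}_{t}$, treating the cross terms $a_{j,n+\ell}(t)D_{t_{j}}D_{x_{\ell}}$ as lower-order perturbations. They are not: $D_{t_{j}}D_{x_{\ell}}$ is a genuine second-order operator, of the same total order as the two derivatives gained from $\|Q_{0}v\|_{0}$. Concretely, applying the elliptic estimate to $v=D_{t}^{\alpha}D_{x}^{\beta}u$ produces a control of $\|D_{t}^{\alpha}D_{x}^{\beta}u\|_{2}$ by (among other things) $\|a_{j,n+\ell}D_{t_{j}}D_{x_{\ell}}D_{t}^{\alpha}D_{x}^{\beta}u\|_{0}\simeq\|D_{t}^{\alpha+e_{j}}D_{x}^{\beta+e_{\ell}}u\|_{0}$, which is itself bounded by $\|D_{t}^{\alpha}D_{x}^{\beta}u\|_{2}$ with a constant $\sup|a_{j,n+\ell}|$ that there is no reason to make small; the term cannot be absorbed. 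If instead you try to feed it into the double induction, each pass trades $|\alpha|\mapsto|\alpha|-1$ against $|\beta|\mapsto|\beta|+1$ at fixed total order, so the recursion never terminates, and the bound $\|D_{x}^{\beta}u\|_{\varepsilon}\le C^{|\beta|+1}\beta!$ from step 2 (with $\varepsilon<1$) does not give the full extra $t$-derivative needed to break the cycle.

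The paper's proof avoids this entirely by invoking a microlocal fact (Remark \ref{Rmk2}): since $\mathbf{A}_{1}(t)>0$ on the compact torus, the characteristic variety of the full operator $P$ is contained in $\{\tau=0\}$, hence by analytic microlocal elliptic regularity $WF_{a}(u)\subset\{\tau=0\}$. Combined with Proposition \ref{Pr_BCM} applied to $\Delta_{x}$ (which gives $WF_{a}(u)\subset\mathrm{Char}(\Delta_{x})=\{\xi=0\}$ once $u$ is a $\Delta_{x}$-analytic vector), one concludes $WF_{a}(u)=\emptyset$. No separate $t$-bootstrap is needed. To repair your argument you would essentially have to reprove this microlocal fact (e.g.\ by a uniform-in-$\xi$ elliptic estimate for the family $P(t,D_{t},\xi)$ on $\mathbb{T}^{n}_{t}$), at which point you would be reconstructing the wave-front-set inclusion rather than bypassing it.
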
 
\begin{theorem}\label{Th_2}
	Let $P(t, D_{t}, D_{x})$ be a second order partial differential equations with non-negative
	characteristic form as in \eqref{Op_ORH_t} with analytic coefficients defined on the 
	torus $\mathbb{T}^{n+m}$. Assume that $P$ satisfies the conditions \textbf{(A1)}, \textbf{(A2)} and \textbf{(A3)}
	above described. Then $P$ is globally analytic hypoelliptic.
\end{theorem}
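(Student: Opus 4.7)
The goal is to upgrade the smoothness of $u$ (already granted by (A1) together with H\"ormander's theorem) to the estimate $\|D^\alpha u\|_0 \leq C^{|\alpha|+1}\alpha!$ for every multi-index $\alpha \in \mathbb{Z}_+^{n+m}$, which by \eqref{Gs_f} characterizes real-analyticity on $\mathbb{T}^{n+m}$. The plan is to iterate the basic subelliptic estimate of Theorem \ref{Th_BEst} while carefully tracking the growth of constants; assumption \textbf{(A3)} is designed precisely so that the constants grow at the analytic, rather than at a genuine Gevrey, rate.

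The iteration splits according to the block structure (A2). Since $\mathbf{A}_{1}(t')$ is positive definite, in the $t'$-directions $P$ behaves like an elliptic operator modulo lower order, and a Morrey--Nirenberg argument already yields that $u$ is analytic in $t'$ for frozen $(t'',x)$. The substance of the theorem is therefore to propagate analyticity into the degenerate $t''$- and $x$-directions. For each multi-index $\alpha$ we apply Theorem \ref{Th_BEst} to $D^\alpha u$ and estimate the commutator $[P, D^\alpha] u$. The delicate terms are those with $D_{t''}$- or $D_x$-derivatives carrying small coefficients; by \eqref{A3_1}--\eqref{A3_2} every such coefficient is dominated by $a_{\ell,\ell}(t')^p$ (with $p=1$ or $2$) for some $\ell \in \{n_1+1,\dots,n\}$, and $a_{\ell,\ell}(t')$ appears in the symbol of $P_\ell$ and $P^\ell$. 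The resulting contributions are therefore absorbed by the left-hand side $\sum_k(\|P^k u\|_0^2 + \|P_k u\|_{-1}^2)$ of \eqref{eq:B-Est}.

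Concretely, one fixes a small $\rho > 0$ and defines
\begin{equation*}
N_\rho[v;k] = \sup_{|\alpha| \leq k} \frac{\rho^{|\alpha|}}{\alpha!}\, \|D^\alpha v\|_0.
\end{equation*}
Applying \eqref{eq:B-Est} to $D^\alpha u$, using the analytic bounds $\|D^\beta a_{j\ell}\|_{\infty} \leq C_0^{|\beta|+1}\beta!$ on the coefficients (and analogous bounds for $b_j, c_\ell, d$), and controlling the commutator via (A3) as just described, produces a recursion of the form
\begin{equation*}
N_\rho[u;k] \leq C\bigl(N_{\rho'}[Pu;k] + N_{\rho'}[u;k-1]\bigr)
\end{equation*}
with $\rho' > \rho$ of comparable size. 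Since $Pu$ is analytic, $N_{\rho'}[Pu;k]$ is bounded uniformly in $k$ for $\rho'$ small, and iterating while choosing $\rho$ sufficiently small yields a uniform bound on $\sup_k N_\rho[u;k]$, which is precisely \eqref{Gs_f} with $s=1$.

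The main obstacle is the combinatorics of the commutator $[P, D^\alpha]$: one must write every term in it, modulo analytic-coefficient factors with factorial derivative growth, as a linear combination of $P^k D^\beta u$ and $P_k D^\beta u$ with $|\beta|<|\alpha|$, plus a piece controlled by $\|u\|_{|\alpha|-1}$. Assumption \textbf{(A3)} is the mechanism that enables this rewriting: it forces any ``bad'' $D_{t''}$- or $D_x$-monomial in the commutator to factor through a subprincipal term $a_{\ell,\ell}(t')D_{t_\ell}^2$ (or through the corresponding first-order terms) already contained in $P$, so that one application of \eqref{eq:B-Est} reduces it to strictly lower weight. This structural rigidity is exactly what rules out the M\'etivier-type phenomenon recalled in the introduction and produces the analytic, rather than merely Gevrey, conclusion.
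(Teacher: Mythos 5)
Your proposal captures the high–level philosophy — iterate the subelliptic estimate and use \textbf{(A3)} to control the commutator — but it contains a genuine gap at the step where you claim that the ``bad'' coefficients ``factor through'' the subprincipal terms and are absorbed by $\sum_k(\|P^k u\|_0^2+\|P_k u\|_{-1}^2)$. Assumption \textbf{(A3)} only furnishes a \emph{pointwise} bound such as $|c_p(t)|\le C_\star a_{r,r}(t')$. In the commutator $[P,D_{t_r}^q]$ the relevant terms carry $D_{t_r}^\nu c_p(t)$, and a pointwise bound on $c_p$ says nothing about $D_{t_r}^\nu c_p$: differentiating destroys the inequality, so you cannot simply pull an $a_{r,r}(t')$ out of $D_{t_r}^\nu c_p(t)$ and recombine with $D_{t_r}$ to form $P^r$. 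The paper overcomes precisely this obstruction by invoking the Weierstrass preparation theorem on each chart $\Omega_s$ of a partition of unity: it writes $a_{r,r}(t')=\widetilde a_{r,r}(t')\,\text{\textcursive{p}}_r(t_1,\widetilde t\,)$ with $\widetilde a_{r,r}$ nonvanishing, and then factors $c_p(t)=\widetilde c_p\,\text{\textcursive{p}}_r\,\widetilde{\text{\textcursive{q}}}_{p,r}$ (similarly for $b_j$, $a_{j,n+p}$, $a_{n+j,n+p}$, with $\text{\textcursive{p}}_r$ or $\text{\textcursive{p}}_r^2$ as dictated by \eqref{A3_1}). Because the Weierstrass polynomial $\text{\textcursive{p}}_r$ depends on $t'$ only, the $t''$-derivatives land entirely on the smooth nonvanishing cofactors, and an exact factor $[\widetilde a_{r,r}]^{-1}a_{r,r}(t')$ (or its square) survives after differentiation. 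Only then can $a_{r,r}(t')D_{t_r}$ be replaced by $\tfrac12 P^r-\sum_p a_{n+p,r}D_{x_p}$ and the subelliptic estimate re-applied. Your sketch treats this rewriting as immediate, but without the Weierstrass factorization the argument stalls exactly where $a_{r,r}$ vanishes — which is the non-elliptic, and hence interesting, set.

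There is also a structural mismatch with the paper worth noting, because it affects whether your weighted-norm recursion is even well posed. The paper does \emph{not} attack all multi-indices $\alpha\in\mathbb Z_+^{n+m}$ uniformly. It first proves $\|\Delta_x^p u\|_0\le C^{p+1}p!^2$ by the argument of Theorem \ref{Th_1}, which by Proposition \ref{Pr_BCM} and Remark \ref{Rmk2} already removes from $WF_a(u)$ everything except the $\tau''$-conormal set $\{\tau'=0,\ \xi=0\}$ (the $\tau'\neq0$ directions are elliptic and disposed of microlocally, not by ``freezing $(t'',x)$'' as you suggest). Only then does it estimate $\|D_{t_r}^q u\|_0$ for $r\in\{n_1+1,\dots,n\}$, and this second stage \emph{uses} the first: in the iteration one repeatedly turns $t_r$-derivatives into $x$-derivatives, and the already-established $x$-analyticity is what makes the terminal terms (e.g. \eqref{eq:J_6-iter}) have analytic growth. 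A single weighted norm $N_\rho[u;k]$ over all $\alpha$ does not reflect this two-stage dependence; if you try to close the recursion for general $\alpha$ in one pass, the $D_x$-heavy terms generated by the iteration have no a priori bound, and the recursion does not close with a uniform constant. So both the Weierstrass factorization and the two-stage microlocal reduction are essential pieces that your sketch needs but does not supply.
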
 
\begin{remark}
	The class of second order partial differential equations with non-negative
	characteristic form described by the assumptions  \textbf{(A1)} and \textbf{(A2)}  with $n_{1}=n$
	contains a subclass of sums of squares operators of H\"ormander type studied in \cite{ch94};
	the class described by the assumptions \textbf{(A1)}, \textbf{(A2)} and \textbf{(A3)}
	contains a subclass of sum of squares operators of H\"ormander type studied in \cite{BC-2022}.
	We point out that not all the operators in the class under investigation
	can be written in the form of H\"ormander's operators.
	These claims are motivated by some examples shown in the appendix.
\end{remark} 
\begin{remark}
	Using the some strategy adopted to proof  Theorem \ref{Th_2} can be showed that:\\
	$\mathbf{i)}$ let $P$ be of the following form:
	\begin{linenomath}
		\begin{multline*}
		\longsum[6]_{j,\ell =1}^{n_{1}} a_{j,\ell}(t')D_{t_{j}}D_{t_{\ell}} 
		+\longsum[8]_{j =n_{1}+1}^{n} a_{j,j}(t') D_{t_{j}}^{2}
		+\longsum[6]_{j,\ell =1}^{m} a_{n+j,n+\ell}(t')D_{x_{j}}D_{x_{\ell}} 
		\\
		+2\sum_{j =1}^{n} \sum_{\ell =1}^{m} a_{j,n+\ell} (t')D_{t_{j}} D_{x_{\ell}}
		+ i\sum_{j=1}^{n_{1}} b_{j}(t)D_{t_{j}} +  i\longsum[7]_{j=n_{1}+1}^{n} b_{j}(t')D_{t_{j}} 
		\\
		+i \sum_{\ell=1}^{m} c_{\ell}(t')D_{x_{\ell}}
		+d(t,x).
		\end{multline*}    
	\end{linenomath}
	Assume that $P$ satisfies the assumptions $\mathbf{(A1)}$ and $\mathbf{(A2)}$. Then the operator $P$ is globally analytic hypoellitic. 
	In this case the assumptions $\eqref{A3_1}$ and $\eqref{A3_2}$ in $\mathbf{(A3)}$ are not needed.\\
	$\mathbf{ii)}$ let $P$ be of the following form:
	\begin{linenomath}
		\begin{multline*}
		\longsum[6]_{j,\ell =1}^{n_{1}} a_{j,\ell}(t')D_{t_{j}}D_{t_{\ell}} 
		+\longsum[8]_{j =n_{1}+1}^{n} a_{j,j}(t') D_{t_{j}}^{2}
		+\longsum[6]_{j,\ell =1}^{m} a_{n+j,n+\ell}(t)D_{x_{j}}D_{x_{\ell}} 
		\\
		+2\sum_{j =n_{1}+1}^{n} \sum_{\ell =1}^{m} a_{j,n+\ell} (t)D_{t_{j}} D_{x_{\ell}}
		+ i\sum_{j=1}^{n_{1}} b_{j}(t,x)D_{t_{j}} 
		+ i\sum_{j=n_{1}+1}^{n} b_{j}(t)D_{t_{j}} 
		\\
		+i \sum_{\ell=1}^{m} c_{\ell}(t)D_{x_{\ell}}
		+d(t,x).
		\end{multline*}    
	\end{linenomath}
	Assume that $P$ satisfies the assumptions $\mathbf{(A1)}$, $\mathbf{(A2)}$ 
	with $\mathbf{A}_{3}(t)\equiv \bigzero$, and $\mathbf{(A3)}$. 
	Then the operator $P$ is globally analytic hypoellitic. 
	In this case the regularity with respect to the $x$-variables can be obtained not using the induction argument
	as done in the proof of Theorem \ref{Th_1} but using the same strategy to obtain the regularity
	with respect to $t''$-variables  in the proof of Theorem \ref{Th_2} i.e.
	taking maximum advantage from the subelliptic estimate \ref{eq:B-Est_P}.
\end{remark}   
%
\section{Proof of Theorems \ref{Th_1} and \ref{Th_2}}
\renewcommand{\theequation}{\thesection.\arabic{equation}}
\setcounter{equation}{0} \setcounter{theorem}{0}
\setcounter{proposition}{0} \setcounter{lemma}{0}
\setcounter{corollary}{0} \setcounter{definition}{0}
\setcounter{remark}{1}
%
Preliminary remarks:
\begin{remark}\label{Rmk1}
	By the assumptions we have 
	\begin{linenomath}
		\begin{align}\label{eq:P_k1}
		&P^{k} = 2 \sum_{j=1}^{n_{1}} a_{j,k}(t') D_{t_{j}} + 2 \sum_{\ell=1}^{m} a_{n+\ell,k}(t) D_{x_{\ell}}, \quad k=1,\dots, n_{1};
		\\
		\nonumber
		&P^{k} = 2a_{k,k}(t') D_{t_{k}} + 2 \sum_{\ell=1}^{m} a_{n+\ell,k}(t) D_{x_{\ell}}, \quad k=n_{1}+1,\dots, n;
		\\
		\nonumber
		&P^{n+k} = 2 \sum_{\ell=1}^{m} a_{n+\ell,n+k}(t) D_{x_{\ell}}
		+ 2\sum_{j =1}^{n}  a_{j,n+k} (t)D_{t_{j}}, \quad k=1,\dots, m;
		\end{align}
	\end{linenomath} 
	and
	\begin{linenomath}
		\begin{align}\label{eq:P_k2}
		&P_{k} = \longsum[6]_{j,\ell =1}^{n_{1}} a_{j,\ell}^{(k)}(t')D_{t_{j}}D_{t_{\ell}} 
		+\sum_{j =1}^{n} a_{j,j}^{(k)}(t') D_{t_{j}}^{2}
		+\longsum[6]_{j,\ell =1}^{m} a_{n+j,n+\ell}^{(k)}(t)D_{x_{j}}D_{x_{\ell}} 
		\\
		\nonumber
		&
		\qquad\qquad\qquad\qquad\qquad\,\,\,+2\sum_{j =1}^{n} \sum_{\ell =1}^{m} a_{j,n+\ell}^{(k)} (t)D_{t_{j}} D_{x_{\ell}}, \quad k=1,\dots, n_{1};
		\\
		\nonumber
		&P_{k} =2\sum_{j =1}^{n} \sum_{\ell =1}^{m} a_{j,n+\ell}^{(k)} (t)D_{t_{j}} D_{x_{\ell}}
		+\longsum[6]_{j,\ell =1}^{m} a_{n+j,n+\ell}^{(k)}(t)D_{x_{j}}D_{x_{\ell}}, 
		\\
		\nonumber
		&\qquad\qquad\qquad\qquad\qquad\qquad\qquad\qquad\qquad\qquad\qquad k=n_{1}+1,\dots, n;
		\\
		\nonumber
		&P_{n+k} = 0, \quad k=1,\dots, m.
		\end{align}
	\end{linenomath} 
	Due the hypothesis $\mathbf{A}_{1}(t')>0$, assumption \textbf{(A2)}, the vector fields 
	\begin{linenomath}
		\begin{equation}\label{eq:Pkt}
		\widetilde{P}^{k}= 2\sum_{j=1}^{n_{1}} a_{j,k}(t') D_{t_{j}} , \qquad k=1,\dots,n_{1},
		\end{equation}
	\end{linenomath}
are linearly independent for every $t'$ in $\mathbb{T}^{n_{1}}$.\\
The estimate obtained in the Theorem \ref{Th_BEst} adapted to our case becomes 
\begin{linenomath}
	\begin{align}\label{eq:B-Est_P}
	\|u \|_{\varepsilon}^{2}+ \sum_{k=1}^{N} \| P^{k} u\|^{2}_{0} +\sum_{k=1}^{n} \|P_{k}u\|_{-1}^{2}
	\leq C \left( \sum_{\ell=0}^{N}\langle E_{\ell}P u, E_{\ell}u\rangle + \|u\|_{0}^{2}\right), 
	\end{align} 
\end{linenomath}
where $ u \in C^{\infty}(\mathbb{T}^{N})$, $P^{k}$, $k=1,\dots, N$,
and $P_{k}$, $k=1,\dots,n$, are as in \eqref{eq:P_k1}
and \eqref{eq:P_k1} respectively, $E_{0}=1$, $E_{\ell}= D_{t_{\ell}} \Lambda^{-1}$,
$\ell=1,\dots,n$, and $E_{\ell}= D_{x_{\ell-n}} \Lambda^{-1}$, $\ell=n+1,\dots,n+m=N$,
here $\Lambda^{-1}$ denotes the pseudodifferential operator associate
to the symbol $(1+|\tau|^{2} + |\xi|^{2})^{-1/2}$.
\end{remark}
\begin{remark}\label{Rmk2}
	Let $P(t,D_{t},D_{x})$ be as in \eqref{Op_ORH_1} and we consider the problem
	$ Pu=f$, $f\in C^{\omega}(\mathbb{T}^{N})$.
	Due to the assumptions  \textbf{(A2)} and \textbf{(A3)}, the characteristic set of $P(t,D_{t},D_{x})$
	is contained in
	\begin{linenomath}
		\begin{equation*}
		\lbrace (t',t'',x, \tau', \tau'', \xi)\in T^{*}\mathbb{T}^{N}\setminus\{0\}\,:\, \tau'=0 \text{ and } |\tau''|+|\xi| >0\rbrace.  
		\end{equation*}
	\end{linenomath}
	So, the points $(t',t'',x, \tau', \tau'', \xi)\in T^{*}\mathbb{T}^{N}\setminus\{0\}$ with $\tau'\neq 0$ do not belong to $WF_{a}(u)$,
	the analytic wave-front set of $u$.\\
	In the special case $n_{1}=n$, the characteristic set of $P(t,D_{t},D_{x})$ is contained in 
	$\lbrace (t,x, \tau, \xi)\in T^{*}\mathbb{T}^{N}\setminus\{0\}\,:\, \tau=0 \text{ and } |\xi| >0\rbrace$, 
	the points $(t,x, \tau, \xi)\in T^{*}\mathbb{T}^{N}\setminus\{0\}$ with $\tau\neq 0$ do not belong to $WF_{a}(u)$. 
\end{remark}
To proof the results stated in the Theorems 
and in order make the paper as self-contained as possible,
we recall some results that will often used in the proof.\\
Let us first recall the notion of \textit{global Gevrey vector of order $s$}.
Let $P(y,D)$ be a real-analytic, linear partial differential operator
on $\mathbb{T}^{N}$ of order $m\geq 1$.
$v \in \mathscr{D}'(\mathbb{T}^{N})$ belongs to 
$ G^{s}(\mathbb{T}^{N}; P)$, the set of global Gevrey vectors
of order $s$ for $P(y,D)$, if $(P)^{j}v$
\footnote{Here $(P)^{j}v$ denotes the power $j$-th of the operator $P$.}
belongs to $L^{2}(\mathbb{T}^{N})$
and there is a positive constant $C$ such that
\begin{linenomath}
	\begin{equation*}
	\|(P)^{j}v \|_{0} \leq C^{j+1} j!^{sm},
	\end{equation*}
\end{linenomath} 
for every $j\in \mathbb{Z}_{+}$.\\
We recall that $G^{s}(\mathbb{T}^{N}) \subset G^{s}(\mathbb{T}^{N}; P)$
for every $P(y,D)$ and every $s$.

\begin{proposition}[\cite{BCM-78}]\label{Pr_BCM}
	Let $v \in G^{s}(\mathbb{T}^{N}; P)$, $P= P(y,D)$ as above.
	If $(y,\eta) \in T^{*}\mathbb{T}^{N}\setminus\{0\}$ does not belong to the characteristic variety
	of $P$, then $(y,\eta) \notin WF_{s}(v) $, the s-Gevrey wave front set of $u$.\\
	In particular, if $P$ is elliptic,  then $G^{s}(\mathbb{T}^{N})= G^{s}(\mathbb{T}^{N}, P)$.  
\end{proposition}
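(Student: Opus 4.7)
The plan is to proceed by a standard microlocal argument, built around a Gevrey-$s$ parametrix for $P$ at the non-characteristic point $(y_{0},\eta_{0})$. First I would recall the Fourier-side characterization of the Gevrey-$s$ wave front set: $(y_{0},\eta_{0})\notin WF_{s}(v)$ iff there exist a Gevrey-$s$ cut-off $\chi$ with $\chi(y_{0})\neq 0$, a conic neighborhood $\Gamma$ of $\eta_{0}$, and constants $C,\delta>0$ with $|\widehat{\chi v}(\eta)|\leq C e^{-\delta|\eta|^{1/s}}$ for $\eta\in\Gamma$. My goal is thus to produce this exponential decay.

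Since $p_{m}(y_{0},\eta_{0})\neq 0$, the Gevrey-$s$ symbolic calculus (in the spirit of Boutet de Monvel--Kr\'ee) provides a properly supported pseudodifferential operator $E$ of order $-m$ with Gevrey-$s$ symbol, defined microlocally in a conic neighborhood $\Gamma'\supset\Gamma$ of $(y_{0},\eta_{0})$, such that $EP=\mathrm{Id}+R$ with $R$ Gevrey-$s$ regularizing on $\Gamma'$. Iterating this identity $N$ times against a Gevrey-$s$ microlocalizer $\chi$, I would write
\begin{equation*}
\chi v \;=\; \chi E^{N}P^{N}v \;+\; \sum_{j=0}^{N-1}\chi E^{j}R\,P^{N-1-j}v \;+\;(\text{commutators of }\chi,E,P).
\end{equation*}
The hypothesis $\|P^{j}v\|_{0}\leq C_{0}^{j+1}(j!)^{sm}$, combined with $E^{N}$ gaining $Nm$ derivatives on $\Gamma$, yields for $\eta\in\Gamma$
\begin{equation*}
\bigl|\widehat{\chi E^{N}P^{N}v}(\eta)\bigr| \;\leq\; C_{1}^{N+1}(N!)^{sm}\,(1+|\eta|)^{-Nm}.
\end{equation*}
Optimizing $N\sim c|\eta|^{1/s}$ and applying Stirling's formula $(N!)^{sm}\leq C^{N}N^{smN}$ produces the desired bound $|\widehat{\chi v}(\eta)|\leq Ce^{-\delta|\eta|^{1/s}}$, since the terms involving $R$ are Gevrey-$s$ regularizing on $\Gamma'$ and therefore contribute comparable (or better) decay.

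The main obstacle is the bookkeeping for the Gevrey symbolic calculus for the iterated operators $E^{j}$ and the commutators $[\chi,E^{j}]$, $[E,P]$, $[\chi,P^{j}]$. One must show that the symbol $\sigma(E^{j})$ satisfies estimates of the form $|\partial_{\eta}^{\alpha}\partial_{y}^{\beta}\sigma(E^{j})(y,\eta)|\leq C^{j+|\alpha|+|\beta|+1}(\alpha!\,\beta!)^{s}(1+|\eta|)^{-jm-|\alpha|}$ uniformly in $\Gamma'$, with constants that do \emph{not} blow up too fast in $j$; the factorials appearing in the Gevrey formal-sum calculus must be absorbed into $(N!)^{sm}$ without destroying the exponential optimization above. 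Once this symbolic machinery is in place, the proof becomes routine, and for the elliptic statement it is enough to note that the microlocal argument can be globalized, so that the Fourier-side estimate \eqref{Gs_f2} follows directly.
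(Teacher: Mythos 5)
The paper does not give a proof of Proposition~\ref{Pr_BCM}; it is simply quoted from the reference \cite{BCM-78}, so there is no paper proof to compare against. Your sketch is a faithful reconstruction of the Kotake--Narasimhan type argument that underlies such iterate theorems, and the optimization $N \sim c|\eta|^{1/s}$ gives the correct scaling (indeed $(N!)^{sm}(1+|\eta|)^{-Nm}$ is minimized at $N\sim |\eta|^{1/s}$, producing $e^{-\delta|\eta|^{1/s}}$). Two remarks. First, a minor index error: from $EP=\mathrm{Id}+R$ the telescoping gives $v = E^{N}P^{N}v - \sum_{j=0}^{N-1}E^{j}R\,P^{j}v$, not $P^{N-1-j}$ in the sum. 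Second, and more importantly, the point you flag as ``the main obstacle'' is not a side issue but is precisely the content of \cite{BCM-78}: one needs uniform-in-$j$ Gevrey symbol estimates for $E^{j}$, together with a nested family of microlocalizers $\chi_{0}\supset\chi_{1}\supset\cdots\supset\chi_{N}$ supported in shrinking conic neighborhoods of $(y_{0},\eta_{0})$, so that the ``regularizing'' remainders $E^{j}R$ actually act as regularizing on the terms they are paired with; the constants in the Gevrey calculus must then be tracked so as not to overwhelm the $e^{-\delta|\eta|^{1/s}}$ gain. Your outline is correct in structure, but the bookkeeping you defer is exactly where the substance of the theorem lies, and a complete proof would need to make that uniform Gevrey parametrix calculus explicit (for instance along the lines of Bolley--Camus--Mattera or M\'etivier).
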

\begin{lemma}\label{L_BCCJ}[\cite{bccj_2016}]
	Let $f,g \in C^{\infty}(\mathbb{T}^{N})$ and $k\in \mathbb{Z}_{+}$, then
	\begin{linenomath}
		\begin{equation}\label{Lfg}
		\Delta^{k}(fg) = \sum_{\ell=0}^{k} 2^{\ell} \binom{k}{\ell} \sum_{j=0}^{k-\ell} \binom{k-\ell}{j} 
		\longsum[5]_{|I|=\ell} \left( D_{I} \Delta^{j}f\right)\left( D_{I} \Delta^{k-\ell-j}g\right),
		\end{equation}
	\end{linenomath}
	where  $I$ is a sequence $I=(i_{1}, \dots,i_{\ell}) $ of elements in $\{1,\dots, N\} $,
	$|I|\doteq \ell$ is the length of the sequence $I$ and $D_{I}= D_{y_{i_{1}}}\cdots D_{y_{i_{\ell}}}$.
\end{lemma}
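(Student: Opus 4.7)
The plan is to prove the formula either by a direct combinatorial expansion or, somewhat more cleanly, by induction on $k$. I would favor the combinatorial route because it makes the coefficients transparent.

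Expand $\Delta^{k} = \Bigl(\sum_{i=1}^{N} D_{i}^{2}\Bigr)^{k} = \sum_{(a_{1},\dots,a_{k})\in\{1,\dots,N\}^{k}} D_{a_{1}}^{2} D_{a_{2}}^{2}\cdots D_{a_{k}}^{2}$, and apply each factor $D_{a_{r}}^{2}$ to the product via the elementary one-variable identity
\begin{equation*}
D_{j}^{2}(\phi\psi) = (D_{j}^{2}\phi)\psi + 2(D_{j}\phi)(D_{j}\psi) + \phi(D_{j}^{2}\psi).
\end{equation*}
Since the operators $D_{j}$ commute, the order of application is irrelevant, and for each position $r\in\{1,\dots,k\}$ I can assign a mode $m_{r}\in\{\mathrm{f},\mathrm{split},\mathrm{g}\}$ that records whether $D_{a_{r}}^{2}$ falls entirely on $f$ (coefficient $1$), splits as $D_{a_{r}}\otimes D_{a_{r}}$ (coefficient $2$), or falls entirely on $g$ (coefficient $1$).

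For a mode pattern having $\ell$ splits, $j$ entries of type $\mathrm{f}$, and $k-\ell-j$ entries of type $\mathrm{g}$, the combinatorial coefficient is $2^{\ell}$, and there are $\binom{k}{\ell}\binom{k-\ell}{j}$ choices of which positions receive which mode. Next I sum over the indices $a_{r}$: the $j$ coordinates at $\mathrm{f}$-positions are summed independently, so their squares pool into $(\sum_{a}D_{a}^{2})^{j}=\Delta^{j}$ acting on $f$; the $k-\ell-j$ coordinates at $\mathrm{g}$-positions likewise contribute $\Delta^{k-\ell-j}$ on $g$; and the $\ell$ coordinates at split-positions form an ordered multi-index $I=(i_{1},\dots,i_{\ell})$ of length $\ell$, producing $D_{I}$ on each of $f$ and $g$. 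Collecting everything yields the formula as stated.

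The main obstacle is the bookkeeping of which sums become Laplacian powers and which remain explicit (the ordered multi-indices $I$ at split positions), together with checking that the commutativity argument is deployed rigorously so the pooling $\sum_{a_{1},\dots,a_{j}}D_{a_{1}}^{2}\cdots D_{a_{j}}^{2}=\Delta^{j}$ is legitimate. As a backup, a straightforward induction on $k$ works as well: one applies $\Delta$ to the formula at level $k$ using the single-step identity above, noting that $\Delta D_{I}\Delta^{j} = D_{I}\Delta^{j+1}$ and that the split contribution promotes $|I|$ from $\ell$ to $\ell+1$; the coefficient matching at level $k+1$ then reduces to the Pascal-type relations $\binom{k}{\ell-1}+\binom{k}{\ell}=\binom{k+1}{\ell}$ and $\binom{k-\ell}{j-1}+\binom{k-\ell}{j}=\binom{k-\ell+1}{j}$, once one tracks the factor of $2$ coming from the split mode.
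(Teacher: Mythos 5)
Your combinatorial proof is correct. The paper itself does not prove this lemma — it merely cites it from \cite{bccj_2016} — so there is no in-text argument to compare against; but your expansion is sound: writing $\Delta^{k}$ as a sum over tuples $(a_{1},\dots,a_{k})$, applying the one-variable Leibniz identity $D_{a}^{2}(\phi\psi)=(D_{a}^{2}\phi)\psi+2(D_{a}\phi)(D_{a}\psi)+\phi(D_{a}^{2}\psi)$ at each position, and then pooling the $\mathrm{f}$-mode and $\mathrm{g}$-mode indices into Laplacian powers while the split-mode indices range over ordered sequences $I$ of length $\ell$ (with repetitions allowed, which is exactly what $\sum_{|I|=\ell}$ requires) reproduces the claimed coefficients $2^{\ell}\binom{k}{\ell}\binom{k-\ell}{j}$. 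The inductive route you sketch as a backup is also standard and would work; the combinatorial version you chose has the advantage of making the origin of each binomial factor and of the weight $2^{\ell}$ immediate.
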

\begin{remark}\label{Rk_L_BCCJ}
	Let $f,g \in C^{\infty}(\mathbb{T}^{N})$ and $k\in \mathbb{Z}_{+}$, then
	\begin{linenomath}
		\begin{multline}\label{eq:Rk_L3}
		[\Delta^{k}, f] g =
		\sum_{\ell=1}^{k} 2^{\ell} \binom{k}{\ell} \sum_{j=0}^{k-\ell} \binom{k-\ell}{j} 
		\longsum[5]_{|I|=\ell} \left( D_{I} \Delta^{j}f\right)\left( D_{I} \Delta^{k-\ell-j}g\right)
		\\
		+ \sum_{j=1}^{k} \binom{k}{j} \Delta^{j} f \Delta^{k-j} g.
		\end{multline}
	\end{linenomath}
\end{remark}
\begin{lemma}\label{L_BCCJ_2}[\cite{bccj_2016}]
	Let $f \in C^{\infty}(\mathbb{T}^{N})$ and $k\in \mathbb{Z}_{+}$. Assume that
	there exists a positive constant $C$, such that 
	\begin{linenomath}
		\begin{equation*}
		\| \Delta^{q} f\|_{0}
		\leq C^{q+1} q!^{2} \, \text{ if }\, q\leq k - 1.
		\end{equation*}
	\end{linenomath}
	Then,
	\begin{linenomath}
		\begin{equation*}
     	 \|D_I  \Delta^{q-|I|} f\|\leq C^{q-|I|/2+1}q!(q-|I|)!
    	\end{equation*}
	\end{linenomath}
    is valid for all sequences $I=(i_1,\ldots,i_j)$ of elements
    in
    $\{1,\ldots,m\}$, with length $|I|\doteq j \leq q < k$. Moreover, if $0 < |I| \leq k$, then
    \begin{linenomath}
    	\begin{equation*}
     	 \|D_I  \Delta^{k - |I|} f\|\leq C^{k-|I|/2}(k-1)!(k - |I| + 1)!
   	 \end{equation*}
\end{linenomath}	
$D_{I}$ is as in the previous lemma. 
\end{lemma}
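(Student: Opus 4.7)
The whole plan is to reduce everything to one basic Cauchy--Schwarz estimate in Fourier on $\mathbb{T}^{N}$: for every $v\in C^{\infty}(\mathbb{T}^{N})$ and every sequence $J$ of elements of $\{1,\dots,N\}$,
\[
\|D_{J}v\|_{0}^{2}=\sum_{\xi\in\mathbb{Z}^{N}}|\xi^{J}|^{2}|\widehat{v}(\xi)|^{2}\le \|v\|_{0}\,\|\Delta^{|J|}v\|_{0},
\]
obtained by bounding $|\xi^{J}|^{2}\le|\xi|^{2|J|}$ and then applying Cauchy--Schwarz to $\{|\widehat{v}(\xi)|\}$ and $\{|\xi|^{2|J|}|\widehat{v}(\xi)|\}$ in $\ell^{2}(\mathbb{Z}^{N})$. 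Once this baseline inequality is in hand, both estimates of the lemma reduce to placing the hypothesis $\|\Delta^{q}f\|_{0}\le C^{q+1}(q!)^{2}$, valid for $q\le k-1$, on the right-hand side.

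For the first bound, I apply the baseline estimate with $v=\Delta^{q-|I|}f$, so that $\Delta^{|J|}v=\Delta^{q}f$, obtaining $\|D_{I}\Delta^{q-|I|}f\|_{0}^{2}\le\|\Delta^{q-|I|}f\|_{0}\cdot\|\Delta^{q}f\|_{0}$. Since $|I|\le q\le k-1$ both factors lie within the range of the hypothesis; multiplying the two bounds and taking square roots yields exactly $C^{q-|I|/2+1}\,q!(q-|I|)!$, as claimed.

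The second bound is more delicate because the naive splitting would bring in $\|\Delta^{k}f\|_{0}$, which is not controlled. The idea is a refined splitting: write
\[
|\xi^{I}|^{2}|\xi|^{4(k-|I|)}|\widehat{f}|^{2}=\bigl(|\xi|^{2(k-1)}|\widehat{f}|\bigr)\cdot\bigl(|\xi^{I}|^{2}|\xi|^{2(k-2|I|+1)}|\widehat{f}|\bigr),
\]
apply Cauchy--Schwarz in $\ell^{2}(\mathbb{Z}^{N})$, and bound $|\xi^{I}|^{4}\le|\xi|^{4|I|}$; this produces
\[
\|D_{I}\Delta^{k-|I|}f\|_{0}^{2}\le\|\Delta^{k-1}f\|_{0}\cdot\|\Delta^{k-|I|+1}f\|_{0}.
\]
For $|I|\ge 2$ both Laplacian powers on the right are $\le k-1$, so the hypothesis applies and routine factorial arithmetic recovers the target $C^{k-|I|/2}(k-1)!(k-|I|+1)!$, absorbing constants into $C$ if necessary.

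The main obstacle is the borderline case $|I|=1$: the splitting above then produces $\|\Delta^{k}f\|_{0}$, and, more generally, no product decomposition of $|\xi_{i}|^{2}|\xi|^{4(k-1)}$ into two factors with exponents both $\le 2(k-1)$ exists, since the total degree $2k-1$ strictly exceeds $2(k-1)$. To close this case I would either integrate by parts using $D_{i}^{*}=D_{i}$ and $[D_{i},\Delta]=0$ to rewrite $\sum_{i}\|D_{i}\Delta^{k-1}f\|_{0}^{2}=\langle -\Delta\,\Delta^{k-1}f,\Delta^{k-1}f\rangle$ and then exploit a Cauchy--Schwarz with the correct interpolation of Laplacian powers $\le k-1$, or, alternatively, argue by induction on $k$, using the already proved first-part bound at level $k-1$ to promote control to level $k$. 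The combinatorial identities of Lemma~\ref{L_BCCJ} and Remark~\ref{Rk_L_BCCJ} should play a role in handling the commutators that arise.
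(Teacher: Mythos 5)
Your first bound and, for $|I|\ge 2$, the second bound are correct and take the natural Fourier Cauchy--Schwarz route. For the second bound the right splitting is $\|D_I\Delta^{k-|I|}f\|_0^2\le\|\Delta^{k-1}f\|_0\,\|\Delta^{k-|I|+1}f\|_0$, which is exactly what your refined decomposition produces; it delivers the factorials $(k-1)!\,(k-|I|+1)!$ as claimed (your computation yields one extra power of $C$, a harmless normalization artefact). You also rightly flag $|I|=1$ as the delicate case.

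However, neither of your suggested repairs closes that case, and the obstruction is structural. Integration by parts only rewrites $\sum_i\|D_i\Delta^{k-1}f\|_0^2$ as $|\langle\Delta^{k}f,\Delta^{k-1}f\rangle|$, so any Cauchy--Schwarz applied to it still needs $\|\Delta^{k}f\|_0$, which the hypothesis does not control. An induction on $k$ fails already at $k=1$, where the hypothesis reduces to $\|f\|_0\le C$ and says nothing about $\|D_i f\|_0$, and the inductive step again reintroduces $\Delta^k f$. More decisively, a degree count shows that no Fourier-side interpolation from the stated hypothesis can succeed: one has $\|D_i\Delta^{k-1}f\|_0^2\le\sum_\xi|\xi|^{4k-2}|\hat f(\xi)|^2$, and any H\"older decomposition of this sum into factors $\|\Delta^{a_j}f\|_0^{2\theta_j}$ with $\sum_j\theta_j=1$ must satisfy $\sum_j\theta_j a_j=k-\tfrac12>k-1$, forcing some $a_j\ge k$. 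Your proposal therefore has a genuine gap at $|I|=1$; the proof cited from~\cite{bccj_2016} must either work under a hypothesis valid up to $q\le k$ (in which case your split already handles $|I|=1$) or use a different idea that you have not reconstructed.
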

%
\subsection{Proof of the Theorem \ref{Th_1}}
%
We consider the problem $Pu=f$, $f\in C^{\omega}(\mathbb{T}^{N})$. 
In this case the operator $P$ has the form \eqref{Op_ORH_1} where $n_{1}=n$.
Due to the remark \ref{Rmk2}, in order to establish that $u$ is analytic, i.e.
that $WF_{a}(u)=\emptyset$,
it is enough to show that $u$ is a global analytic vector for $\Delta_{x}$,
$\Delta_{x}$ denotes the Laplace operator with respect the $x$-variable.\\
The argument is the same used in \cite{bccj_2016} and in \cite{bj_2024}, but we shall present it here for the sake of completeness. 
We use a variation of the estimate \eqref{eq:B-Est_P}:
\begin{linenomath}
\begin{equation}
\label{Sub_1_T2}
	\|u \|_{\varepsilon}
	\leq C \left( \| P u\|_{0} + \|u\|_{0}\right), 
\end{equation}
\end{linenomath}
where $C$ is a suitable new positive constant.\\
We use the following interpolation inequality
which is valid for $\delta>0$ and $r$, $s$ and $t$ in $\mathbb{R}$ such that $r<s<t$:
\begin{linenomath}
	\begin{equation}\label{Inter_Est}
	\| v\|_{s} \leq  \delta \| v\|_{t} + \delta^{-\frac{s-r}{t-s}} \|v\|_{r}, \quad v \in H^{t}(\mathbb{T}^{N}).
	\end{equation}
\end{linenomath}
Setting $v=u$, $s=0$, $t=\varepsilon$, $r=-2$, and $\delta = (2C)^{-1}$,
the estimate \eqref{Sub_1_T2} can be rewritten as follow
\begin{linenomath}
	\begin{equation}\label{Sub_2_T2}
	\|u \|_{\varepsilon}
	\leq C \left(\| P u\|_{0} + \|u\|_{-2} \right), 
	\end{equation}
\end{linenomath}
for a different constant $C > 0$. 
We shall proceed by induction, so assume that there is a positive constant $A$ such that 
\begin{linenomath}
	\begin{equation}\label{Ind_x}
	\| \Delta_{x}^{j} u\|_{0} \leq A^{j+1}  j!^{2},  
	\end{equation}
\end{linenomath}
for all $j\leq p-1$.
We have to show that the above estimate holds for $j=p$.
We replace $u$ by $\Delta_{x}^{p}u$ in \eqref{Sub_2_T2}:
\begin{linenomath}
	\begin{equation}\label{eq:B-Est_P_x-2}
	\|\Delta^{p}_x u\|_{\varepsilon} 
\leq C \left( \|\Delta_x^{p} Pu \|_{0} + \| [\Delta_x^{p}, d ] u \|_{0} + \| \Delta_x^{p} u \|_{-2} \right).
	\end{equation} 
\end{linenomath}
 We select a
constant $B>0$ such that
\begin{equation*}
\|D_{I} \Delta_x^{j} Pu  \|_0 \leq B^{j + |I|/2 + 1} j!^{2} |I|!, \qquad \|D_I \Delta_x^j d \|_{L^\infty} \leq B^{j + |I|/2 + 1} j!^2 |I|!
\end{equation*}
for all $ j \in \mathbb{Z}_{+}$ and $ I = (i_{1},\, \dots, i_{|I|})$.
Now in view of \eqref{eq:Rk_L3} we obtain
\begin{align*}
\| [ \Delta_x^{p} , d ] u \|_{0}
& \leq \sum_{\ell =1}^{p} 2^{\ell} \binom{p}{\ell} \sum_{j=0}^{p-\ell} \binom{p-\ell}{j} \sum_{|I|=\ell} 
\| D_I \Delta_x^{j} d\|_{\infty} \| D_I \Delta_x^{p-\ell-j} u \|_{0} \\
& \quad
+ \sum_{ j = 1 }^{p} \binom{p}{ j} \| \Delta_x^{j} d \|_{\infty} \| \Delta_x^{ p-j } u \|_{0} \doteq S_{1} +S_{2}.
\end{align*}
Firstly we observe that by the inductive hypothesis we can estimate
\begin{equation*} 
S_{2} \leq  \sum_{j=1}^{p} \binom{p}{ j} B^{j+1} j!^{2} A^{p - j + 1 } ( p - j )!^2 
\leq
BA^{p+1} p!^{2} \sum_{j=1}^{p}  \left( \frac{B}{A} \right)^{j}.
\end{equation*}
For the other term we use again the inductive hypothesis in
conjunction with Lemma \ref{L_BCCJ_2}:
\begin{align*}
S_{1} \leq & 
 \sum_{\ell=1}^{p} 2^{\ell} \binom{p}{\ell} \sum_{j=1}^{p-\ell} \binom{p-\ell}{ j} 
\sum_{|I| =\ell} B^{j+\frac{\ell}{2}+1} j!^{2} \ell! A^{p-\frac{\ell}{2}-j + 1} (p-j)!(p-j-\ell)! \\
& + \sum_{\ell=1}^{p} 2^{\ell} \binom{p}{\ell}
\sum_{|I| =\ell} B^{\frac{\ell}{2}+1} \ell! A^{p-\frac{\ell}{2}} (p-1)!(p-\ell + 1)! \\
& 
\leq BA^{p+1} p!^2 \sum_{\ell=1}^p 2^\ell m^\ell \sum_{j=0}^{p-\ell}\left(\frac{B}{A}\right)^{j+\ell/2}.
\end{align*}
Finally, if $\gamma>0$ is such that $\|\Delta_x v\|_{-2}\leq \gamma
\|v\|$, $v\in C^\infty(\mathbb{T}^{n})$, gathering together in (\eqref{eq:B-Est_P_x-2})  all the information
obtained, and assuming $B\ll A$, we get
 \begin{eqnarray*}
\|\Delta_x^{p} u \|&\leq& CA^{p+1}p!^2
 \left(\frac{B^{p+1}}{A^{p+1}} + B\sum_{j=1}^p  \left( \frac{B}{A} \right)^{j} + 
 \tilde{A}\sum_{\ell=1}^p 2^\ell m^\ell \sum_{j=0}^{p-\ell}\left(\frac{B}{A}\right)^{j+\ell/2}
+ \frac{\gamma}{Ap^2}\right)\\
&\leq & CA^{p+1}p!^2\left( \frac{B}{A} + \frac{B^2}{A-B} + \frac{8mB}{A^{1/2}} + \frac{\gamma}{A} \right).
\end{eqnarray*}
If we choose $A>0$ so large such that the term between parentheses is
$\leq 1/C$  our 
inductive argument proves that  
\eqref{Ind_x} indeed holds for $j \leq p$, and every $p$, so $u$ is an analytic-vector for $\Delta_x$.  By the Proposition \ref{Pr_BCM}
we obtain that the analytic wave front set of $u$ is empty. We conclude that $u$ is analytic.\\
%
\subsection{Proof of the Theorem \ref{Th_2}}
%
We consider the problem $Pu=f$, $f\in C^{\omega}(\mathbb{T}^{N})$. 
We recall that in this case $P$ satisfies the assumptions $\mathbf{(A1)}$,
$\mathbf{(A2)}$, with $n_{1} < n$, and $\mathbf{(A3)}$, and has the following form: 
\begin{linenomath}
	\begin{multline}\label{Op_Th2}
	P(t,D_{t},D_{x})=
	\longsum[6]_{j,p =1}^{n_{1}} a_{j,p}(t')D_{t_{j}}D_{t_{p}} 
	+\longsum[8]_{j =n_{1}+1}^{n} a_{j,j}(t') D_{t_{j}}^{2}
	\\
	+\longsum[6]_{j,p =1}^{m} a_{n+j,n+p}(t)D_{x_{j}}D_{x_{\ell}} 
	+2\sum_{j =1}^{n} \sum_{p =1}^{m} a_{j,n+p} (t)D_{t_{j}} D_{x_{p}}
	+ i\sum_{j=1}^{n} b_{j}(t)D_{t_{j}} 
	\\
	+i \sum_{p=1}^{m} c_{p}(t)D_{x_{p}}
	+d(t,x).
	\end{multline}    
\end{linenomath}
Following the same strategy used in the proof of Theorem \ref{Th_1}, we have
\begin{linenomath}
	\begin{equation}\label{eq:Dx_Est}
	\|\Delta_{x}^{p}u \|_{0} \leq C_{1}^{p+1} p!^{2},
	\end{equation}
\end{linenomath}
where $C_{1}$ is suitable large constant.\\
By the above estimate, the Proposition \ref{Pr_BCM} and the Remark \ref{Rmk2}
we observe that only the points of the form $(t', t'',x, 0,\tau'', 0)$ may belong to $WF_{a}(u)$.\\
Thus, in order to show that $u \in C^{\omega}(\mathbb{T}^{N})$,
it is enough to show that there is a positive constant $B$ such that
\begin{linenomath}
	\begin{equation*}
	\| D_{t_{r}}^{q} u\|_{0} \leq B^{q+1}  q!, \qquad \forall q\in \mathbb{Z}_{+} \text{ and } \, r\in \{ n_{1}+1, \dots, n\}.
	\end{equation*}
\end{linenomath}
To do it, the main tool will be the subelliptic estimate.
We replace $u$ by $D^{q}_{t_{r}} u$ in \eqref{eq:B-Est}:
\begin{linenomath}
	\begin{multline}\label{eq:Est_Dt}
	\|D^{q}_{t_{r}} u \|_{\varepsilon}^{2}+ \sum_{j=1}^{N} \| P^{j} D^{q}_{t_{r}} u\|^{2}_{0} +  \sum_{j=1}^{n}\|P_{j}D^{q}_{t_{r}} u\|_{-1}^{2}
	\\
	\leq C \left( \sum_{\ell=0}^{N} \langle E_{\ell}P D^{q}_{t_{r}} u, E_{\ell} D^{q}_{t_{r}} u\rangle + \| D^{q}_{t_{r}} u\|_{0}^{2}\right), 
	\end{multline} 
\end{linenomath}
where we recall that $P^{k}$, $k=1,\dots,N$ and $P_{k}$, $k=1,\dots,n$, are as in \eqref{eq:P_k1}
and \eqref{eq:P_k1} respectively, $E_{0}=1$, $E_{\ell}= D_{t_{\ell}} \Lambda^{-1}$,
$\ell=1,\dots,n$, and $E_{\ell}= D_{x_{\ell-n}} \Lambda^{-1}$, $\ell=n+1,\dots,n+m=N$,
here $\Lambda^{-1}$ denotes the pseudodifferential operator associate
to the symbol $(1+|\tau|^{2} + |\xi|^{2})^{-1/2}$.\\
We begin to estimate the last term on the right hand side.
By the Parseval-Plancherel identity and Young's inequality for products, we have
\begin{linenomath}
	\begin{multline}\label{eq:Est_Rm}
	\| D^{q}_{t_{r}} u\|_{0}^{2} = (2\pi)^{N}\sum_{(\tau,\xi) \in \mathbb{Z}^{N}} |\tau_{r}^{q} \, \widehat{u}(\tau,\xi)|^{2}
	\\
	= (2\pi)^{N} \sum_{\substack{ (\tau,\xi) \in \mathbb{Z}^{N} \\ \|(\tau,\xi)\|\leq q }} |\tau_{r}|^{2q} \, |\widehat{u}(\tau,\xi)|^{2}
	+
	(2\pi)^{N} \sum_{\substack{ (\tau,\xi) \in \mathbb{Z}^{N} \\ \|(\tau,\xi)\| > q }} |\tau_{r}|^{2q} \, |\widehat{u}(\tau,\xi)|^{2}
	\\
	\leq q^{2q} \| u\|_{0}^{2} 
	+ (2\pi)^{N} \sum_{\substack{ (\tau,\xi) \in \mathbb{Z}^{N} \\ \|(\tau,\xi)\| > q }} \left( \delta |\tau_{r}|^{2(q+\varepsilon)}  + \delta^{-\frac{q}{\varepsilon}}\right)
	\, |\widehat{u}(\tau,\xi)|^{2}
	\\
	\leq
	C_{\delta}^{2(q+1)} q^{2q} + \delta \| D^{q}_{t_{r}} u\|_{\varepsilon}^{2},
	\end{multline}
\end{linenomath}
where $\delta$ is a positive real number. Choosing $\delta < (42(N+1)C)^{-1} $, where
$C$ is the constant on the right hand side of \eqref{eq:Est_Dt}, we conclude 
that $ C \| D^{q}_{t_{r}} u\|_{0}^{2}$ can be estimate estimate by a term that gives analytic growth
plus a term that can be absorbed on the right hand side of \eqref{eq:Est_Dt}.\\  
Consider now the first term on the right hand side of \eqref{eq:Est_Dt}. Let $C_{1}$ be a positive constant
such that $\| E_{\ell} \|_{L^{2} \rightarrow L^{2}} \leq C_{1}$.  We have
\begin{linenomath}
	\begin{multline}\label{eq:Est_T_S0}
	|\sum_{\ell=0}^{N} \langle E_{\ell}P D^{q}_{t_{r}} u, E_{\ell} D^{q}_{t_{r}} u\rangle |
	\leq
	\sum_{\ell=0}^{N} |\langle E_{\ell} [P, D^{q}_{t_{r}}] u, E_{\ell} D^{q}_{t_{r}} u\rangle |
	\\
	\hspace{16em}+
	\sum_{\ell=0}^{N} | \langle E_{\ell} D^{q}_{t_{r}}  P u, E_{\ell} D^{q}_{t_{r}} u\rangle |
	\\
	\leq
	\sum_{\ell=0}^{N} |\langle E_{\ell} [P, D^{q}_{t_{r}}] u, E_{\ell} D^{q}_{t_{r}} u\rangle |
	+ C_{1}^{4} (N+1)^{2} \| D^{q}_{t_{r}} f\|_{0}^{2} +  \|D^{q}_{t_{r}} u\|_{0}^{2}
	\\
	\leq
	\sum_{\ell=0}^{N} |\langle E_{\ell} [P, D^{q}_{t_{r}}] u, E_{\ell} D^{q}_{t_{r}} u\rangle |
	+ C_{2}^{2(q+1)} q!^{2} + \|D^{q}_{t_{r}} u\|_{0}^{2},
	\end{multline}
\end{linenomath}
where $C_{2}$ is a suitable positive constant.
Using \eqref{eq:Est_Rm}, the last term gives an analytic growth rate
plus a term that can be absorbed on the left side of \eqref{eq:Est_Dt}.\\
Let us examine a single term in the sum.
Since $r \in \{n_{1}+1, \dots, n \}$, we have
 \begin{linenomath}
 	\begin{multline}\label{eq:Est_T_S1}
 	|\langle  E_{\ell} [P, D^{q}_{t_{r}}] u,  E_{\ell} D^{q}_{t_{r}} u\rangle |
 	\\
 	\leq 
 	\sum_{\nu =1}^{q} \longsum[7]_{j,p = 1}^{m}  \binom{q}{\nu} 
 	|\langle E_{\ell} \left( D_{t_{r}}^{\nu} a_{n+j,n+p}(t)\right)  D^{q-\nu}_{t_{r}} D_{x_{j}} D_{x_{p}} u, E_{\ell} D^{q}_{t_{r}} u\rangle |
 	\\
 	+2 \sum_{\nu =1}^{q}
 	\sum_{j =1}^{n_{1}} \sum_{p =1}^{m} \binom{q}{\nu} 
 	|\langle E_{\ell} \left( D_{t_{r}}^{\nu} a_{j,n+p} (t) \right)  D^{q-\nu}_{t_{r}} D_{t_{j}} D_{x_{p}} u, E_{\ell}D^{q}_{t_{r}} u\rangle |
 	\\
 	+
 	2 \sum_{\nu =1}^{q}
 	\longsum[7]_{j =n_{1}+1}^{n} \sum_{p =1}^{m} \binom{q}{\nu} 
 	|\langle E_{\ell} \left( D_{t_{r}}^{\nu} a_{j,n+p} (t) \right)  D^{q-\nu}_{t_{r}} D_{t_{j}} D_{x_{p}} u,  E_{\ell} D^{q}_{t_{r}} u\rangle |
 	\\
 	+ 
 	\sum_{\nu =1}^{q} \longsum[7]_{j=n_{1}+1 }^{n} 
 	\binom{q}{\nu} 
 	|\langle E_{\ell} \left( D_{t_{r}}^{\nu} b_{j}(t) \right)  D^{q-\nu}_{t_{r}} D_{t_{j}} u, E_{\ell} D^{q}_{t_{r}} u\rangle |
 	\\
 	 +
 	 \sum_{\nu =1}^{q} \sum_{p=1}^{m}
 	 \binom{q}{\nu} 
 	 |\langle E_{\ell} \left( D_{t_{r}}^{\nu} c_{p}(t)\right) D^{q-\nu}_{t_{r}} D_{x_{p}}u, E_{\ell} D^{q}_{t_{r}} u\rangle |
 	 \\
 	 +
 	 \sum_{\nu =1}^{q} \sum_{j=1}^{n_{1}} 
 	 \binom{q}{\nu} 
 	 |\langle  E_{\ell} \left( D_{t_{r}}^{\nu} b_{j}(t) \right)  D^{q-\nu}_{t_{r}} D_{t_{j}} u, E_{\ell} D^{q}_{t_{r}} u\rangle |
 	 \\
 	 +
 	 \sum_{\nu =1}^{q}
 	 \binom{q}{\nu} 
 	 |\langle E_{\ell} \left( D_{t_{r}}^{\nu}   d(t,x)  \right) D^{q-\nu}_{t_{r}} u, E_{\ell} D^{q}_{t_{r}} u\rangle |
 	 =\sum_{i=1}^{7} J_{i}.
 	\end{multline}
 \end{linenomath}
Let us examine each terms separately.\\
\textbf{Term $J_{7}$}: we have
\begin{linenomath}
	\begin{multline*}
	J_{7} = \sum_{\nu =1}^{q}
	\binom{q}{\nu} 
	|\langle E_{\ell} \left( D_{t_{r}}^{\nu}   d(t,x)  \right) D^{q-\nu}_{t_{r}} u, E_{\ell} D^{q}_{t_{r}} u\rangle |
	\\
	\leq
	C_{1}^{2} 
	 \sum_{\nu =1}^{q}
	\binom{q}{\nu} 
	\|\left( D_{t_{r}}^{\nu}   d(t,x)  \right) D^{q-\nu}_{t_{r}} u\|_{0} \| D^{q}_{t_{r}} u\|_{0}
	\\
	\leq
	C_{1}^{2} C_{d}\sum_{\nu =1}^{q}
	\binom{q}{\nu}  C_{d}^{\nu} \nu!  \| D^{q-\nu}_{t_{r}} u\|_{0} \| D^{q}_{t_{r}} u\|_{0}
	\leq
	\sum_{\nu =1}^{q}
	 \widetilde{C}_{d}^{\nu} q^{\nu}  \| D^{q-\nu}_{t_{r}} u\|_{0} \| D^{q}_{t_{r}} u\|_{0}.
	\end{multline*}
\end{linenomath} 
By the Parseval-Plancherel identity and Young's inequality for products the factor
$\widetilde{C}_{d}^{\nu} q^{\nu}  \| D^{q-\nu}_{t_{r}} u\|_{0}  $   can be bounded by
$ 2^{-\nu} \| D^{q}_{t_{r}} u\|_{0} + \widetilde{C}_{d}^{q} q^{q}  \|  u\|_{0}$. 
We obtain
\begin{linenomath}
	\begin{multline}\label{Est:J8}
	J_{7} 
	\leq
	\left(
	\sum_{\nu =1}^{q} \left(\frac{1}{2}\right)^{\nu} \| D^{q}_{t_{r}} u\|_{0}^{2}
	+
	\sum_{\nu =1}^{q} 2^{q-\nu} \widetilde{C}_{d}^{q} q^{q}  \|  u\|_{0} \| D^{q}_{t_{r}} u\|_{0}
	\right)
	\\
	\leq
	2 \| D^{q}_{t_{r}} u\|_{0}^{2} +   \left( 2 \widetilde{C}_{d}\right)^{2q} q^{2q} \| u\|_{0}^{2}
	\\
	\leq
	2 \| D^{q}_{t_{r}} u\|_{0}^{2} + C_{3}^{2(q+1)}  q!^{2}, 
	\end{multline}
\end{linenomath} 
where $C_{3}$ is a suitable positive constants independent of $q$.
Here, we used that $\sum_{\nu =1}^{q} 2^{-\nu} \leq \sum_{\nu =0}^{\infty} 2^{-\nu} -1\leq 1$.
Using \eqref{eq:Est_Rm}, we conclude that $J_{7}$ can be estimate by
an analytic growth rate
plus a term that can be absorbed on the left side of \eqref{eq:Est_Dt}.

\textbf{Term $J_{6}$} on the right hand side of $\eqref{eq:Est_T_S1}$:
\begin{linenomath}
	\begin{multline}\label{eq:J_6-1}
	J_{6}=\sum_{\nu =1}^{q} \sum_{j=1}^{n_{1}} 
	\binom{q}{\nu} 
	|\langle  E_{\ell} \left( D_{t_{r}}^{\nu} b_{j}(t) \right)  D^{q-\nu}_{t_{r}} D_{t_{j}} u, E_{\ell} D^{q}_{t_{r}} u\rangle |
	\\
	\leq
	C_{1}^{2}
	\sum_{\nu =1}^{q} \sum_{j=1}^{n_{1}} 
	C_{b}^{\nu+1} q^{\nu}
	\| D_{t_{j}} D^{q-\nu}_{t_{r}} u\|_{0} \| D^{q}_{t_{r}} u\|_{0}
	\\
	\leq
	n_{1} C_{1}^{2} \sum_{\nu =1}^{q} \sum_{j=1}^{n_{1}} 2^{\nu} C_{b}^{2(\nu+1)} q^{2\nu}
	\| D_{t_{j}} D^{q-\nu}_{t_{r}} u\|_{0}^{2}
	+
	\sum_{\nu =1}^{q} 2^{-\nu}  \| D^{q}_{t_{r}} u\|_{0}
	\\
	\leq
	n_{1} C_{1}^{2} \sum_{\nu =1}^{q-1} \sum_{j=1}^{n_{1}}  (2C_{b})^{2(\nu+1)} q^{2\nu}
	\| D_{t_{j}} D^{q-\nu}_{t_{r}} u\|_{0}^{2}
	\\
	+
	n_{1} C_{1}^{2}  (2C_{b})^{2(q+1)} q^{2q}\sum_{j=1}^{n_{1}}  
	\| D_{t_{j}} u\|_{0}^{2}
	+ \| D^{q}_{t_{r}} u\|_{0},
	\end{multline}
\end{linenomath}
where $C_{b} $ is a suitable positive constant depending of the $b_{j}(t)$. 
Using \eqref{eq:Est_Rm}, the last two terms give analytic growth
plus a term that can be absorbed on the left side of \eqref{eq:Est_Dt}.
In order to estimate the terms in the first sum, $\| D_{t_{j}} D^{q-\nu}_{t_{r}} u\|_{0}$,
we take advantage by the assumptions $\mathbf{(A2)}$ and $\mathbf{(A3)}$.
More precisely, as pointed out in the Remark \ref{Rmk1}, we have that $\widetilde{P}^{k}$, \eqref{eq:Pkt},
$k=1,\dots, n_{1}$, are linearly independent, thus,  for every $j$, we have 
\begin{linenomath}
	\begin{multline}\label{Dtj_1n}
	D_{t_{j}} \!= \! \sum_{k=1}^{n_{1}}\! \text{\textcursive{p}}_{k,j}(t') \widetilde{P}^{k}(t',D_{t'})\!
	= \sum_{k=1}^{n_{1}} \! \text{\textcursive{p}}_{k,j}(t') \! \left( \!P^{k}(t',D_{t'}) \!-2\!  \sum_{p =1}^{m} a_{n+p,k}(t) D_{x_{p}} \!\right),
	\end{multline}
\end{linenomath}  
where we use that  $\widetilde{P}^{k} = P^{k} -2 \sum_{p =1}^{m} a_{n+p,k}(t) D_{x_{p}}$, $P^{k}$ as in \eqref{eq:P_k1}, $k=1,\dots, n_{1}$.
So, the first term on the right hand side of \eqref{eq:J_6-1} can be estimated by
\begin{linenomath}
	\begin{multline}\label{eq:J_6-2}
	n_{1}^{2} C_{1}^{2} 
	\sum_{\nu =1}^{q-1} \sum_{j=1}^{n_{1}} 
	\sum_{k=1}^{n_{1}}
	 (2C_{b})^{2(\nu+1)} q^{2\nu}
	\|\text{\textcursive{p}}_{k,j}(t') P^{k} D^{q-\nu}_{t_{r}} u\|_{0}^{2}
	\\
	+
	2n_{1}^{2}m C_{1}^{2} \sum_{\nu =1}^{q-1} \sum_{j=1}^{n_{1}} 
	\sum_{k=1}^{n_{1}}  \sum_{p =1}^{m}
	 (2C_{b})^{2(\nu+1)} q^{2\nu}
	\| \text{\textcursive{p}}_{k,j}(t') a_{n+p,k}(t) D_{x_{p}} D^{q-\nu}_{t_{r}} u\|_{0}^{2}
	\\
	\leq
	n_{1}^{3} C_{1}^{2} C_{ \text{\textcursive{p}} }
	\sum_{\nu =1}^{q-1} 
	\sum_{k=1}^{n_{1}}
	(2C_{b})^{2(\nu+1)} q^{2\nu}
	\| P^{k} D^{q-\nu}_{t_{r}} u\|_{0}^{2}
	\\
	+
	2n_{1}^{3}m C_{1}^{2} C_{ \text{\textcursive{p}} }\sum_{\nu =1}^{q-1}
	\sum_{k=1}^{n_{1}}  \sum_{p =1}^{m}
	(2C_{b})^{2(\nu+1)} q^{2\nu}
	\| a_{n+p,k}(t) D_{x_{p}} D^{q-\nu}_{t_{r}} u\|_{0}^{2},
	\end{multline}
\end{linenomath}
where $C_{ \text{\textcursive{p}} }= \displaystyle\sup_{k,j}\sup_{t'}| \text{\textcursive{p}}_{k,j}(t') | $.
We handle the two terms obtained on the right hand side separately. Concerning the first term,
we observe that $\nu$ $t_{r}$-derivatives were turned into a factor $C^{\nu+1} q^{\nu}$, $C$ suitable
positive constant independent of $q$ and $\nu$. Taking maximum advantage from the subelliptic estimate,
\eqref{eq:B-Est_P}, we can restart the process with $u $ replaced by $C^{\nu+1} q^{\nu}  D^{q-\nu}_{t_{r}} u$.
Iterating the process $(q-\nu)$-times, modulo terms which give analytic growth or terms 
of the same form as in the right hand side of \eqref{eq:Est_T_S1}, $J_{1},\dots, J_{5}$, which we will see how to deal with later,
we obtain the term
\begin{linenomath}
	\begin{equation*}
	C_{2}^{2(q+1)} q!^{2}
	\| P^{k}  u\|_{0}^{2},
	\end{equation*}
\end{linenomath}
i.e. analytic growth. Here $C_{2}$ is a suitable positive constant independent of $q$.\\
To handle the second term on the right hand side of \eqref{eq:J_6-2},
we take advantage by the assumption $\mathbf{(A3)}$,
\eqref{A3_1} and \eqref{A3_2}:
\begin{linenomath}
	\begin{equation*}
	|a_{n+p,k}(t)| = |a_{k,n+p}(t)|  \leq a_{k,k}(t') +a_{n+p,n+p}(t) \leq 2C_{\star} a_{r,r}(t'). 
	\end{equation*}
\end{linenomath}
Since $P^{r} = 2a_{r,r}(t') D_{t_{r}} + 2 \sum_{p=1}^{m} a_{n+p,r}(t) D_{x_{p}}$, we have
\begin{linenomath}
	\begin{multline*}
	 \sum_{\nu =1}^{q-1}
	\sum_{k=1}^{n_{1}}  \sum_{p =1}^{m}
	\widetilde{C}_{b}^{2(\nu+1)} q^{2\nu}
	\| a_{n+p,k}(t) D_{x_{p}} D^{q-\nu}_{t_{r}} u\|_{0}^{2}
	\\
	\hspace{-3em}
	\leq
	\sum_{\nu =1}^{q-1}
	\sum_{p =1}^{m}
	n_{1} 2^{2}C_{\star}^{2}\widetilde{C}_{b}^{2(\nu+1)} q^{2\nu}
	\| P^{r}  D^{q-\nu-1}_{t_{r}} D_{x_{p}} u\|_{0}^{2}
	\\
	+
	\sum_{\nu =1}^{q-2}
	\sum_{p_{1} =1}^{m} \sum_{p_{2} =1}^{m}
	n_{1}2^{2} C_{\star}^{2}\widetilde{C}_{b}^{2(\nu+1)} q^{2\nu}
	\| a_{n+p_{2},r}(t) D^{q-\nu-1}_{t_{r}} D_{x_{p_{2}}} D_{x_{p_{1}}} u\|_{0}^{2}
	\\
	+
	2^{2}C_{\star}^{2}\widetilde{C}_{b}^{2q} q^{2(q-1)}
	\sum_{p_{1} =1}^{m} \sum_{p_{2} =1}^{m}
	\| a_{n+p_{2},r}(t) D_{x_{p_{2}}} D_{x_{p_{1}}} u\|_{0}^{2}
	.
	\end{multline*}
\end{linenomath}
We remark that the last term gives analytic growth;
moreover, in the terms of the first sum, $\nu$ $t_{r}$-derivatives where turned
into a factor $q^{\nu}$ and one derivative it was turned into a $x$-derivative. 
Iterating the process, we obtain 
\begin{linenomath}
	\begin{multline}\label{eq:J_6-3}
	\sum_{\nu =1}^{q-1}
	\sum_{k=1}^{n_{1}}  \sum_{p =1}^{m}
	\widetilde{C}_{b}^{2(\nu+1)} q^{2\nu}
	\| a_{n+p,k}(t) D_{x_{p}} D^{q-\nu}_{t_{r}} u\|_{0}^{2}
	\\
	\hspace{-3em}
	\leq
	\sum_{i=1}^{q-1}\sum_{\nu =1}^{q-i}
	\sum_{p_{1} =1}^{m}\cdots \sum_{p_{i} =1}^{m}
	 C_{\star}^{2i}\widetilde{C}_{b}^{2(\nu+1)} q^{2\nu}
	\| P^{r}  D^{q-\nu-i}_{t_{r}} D_{x_{p_{1}}}\cdots D_{x_{p_{i}}}u\|_{0}^{2}
	\\
	+
	\sum_{i =1}^{q-1}
	\sum_{p_{1} =1}^{m}\cdots \sum_{p_{i+1} =1}^{m}
	C_{\star}^{2i}\widetilde{C}_{b}^{2(q-i+1)} q^{2(q-i)}
	\| D_{x_{p_{1}}} \cdots D_{x_{p_{i+1}}} u\|_{0}^{2}.
	\end{multline}
\end{linenomath}
Since we know that $u$ has analytic growth with respect $x$-derivatives,
we have 
\begin{linenomath}
	\begin{equation*}
	\sum_{i =1}^{q-1}
	\sum_{p_{1} =1}^{m}\cdots \sum_{p_{i+1} =1}^{m}
	C_{\star}^{2i}\widetilde{C}_{b}^{2(q-i+1)} q^{2(q-i)}
	\| D_{x_{p_{1}}} \cdots D_{x_{p_{i+1}}} u\|_{0}^{2}
	\leq C_{3}^{2(q+1)} q^{2q},
	\end{equation*}
\end{linenomath}
where $C_{3}$ is a suitable positive constant.\\
Concerning the terms in the first sum on the right hand side of \eqref{eq:J_6-3},
we can take maximum advantage by the subelliptic estimate, \eqref{eq:B-Est_P}, restarting
the process with $u$ replaced by  $C_{\star}^{i}\widetilde{C}_{b}^{\nu+1} q^{\nu}
D^{q-\nu-i}_{t_{r}} D_{x_{p_{1}}}\cdots D_{x_{p_{i}}}u$. After $s$-times,
modulo terms which give analytic growth or terms 
of the same form as in the right hand side of \eqref{eq:Est_T_S1}, $J_{1},\dots, J_{5}$, 
we will face of with terms of the form
\begin{linenomath}
	\begin{equation}\label{eq:J_6-iter_0}
	C_{\star}^{2\sum_{k=1}^{s}i_{k}}
	\widetilde{C}_{b}^{2(\sum_{k=1}^{s}(\nu_{k}+1))} q^{2\sum_{k=1}^{s}\nu_{k}}
	\| P^{r}  D^{q-\sum_{k=1}^{s}\nu_{k}-\sum_{k=1}^{s}i_{k}}_{t_{r}} D_{I_{1}}\cdots D_{I_{s}}u\|_{0}^{2},
	\end{equation}
\end{linenomath}
where $\nu_{k} \leq q-\nu_{k-1}-i_{k-1}$ and $D_{I_{k}}= D_{x_{p_{1}}}\cdots D_{x_{p_{i_{k}}}}$, $|I_{k}|=i_{k}$, $k=1,\dots,s$.
After a suitable number of steps so that  $q-\sum_{k=1}^{s}\nu_{k}-\sum_{k=1}^{s}i_{k}=0$,
we will obtain
 \begin{linenomath}
 	\begin{equation}\label{eq:J_6-iter}
 	C_{\star}^{2\sum_{k=1}^{s}i_{k}}
 	\widetilde{C}_{b}^{2( q-\sum_{k=1}^{s}i_{k}+1)} q^{2  q-\sum_{k=1}^{s}i_{k} }
 	\| P^{r} D_{I_{1}}\cdots D_{I_{s}}u\|_{0}^{2}.
 	\end{equation}
 \end{linenomath}
By \eqref{eq:Dx_Est} and since $\sum_{k=1}^{s}|I_{k}|=\sum_{k=1}^{s}i_{k}$,  we conclude that
also these terms give analytic growth.

\textbf{Term $J_{5}$} on the right hand side of \eqref{eq:Est_T_S1}:
\begin{linenomath}
	\begin{multline*}
	J_{5}=\sum_{\nu =1}^{q} \sum_{p=1}^{m}
	\binom{q}{\nu} 
	|\langle E_{\ell} \left( D_{t_{r}}^{\nu} c_{p}(t)\right) D^{q-\nu}_{t_{r}} D_{x_{p}}u, E_{\ell} D^{q}_{t_{r}} u\rangle |
	\\
	\leq
	C_{1}^{2} m \sum_{\nu =1}^{q} \sum_{p=1}^{m}
	\left(\frac{q!}{\nu!(q-\nu)!}\right)^{2} 2^{\nu}
	 \| \left( D_{t_{r}}^{\nu} c_{p}(t)\right) D^{q-\nu}_{t_{r}} D_{x_{p}}u\|_{0}^{2}  
	 +
	 \sum_{\nu =1}^{q}
	 \left( \frac{1}{2} \right)^{\nu}
	  \| D^{q}_{t_{r}} u\|_{0}^{2}
	  \\
	  \leq
	  C_{1}^{2} m \sum_{\nu =1}^{q-1} \sum_{p=1}^{m}
	  \left(\frac{q!}{\nu!(q-\nu)!}\right)^{2} 2^{\nu}
	  \| \left( D_{t_{r}}^{\nu} c_{p}(t)\right) D^{q-\nu}_{t_{r}} D_{x_{p}}u\|_{0}^{2}  
	  +
	  \| D^{q}_{t_{r}} u\|_{0}^{2}
	  \\
	  +
	  C_{1}^{2} m C_{c}^{2(q+1)} q!^{2}\sum_{p=1}^{m} \| D_{x_{p}}u\|_{0}^{2} .
	\end{multline*}
\end{linenomath}
The last term term gives analytic growth.
Using \eqref{eq:Est_Rm}, the second term gives analytic growth
plus a term that can be absorbed on the left side of \eqref{eq:Est_Dt}.
In order to estimate the terms in the sum
we take advantage from the assumption $\mathbf{(A3)}$, \eqref{A3_1}:
\begin{linenomath}
	\begin{equation*}
	| c_{p}(t) | \leq C_{\star} a_{r,r}(t').
	\end{equation*}
\end{linenomath}
We distinguish two cases. If $a_{r,r}(t') >0$, we are reduced to a situation analogous
to the one already studied to estimate $J_{6}$;
indeed the addends of the sum can be estimated as follows
\begin{linenomath}
	\begin{multline}\label{J5_ell}
	\left(\frac{q!}{\nu!(q-\nu)!}\right)^{2} 2^{\nu}
	\| \left( D_{t_{r}}^{\nu} c_{p}(t)\right) D^{q-\nu}_{t_{r}} D_{x_{p}}u\|_{0}^{2}
	\\
	\leq
	C_{c}^{2(\nu+1)} q^{2\nu} \left( \| P^{r}  D^{q-\nu-1}_{t_{r}} D_{x_{p}}u\|_{0}^{2}
	+
	\sum_{p_{2}=1}^{m} \| a_{n+p_{2},r}(t)  D^{q-\nu-1}_{t_{r}} D_{x_{p_{2}}}D_{x_{p}}\|_{0}^{2}\right).
	\end{multline}
\end{linenomath} 
We can use the same strategy adopted to handle the last term in \eqref{eq:J_6-2}, see also \eqref{eq:J_6-3}.
So, also in this case, modulo terms that give analytic growth, taking advantage from the subelliptic estimate
and iterating the process we will obtain again analytic growth,
of course modulo terms of the same form of $J_{1},\dots, J_{4}$, right hand side of \eqref{eq:Est_T_S1}.\\
Now, we assume that $a_{r,r}^{-1}(0) \neq \emptyset$. In order to discuss this case we use a partition
of unity in $\mathbb{T}^{n}$.
Let $\bigcup_{s =1}^{M} \Omega_{s}$ be a finite open chart covering of $\mathbb{T}^{n}$
and $\rho_{s}(t)$ in  $ C_{0}^{\infty}\left( \Omega_{s}; [0,1]\right)$ such that
$\sum_{s =1}^{M} \rho_{s}(t) =1$ be a subordinate partition of unit.\\
Using the partition of unit we have to estimate  $\| \rho_{s}(t) \left( D_{t_{r}}^{\nu} c_{p}(t)\right) D^{q-\nu}_{t_{r}} D_{x_{p}}u\|$
, $ s=1,\dots,M$.
If $\Omega_{s} \cap a_{r,r}^{-1}(0) = \emptyset $ we proceed as before, \eqref{J5_ell}.\\
We assume that $\Omega_{s} \cap a_{r,r}^{-1}(0) \neq \emptyset $.
Without loss of generality and for seek of simplicity we assume that the origin belongs to $\Omega_{s}$,
$ a_{r,r}(0)=0$ and $a_{r,r}(t_{1},0) $ has an isolated zero of multiplicity $k_{r}$ at zero.
We point out that the estimate that we will obtain is independent by $s$.\\
By the Weierstrass  preparation theorem (see for example \cite{Treves2022}) we may write
\begin{linenomath}
	\begin{equation}\label{eq:arr}
	\hspace*{-0.5em}
	a_{r,r}(t')=\widetilde{a}_{r,r}(t')\text{\textcursive{p}}_{r}(t_{1},\widetilde{t}\, ),	\text{ where }\quad 
	\text{\textcursive{p}}_{r}(t_{1},\widetilde{t}\,)= t_{1}^{k_{r}}+ \sum_{\nu=1}^{k_{r}} \text{\textcursive{p}}_{r,\nu}(\widetilde{t}\,)t_{1}^{\nu},
	\end{equation}
\end{linenomath}
where $ \widetilde{a}_{r,r}(t')$ is an analytic function on $\Omega_{s}$ nowhere vanishing
and $\text{\textcursive{p}}_{r,\nu}(\widetilde{t}\,)$
are analytic function vanishing at $\widetilde{t}=0$, $\widetilde{t}= (t_{2},\dots,t_{n_{1}})$.\\
We consider now $c_{p}(t_{1},0,t'')$; by the assumption $\mathbf{(A3)}$, \eqref{A3_1},
it vanishes at $t_{1}=0$ for every $t''$.
We may always perform a small linear change of the variables $ t' $ in such a
way that both $ a_{r,r}(t_{1}, 0) $ and  $ c_{p}(t_{1}, 0, t'') $
have an isolated zero at $ t_{1} = 0 $.
By the Weierstrass preparation theorem we may write
\begin{linenomath}
\begin{equation}\label{eq:cp_1}
c_{p}(t) = \widetilde{c}_{p}(t', t'')\, \text{\textcursive{q}}_{p}\!(t_{1}, \widetilde{t}, t''), 
\text{ where }
\text{\textcursive{q}}_{p}\!(t_{1}, \tilde{t}, t'') =  t_{1}^{h_{p}}\! + \! 
\sum_{\mu=1}^{h_{p}}      \text{\textcursive{q}}_{p,\mu}\!(t_{1}, \widetilde{t}, t'') t_{1}^{h_{p}-\mu}.
\end{equation} 
\end{linenomath}
By the assumption \eqref{A3_1}: $ | c_{p} (t)| \leq C_{\star} | a_{r,r}(t') | $; it follow that
$ |\text{\textcursive{q}}_{p} | \leq \widetilde{C}_{\star} | \text{\textcursive{p}}_{r}| $.
This implies that $ k_{r}\leq h_{p}$ and that $ \text{\textcursive{p}}_{r} $ divides $ \text{\textcursive{q}}_{p} $, 
i.e.
\begin{linenomath}
	\begin{equation}
	\label{eq:cp_2}
	c_{p}(t) = \widetilde{c}_{p}(t', t'') \text{\textcursive{p}}_{r} (t_{1}, \widetilde{t}\,)
	\widetilde{\text{\textcursive{q}}}_{p,r}(t_{1}, \widetilde{t}, t''),
	\end{equation}
\end{linenomath}
where $ \widetilde{\text{\textcursive{q}}}_{p,r}$ denotes a Weierstrass polynomial of degree $ h_{p}-k_{r} $.\\
We note explicitly that the dependence on $ t'' $ of the functions $c_{p} $ is confined to the factors
$\widetilde{c}_{p}  $ and $\widetilde{\text{\textcursive{q}}}_{p,r} $.
We have that
\begin{linenomath}
	\begin{multline}
	\label{eq:Dcp}
	D_{t_{r}}^{\nu} c_{p}(t) =
	\left[ D_{t_{r}}^{\nu}  (\widetilde{c}_{p}(t', t'') \widetilde{\text{\textcursive{q}}}_{p,r}(t_{1}, \widetilde{t}, t'') ) \right] 
	\text{\textcursive{p}}_{r} (t_{1}, \widetilde{t}\,)
	\\
	=
	\left[ D_{t_{r}}^{\nu}  (\widetilde{c}_{p}(t', t'') \widetilde{\text{\textcursive{q}}}_{p,r}(t_{1}, \widetilde{t}, t'') ) \right] 
	\left[\widetilde{a}_{r,r}(t')\right]^{-1} a_{r,r}(t').
	\end{multline}
\end{linenomath}
By the previous considerations we have
\begin{linenomath}
	\begin{multline}
	\left(\frac{q!}{\nu!(q-\nu)!}\right)^{2} 2^{\nu}
	\| \rho_{s}(t)\left( D_{t_{r}}^{\nu} c_{p}(t)\right) D^{q-\nu}_{t_{r}} D_{x_{p}}u\|_{0}^{2}
	\\
	=
	\frac{q!^{2} 2^{\nu}}{\nu!^{2}(q-\nu)!^{2}} 
	\| \rho_{s}(t)     \left( D_{t_{r}}^{\nu}  (\widetilde{c}_{p}(t) \widetilde{\text{\textcursive{q}}}_{\,p,r}(t) ) \right) 
	\left[\widetilde{a}_{r,r}(t')\right]^{-1} a_{r,r}(t') D_{t_{r}} D^{q-\nu-1}_{t_{r}} D_{x_{p}}u\|_{0}^{2}
	\\
	\leq
	\widetilde{C}_{c}^{2(\nu+1)} q^{2\nu} \left( \| P^{r}  D^{q-\nu-1}_{t_{r}} D_{x_{p}}u\|_{0}^{2}
	+
	\sum_{p_{2}=1}^{m} \| a_{n+p_{2},r}(t)  D^{q-\nu-1}_{t_{r}} D_{x_{p_{2}}}D_{x_{p}}\|_{0}^{2}\right),
	\end{multline}
\end{linenomath}
where we used that $P^{r} = 2a_{r,r}(t') D_{t_{r}} + 2 \sum_{p=1}^{m} a_{n+p,r}(t) D_{x_{p}}$, 
$\widetilde{C}_{c}$ is a suitable positive constant.
We obtain
\begin{linenomath}
	\begin{multline*}
	J_{5}
	\leq
	C_{1}^{2} m^{2} \sum_{\nu =1}^{q-1} 
	\widetilde{C}_{c}^{2(\nu+1)} q^{2\nu}  \| P^{r}  D^{q-\nu-1}_{t_{r}} D_{x_{p}}u\|_{0}^{2}
	\\
	+
	C_{1}^{2} m \sum_{\nu =1}^{q-1} 
	\sum_{p_{1}=1}^{m}
	\sum_{p_{2}=1}^{m} 
	\widetilde{C}_{c}^{2(\nu+1)} q^{2\nu}
	\| a_{n+p_{2},r}(t)  D^{q-\nu-1}_{t_{r}} D_{x_{p_{1}}} D_{x_{p_{1}}}\|_{0}^{2}
	\\
	+
	 C_{2}^{2(q+1)} q!^{2} \| D_{x_{p}}u\|_{0}^{2}
	+
	\| D^{q}_{t_{r}} u\|_{0}^{2}.
	\end{multline*}
\end{linenomath}
The third term gives analytic growth rate; using \eqref{eq:Est_Rm}, the last term gives analytic growth
plus a term that can be absorbed on the left side of \eqref{eq:Est_Dt}. 
About the first term on the right hand side,
we remark that $\nu+1$ $t_{r}$-derivatives were turned into a factor $q^{\nu}$ and 
one derivative in $x$.
Also in this case, we can use the same strategy adopted to handle
the last term in \eqref{eq:J_6-2}, see also \eqref{eq:J_6-3}.
So, taking advantage from the subelliptic estimate
and iterating the process we will obtain terms which give analytic growth,
of course modulo terms that give analytic growth
and of the same form of $J_{1},\dots, J_{4}$, right hand side of \eqref{eq:Est_T_S1}.

\textbf{Term $J_{4}$} on the right hand side of \eqref{eq:Est_T_S1}. 
To bound this term we will adopt the same strategies used to estimate the term $J_{5}$.
We have
\begin{linenomath}
	\begin{multline*}
	J_{4}=\sum_{\nu =1}^{q} \longsum[7]_{j=n_{1}+1 }^{n} 
	\binom{q}{\nu} 
	|\langle E_{\ell} \left( D_{t_{r}}^{\nu} b_{j}(t) \right)  D^{q-\nu}_{t_{r}} D_{t_{j}} u, E_{\ell} D^{q}_{t_{r}} u\rangle |
	\\
	\leq
	C_{1}^{2}\sum_{\nu =1}^{q} \longsum[7]_{j=n_{1}+1 }^{n} 
	\binom{q}{\nu} 
	\| \left( D_{t_{r}}^{\nu} b_{j}(t) \right)  D^{q-\nu}_{t_{r}} D_{t_{j}} u\|_{0} \| D^{q}_{t_{r}} u\|_{0}
	\\
	\leq
	C_{1}^{2} n \sum_{\nu =1}^{q-1} \longsum[7]_{j=n_{1}+1 }^{n} 
	\left(\frac{q!}{\nu!(q-\nu)!}\right)^{2} 2^{\nu}
	\| \left( D_{t_{r}}^{\nu} b_{j}(t)\right) D^{q-\nu}_{t_{r}} D_{t_{j}}u\|_{0}^{2}  
	\\
	+
	 C_{2}^{2(q+1)} q!^{2} \longsum[7]_{j=n_{1}+1 }^{n} \| D_{t_{j}}u\|_{0}^{2} 
	 +
	 \| D^{q}_{t_{r}} u\|_{0}^{2},
	\end{multline*}
\end{linenomath}
where $C_{2}$ is a suitable positive constant. 
In order to estimate the $L^{2}$-norms in the first sum
we take advantage by the assumption $\mathbf{(A3)}$, \eqref{A3_1} and \eqref{A3_2}:
\begin{linenomath}
	\begin{equation*}
	| b_{j}(t) | \leq C_{\star} a_{j,j}(t').
	\end{equation*}
\end{linenomath}
As before, we need a partition of unity in $\mathbb{T}^{n}$.
We are gong to estimate 
\begin{linenomath}
	$$
	 \|\rho_{s}(t) \left( D_{t_{r}}^{\nu} b_{j}(t)\right) D^{q-\nu}_{t_{r}} D_{t_{j}}u\|_{0}^{2} .
	 $$
\end{linenomath}
Since we use the same argument used in the case $J_{5}$, see \eqref{eq:arr}, \eqref{eq:cp_1} and \eqref{eq:cp_2},
we don't sink in details. In $\Omega_{s}$ we may write
\begin{linenomath}
	\begin{equation}
	\label{eq:bj_1}
	b_{j}(t) = \widetilde{b}_{j}(t', t'') \text{\textcursive{p}}_{j} (t_{1}, \widetilde{t}\,)
	\widetilde{\text{\textcursive{q}}}_{j,j}(t_{1}, \widetilde{t}, t''),
	\end{equation}
\end{linenomath}
where $\widetilde{b}_{j}(t)$ is an analytic function no where vanishing in $\Omega_{s}$,
$\text{\textcursive{p}}_{j} (t_{1}, \widetilde{t}\,) $ is of the same form as in \eqref{eq:arr}
and $\widetilde{\text{\textcursive{q}}}_{j,j}(t_{1}, \widetilde{t}, t'')$ 
denotes a Weierstrass polynomial of suitable degree.
Since the dependence of $b_{j}(t)$ with respect to $t''$ is confined to the factor
$\widetilde{b}_{j}(t)$ and $\widetilde{\text{\textcursive{q}}}_{j,j}(t)$, we have 
\begin{linenomath}
	\begin{equation}
	\label{eq:Dbj1}
	D_{t_{r}}^{\nu} b_{j}(t) =
	\left[ D_{t_{r}}^{\nu}  (\widetilde{b}_{j}(t', t'') \widetilde{\text{\textcursive{q}}}_{j,j}(t_{1}, \widetilde{t}, t'') ) \right] 
	\left[\widetilde{a}_{j,j}(t')\right]^{-1} a_{j,j}(t'),
	\end{equation}
\end{linenomath}
where $ \widetilde{a}_{j,j}(t') $ is an analytic function no where vanishing in $\Omega_{s}$
as $\widetilde{a}_{r,r}(t')$ in \eqref{eq:arr}.\\
We obtain
\begin{linenomath}
	\begin{multline*}
	 \longsum[7]_{j=n_{1}+1 }^{n} 
	\left(\frac{q!}{\nu!(q-\nu)!}\right)^{2} 2^{\nu}
	\| \rho_{s}(t)\left( D_{t_{r}}^{\nu} b_{j}(t)\right) D^{q-\nu}_{t_{r}} D_{t_{j}}u\|_{0}^{2}  
	\\
	=
	 \longsum[7]_{j=n_{1}+1 }^{n} 
	\left(\frac{q!}{\nu!(q-\nu)!}\right)^{2}\! 2^{\nu}
	\| \rho_{s}(t) \!\left[ D_{t_{r}}^{\nu}  (\widetilde{b}_{j}(t) \widetilde{\text{\textcursive{q}}}_{j,j}(t) ) \right] 
	\left[\widetilde{a}_{j,j}(t')\right]^{-1} a_{j,j}(t') D_{t_{j}} D^{q-\nu}_{t_{r}} u\|_{0}^{2} 
	\\
	\leq
	\longsum[7]_{j=n_{1}+1 }^{n} 2^{\nu} C^{2(\nu+1)}_{b} q^{2\nu} \|P^{j} D_{t_{r}}^{q-\nu} u\|_{0}^{2}
	\\
	+
	\longsum[7]_{j=n_{1}+1 }^{n} \sum_{p_{1}=1 }^{m}
	C^{2(\nu+1)}_{b} q^{2\nu} \|a_{n+p_{1},j} (t) D_{x_{p_{1}}}D_{t_{r}}^{q-\nu} u\|_{0}^{2}.
	\end{multline*}
\end{linenomath}
In the first term $\nu$ $t_{r}$-derivatives are turned into a factor $q^{\nu}$;
moreover it is ready to restart the process using the sub elliptic estimate.
About the second term, it can be estimate recursively as done in \eqref{eq:J_6-3}.
We have 
\begin{linenomath}
	\begin{multline*}
	J_{4}
	\leq
	\sum_{\nu =1}^{q} 
	\longsum[7]_{j=n_{1}+1 }^{n} \widetilde{C}^{2(\nu+1)}_{b} q^{2\nu} \|P^{j} D_{t_{r}}^{q-\nu} u\|_{0}^{2}
	\\
	+
	\sum_{\nu =1}^{q} 
	\sum_{i=1}^{q-1}\sum_{\nu =1}^{q-i}
	\sum_{p_{1} =1}^{m}\cdots \sum_{p_{i} =1}^{m}
	C_{\star}^{2i}\widetilde{C}_{b}^{2(\nu+1)} q^{2\nu}
	\| P^{r}  D^{q-\nu-i}_{t_{r}} D_{x_{p_{1}}}\cdots D_{x_{p_{i}}}u\|_{0}^{2}
	\\
	+
	\sum_{i =1}^{q-1}
	\sum_{p_{1} =1}^{m}\cdots \sum_{p_{i+1} =1}^{m}
	C_{\star}^{2i}\widetilde{C}_{b}^{2(q-i+1)} q^{2(q-i)}
	\| D_{x_{p_{1}}} \cdots D_{x_{p_{i+1}}} u\|_{0}^{2}
	\\
	+
	C_{2}^{2(q+1)} q!^{2} \longsum[7]_{j=n_{1}+1 }^{n} \| D_{t_{j}}u\|_{0}^{2} 
	+
	\| D^{q}_{t_{r}} u\|_{0}^{2}.
	\end{multline*}
\end{linenomath}
The third and the fourth terms on the right hand side give analytic growth rate,
and, by \eqref{eq:Est_Rm}, the last term gives analytic growth
plus a term that can be absorbed on the left side of \eqref{eq:Est_Dt}.
Concerning the terms in the first two sums we can restart the process taking advantage from subelliptic estimate;
after a suitable number of steps we will obtain terms of the form \eqref{eq:J_6-iter}, which give analytic growth. 
So, modulo terms of the same form of $J_{1},J_{2}$ and $ J_{3}$ in the right hand side of \eqref{eq:Est_T_S1},
we obtain analytic growth.

\textbf{Term $J_{1}$} on the right hand side of \eqref{eq:Est_T_S1}.
We preface that to handle this case we will use a strategy close to that one used to treat the terms $J_{5}$ and $J_{4}$.\\
We have
\begin{linenomath}
	\begin{multline*}
	J_{1}=\sum_{\nu =1}^{q} \longsum[7]_{j,p = 1}^{m}  \binom{q}{\nu} 
	|\langle E_{\ell} \left( D_{t_{r}}^{\nu} a_{n+j,n+p}(t)\right)  D^{q-\nu}_{t_{r}} D_{x_{j}} D_{x_{p}} u, E_{\ell} D^{q}_{t_{r}} u\rangle |
	\\
	\leq
	\sum_{\nu =1}^{q-1} \longsum[7]_{j,p = 1}^{m}  \binom{q}{\nu} 
	|\langle E_{\ell} \left( D_{t_{r}}^{\nu} a_{n+j,n+p}(t)\right)  D^{q-\nu}_{t_{r}} D_{x_{j}} D_{x_{p}} u, E_{\ell} D^{q}_{t_{r}} u\rangle |
	\\
	\hspace{5em}
	+
	C_{2}^{2(q+1)} q^{2q} \longsum[7]_{j,p = 1}^{m} \| D_{x_{j}} D_{x_{p}} u\|_{0}^{2} + \| D^{q}_{t_{r}} u\|_{0}^{2}
	\\
	\leq
	\sum_{\nu =1}^{q-2} \longsum[7]_{j,p = 1}^{m}  \binom{q}{\nu} 
	|\langle E_{\ell} \left( D_{t_{r}}^{\nu} a_{n+j,n+p}(t)\right)  D^{q-\nu-1}_{t_{r}} D_{x_{j}} D_{x_{p}} u, E_{\ell} D^{q+1}_{t_{r}} u\rangle |
	\\
	+
	\sum_{\nu =1}^{q-2} \longsum[7]_{j,p = 1}^{m}  \binom{q}{\nu} 
	|\langle E_{\ell} \left( D_{t_{r}}^{\nu+1} a_{n+j,n+p}(t)\right)  D^{q-\nu-1}_{t_{r}} D_{x_{j}} D_{x_{p}} u, E_{\ell} D^{q}_{t_{r}} u\rangle |
	\\
	+
	\longsum[7]_{j,p = 1}^{m}  q
	|\langle E_{\ell} \left( D_{t_{r}}^{q-1} a_{n+j,n+p}(t)\right)  D_{t_{r}} D_{x_{j}} D_{x_{p}} u, E_{\ell} D^{q}_{t_{r}} u\rangle |
	\\
	\hspace{4.5em}
	+
	C_{2}^{2(q+1)} q^{2q} \longsum[7]_{j,p = 1}^{m} \| D_{x_{j}} D_{x_{p}} u\|_{0}^{2} + \| D^{q}_{t_{r}} u\|_{0}^{2}
	.
	\end{multline*}
\end{linenomath}
We remark that, using \eqref{eq:Est_Rm}, the last two terms
on the right hand side give analytic growth
plus a term that can be absorbed on the left side of \eqref{eq:Est_Dt}.\\ 
In order to estimate the first three terms on the right hand side, as before, we use a partition of unity in $\mathbb{T}^{n}$:
let $\bigcup_{s =1}^{M} \Omega_{s}$ be a finite open chart covering of $\mathbb{T}^{n}$
and $\rho_{s}(s) \in  C_{0}^{\infty}\left( \Omega_{s}; [0,1]\right)$, $s=1,\dots, M$,
such that $\sum_{s =1}^{M} \rho_{s}(t) =1$ is a subordinate partition of unit.
We have to estimate terms of the form
\begin{linenomath}
	\begin{align}\label{eq:H123}
	&
	H_{1}= \sum_{j,p = 1}^{m} q
	|\langle E_{\ell} \rho_{s}(t) \left( D_{t_{r}}^{q-1} a_{n+j,n+p}(t)\right)  D_{t_{r}} D_{x_{j}} D_{x_{p}} u, E_{\ell} D^{q}_{t_{r}} u\rangle |;
	\\
	&\nonumber
	H_{2}=\sum_{\nu =1}^{q-2} \longsum[7]_{j,p = 1}^{m}
	\binom{q}{\nu} 
	|\langle E_{\ell} \rho_{s}(t) \left( D_{t_{r}}^{\nu+1} a_{n+j,n+p}(t)\right)  D^{q-\nu-1}_{t_{r}} D_{x_{j}} D_{x_{p}} u, E_{\ell} D^{q}_{t_{r}} u\rangle |;
	\\
	& \nonumber
	H_{3}= \sum_{\nu =1}^{q-2} \longsum[7]_{j,p = 1}^{m}
	\binom{q}{\nu} 
	|\langle E_{\ell} \rho_{s} (t) \left( D_{t_{r}}^{\nu} a_{n+j,n+p}(t)\right)  D^{q-\nu-1}_{t_{r}} D_{x_{j}} D_{x_{p}} u, E_{\ell} D^{q+1}_{t_{r}} u\rangle |.
	\end{align}
\end{linenomath}
We observe that 
by the assumption $\eqref{A3_1}$, $a_{n+k,n+k}(t) \leq C_{\star} a^{2}_{r,r}(t') $, for every $k\in \{1,\dots,m\}$ and $r\in\{n_{1},\dots, n\}$,
and since
\begin{linenomath}
	\begin{equation*}
	| a_{n+j,n+p}(t)| \leq \frac{a_{n+j,n+j}(t) +a_{n+p,n+p}(t)}{2},
	\end{equation*}
\end{linenomath}
we have
\begin{linenomath}
	\begin{equation*}
	| a_{n+j,n+p}(t)| \leq C_{\star} a^{2}_{r,r}(t').
	\end{equation*}
\end{linenomath}
Without going into further detail; using the same argument used previously, when we discussed the term $J_{4}$,
we have that
\begin{linenomath}
	\begin{equation}
	\label{eq:ajp_1}
      a_{n+j,n+p}(t) = \widetilde{a}_{n+j,n+p}(t)\,
	\widetilde{\text{\textcursive{q}}}_{j,p}(t_{1}, \widetilde{t}, t'') \text{\textcursive{p}}_{r}^{2} (t_{1}, \widetilde{t}\,),
	\end{equation}
\end{linenomath}
in $\Omega_{s}$, where $\widetilde{a}_{n+j,n+p}(t) $ is an analytic function no where vanishing in $\Omega_{s}$,
$\text{\textcursive{p}}_{r} (t_{1}, \widetilde{t}\,) $ is  as in \eqref{eq:arr}
and $\widetilde{\text{\textcursive{q}}}_{j,p}(t_{1}, \widetilde{t}, t'')$ 
denotes a Weierstrass polynomial of suitable degree.
Since the dependence of $a_{n+j,n+p}(t)$ with respect to $t''$ is confined to the first two factors,
$\widetilde{a}_{n+j,n+p}(t)$ and $\widetilde{\text{\textcursive{q}}}_{j,p}(t)$, we have 
\begin{linenomath}
	\begin{equation}
	\label{eq:Dajp_1}
	D_{t_{r}}^{\nu}  a_{n+j,n+p}(t)= 
	\left[ D_{t_{r}}^{\nu}  (     \widetilde{a}_{n+j,n+p}(t)\,
	\widetilde{\text{\textcursive{q}}}_{j,p}(t_{1}, \widetilde{t}, t'')      ) \right] 
	\left[\widetilde{a}_{r,r}(t')\right]^{-2} a_{r,r}^{2}(t'),
	\end{equation}
\end{linenomath}
where $ \widetilde{a}_{r,r}(t') $ is an analytic function no where vanishing in $\Omega_{s}$, \eqref{eq:arr}.\\
We set $\widetilde{a}_{n+j,n+p}(t)\, \widetilde{\text{\textcursive{q}}}_{j,p}(t_{1}, \widetilde{t}, t'') =\dbtilde{\text{\textcursive{q}}}_{\,j,p}(t) $.\\
Estimate of the terms $H_{1}$ and $H_{2}$ in \eqref{eq:H123}: taking advantage by \eqref{eq:Dajp_1}
and recalling that $ a_{r,r}(t') D_{t_{r}}= 2^{-1} P^{r} - \sum_{p=1}^{m} a_{n+p,r}(t) D_{x_{p}}$,
we have
\begin{linenomath}
	\begin{multline}\label{eq:Est_H12}
	H_{1} +H_{2}
	\\
	=
	\sum_{j,p = 1}^{m} q
	|\langle E_{\ell} \rho_{s}(t) \left( D_{t_{r}}^{q-1}  \dbtilde{\text{\textcursive{q}}}_{\,j,p}(t)  \right) 
	\left[\widetilde{a}_{r,r}(t')\right]^{-2} a_{r,r}^{2}(t')
	 D_{t_{r}} D_{x_{j}} D_{x_{p}} u, E_{\ell} D^{q}_{t_{r}} u\rangle |
	 \\
	+
	\sum_{\nu =1}^{q-2} \longsum[7]_{j,p = 1}^{m}
	\binom{q}{\nu} 
	|\langle E_{\ell} \rho_{s}(t) \! \left(\!  D_{t_{r}}^{\nu+1}  \dbtilde{\text{\textcursive{q}}}_{\,j,p}(t) \!\right) \!
	\left[\widetilde{a}_{r,r}(t')\right]^{-2} \! a_{r,r}^{2}(t')
	 D^{q-\nu-1}_{t_{r}} D_{x_{j}} D_{x_{p}} u,
	 \\
	  \hspace{28em}E_{\ell} D^{q}_{t_{r}} u\rangle |
	 \\
	 \leq
	 C_{1}^{2}  C_{a}^{q} q! 
	 \left( 
	 \sum_{j,p = 1}^{m}   
	 \| P^{r} D_{x_{j}} D_{x_{p}} u\|_{0}
	 +
	 \sum_{j,p_{1},p_{2} = 1}^{m}  
	 \| D_{x_{j}} D_{x_{p_{1}}} D_{x_{p_{2}}} u\|_{0} 
	 \right) \|D^{q}_{t_{r}} u\|_{0}
	 \\
	 +
	 \sum_{\nu =1}^{q-2} 
	 \binom{q}{\nu} C_{1}^{2} C_{a}^{\nu+2} 2^{\nu} (\nu+1)! 
	 \Bigg(
	 \longsum[7]_{j,p = 1}^{m}
	 \| P^{r} D_{t_{r}}^{q-\nu-2} D_{x_{j}} D_{x_{p}} u\|_{0} 
	 \\
	 \qquad \qquad
	 +
	 \longsum[7]_{j,p_{1} p_{2} = 1}^{m}
	 \| a_{n+p_{2},r} (t)D_{t_{r}}^{q-\nu-2} D_{x_{j}} D_{x_{p_{1}}} D_{x_{p_{2}}}u\|_{0} 
	 \Bigg)
	 \frac{  \|D^{q}_{t_{r}} u\|_{0}  }{2^{\nu}} 
	 \\
	 \leq 
	 \widetilde{C}_{a}^{2q} q!^{2} \left( 
	 \sum_{j,p = 1}^{m}  \| P^{r} D_{x_{j}} D_{x_{p}} u\|_{0}^{2} 
	 +  \sum_{j,p_{1},p_{2} = 1}^{m}  \| D_{x_{j}} D_{x_{p_{1}}} D_{x_{p_{2}}} u\|_{0}^{2} 
	  \right)
	  \\
	  +
	  \sum_{\nu =1}^{q-2} 
	  \widetilde{C}_{a}^{2(\nu+2)} q^{2\nu} 
	  \Bigg(
	  \longsum[7]_{j,p_{1} = 1}^{m}
	   \| P^{r} D_{t_{r}}^{q-\nu-2} D_{x_{j}} D_{x_{p_{1}}} u\|_{0}^{2}
	   \\
	   +
	   \longsum[7]_{j,p_{1} p_{2} = 1}^{m}
	   \| a_{n+p_{2},r} (t)D_{t_{r}}^{q-\nu-2} D_{x_{j}} D_{x_{p_{1}}} D_{x_{p_{2}}}u\|_{0}^{2}\Bigg)
	   + 2 \|D^{q}_{t_{r}} u\|_{0}^{2},
	\end{multline}
\end{linenomath}
where $\widetilde{C}_{a}$ is a suitable positive constant independent of $q$ and $\nu$.
We point out that to obtain the last inequality we use that $ \nu + 1 \leq
2^{\nu} $ for $ \nu \geq 0 $, $(\nu+1)! \leq 2^{\nu} \nu!$.
We remark that the first term on the right hand side gives analytic growth
and that, by \eqref{eq:Est_Rm}, the last term gives analytic growth
plus a term that can be absorbed on the left side of \eqref{eq:Est_Dt}.
Concerning the third term on the right hand side of the above inequality,
we can not apply directly the subelliptic estimate, we need to use  
assumption \eqref{A3_1}, more precisely that $|a_{n+p_{2},r} (t)| \leq C_{\star} a_{r,r}(t') $.
Iterating the procedure adopted previously (see the beginning of the estimate of the terms $H_{1}$ and $H_{2}$) 
this term can be bounded as follow
\begin{linenomath}
	\begin{multline}\label{eq:Est_H12_1}
	\sum_{\nu =1}^{q-2} \longsum[7]_{j,p_{1} p_{2} = 1}^{m}
	\widetilde{C}_{a}^{2(\nu+2)} q^{2\nu} \| a_{n+p_{2},r} (t)D_{t_{r}}^{q-\nu-2} D_{x_{j}} D_{x_{p_{1}}} D_{x_{p_{2}}}u\|_{0}^{2}
	\\
	\leq
	\widetilde{C}_{a}^{2q)} q^{2(q-2)} \| a_{n+p_{2},r} (t) D_{x_{j}} D_{x_{p_{1}}} D_{x_{p_{2}}}u\|_{0}^{2}
	\\
	+
	\sum_{\nu =1}^{q-3} \sum_{i=1}^{q-\nu-3} 
	\sum_{j=1}^{m} \sum_{p_{1} = 1}^{m}\!\!\cdots\!\! \sum_{p_{i+1} = 1}^{m} 	\widetilde{C}_{a}^{2(\nu+2)}  C_{\star}^{2i}  q^{2\nu}
     \| P^{r} D_{t_{r}}^{q-\nu-2-i} D_{x_{j}} D_{x_{p_{1}}} \cdots D_{x_{p_{i+1}}}u\|_{0}^{2}
     \\
     +
     \sum_{\nu =1}^{q-3} 
     \sum_{j=1}^{m} \sum_{p_{1} = 1}^{m}\cdots \!\!\!\! \!\!\!\sum_{p_{q-\nu-2} = 1}^{m} 	
     \!\!\! \!\!\! \! \widetilde{C}_{a}^{2(\nu+2)}  
     \!C_{\star}^{2(q-\nu-2)}  q^{2\nu}
     \| a_{n+p_{q-\nu-2},r} (t)  D_{x_{j}} D_{x_{p_{1}}} \!\!\cdots D_{x_{p_{q-\nu-2}} } u\|_{0}^{2}.
	\end{multline}
\end{linenomath}
The first and the last terms on the right hand side give analytic growth,
indeed by \eqref{eq:Dx_Est} we have that $ \|  D_{x_{j}} D_{x_{p_{1}}} \cdots D_{x_{p_{q-\nu-2}} } u\|_{0}^{2} \leq C_{2}^{2(q-\nu)} q^{2(q-\nu-1)}$,
where $C_{2}$ is a positive constant. Concerning the terms in the second sum
we remark that $\nu$ $t_{r}$-derivatives were turned into a factor $q^{\nu}$
and $i+2$ $t_{r}$-derivatives were turned into $i+2$ $x$-derivatives.
About their estimate, we can restart the process applying the subelliptic
estimate,  after $s$-times we will face of with terms of the form \eqref{eq:J_6-iter_0};
so, iterating a suitable number of times the process we will  obtain terms of the form \eqref{eq:J_6-iter},
which give analytic growth, modulo terms of the form $H_{3}$, $J_{2}$ and $J_{3}$,
which we will see shortly how to treat.\\ 
Term $H_{3}$ in \eqref{eq:H123}: we will analyze the case $E_{\ell}$, with $\ell\in \{ 1,\dots,N\}$,
i.e. when $E_{\ell} \neq id$. The case $E_{0}=id$ can be treated in the same way without
the extra terms that arise from the commutators between $E_{\ell}$,  and $a_{r,r}(t')$.
By \eqref{eq:Dajp_1}, we have 
\begin{linenomath}
	\begin{multline}\label{eq:Est_H3}
	H_{3}
	\\
	= \sum_{\nu =1}^{q-2} \longsum[7]_{j,p = 1}^{m}
	\binom{q}{\nu} 
	|\langle E_{\ell} \rho_{s} (t)\! 
	\left( D_{t_{r}}^{\nu}  \dbtilde{\text{\textcursive{q}}}_{\,j,p}(t)\right) \!
	\left[\widetilde{a}_{r,r}(t')\right]^{-2} \!a_{r,r}^{2}(t')
	 D^{q-\nu-1}_{t_{r}} D_{x_{j}} D_{x_{p}} u,
	 \\ 
	 \hspace{29em} E_{\ell} D^{q+1}_{t_{r}} u\rangle |
	 \\
	 \leq
	 \sum_{\nu =1}^{q-2} \longsum[7]_{j,p = 1}^{m}
	 \binom{q}{\nu} 
	 |\langle E_{\ell} \rho_{s} (t) \!
	 \left( \!D_{t_{r}}^{\nu}  \dbtilde{\text{\textcursive{q}}}_{\,j,p}(t) \right) \!
	 \left[\widetilde{a}_{r,r}(t')\right]^{-2} \! a_{r,r}(t')
	 D^{q-\nu-1}_{t_{r}} D_{x_{j}} D_{x_{p}} u, 
	 \\
	 \hspace{26em}
	 E_{\ell} a_{r,r}(t') D^{q+1}_{t_{r}} u\rangle |
	 \\
	 +
	 \sum_{\nu =1}^{q-2} \longsum[7]_{j,p = 1}^{m}
	 \binom{q}{\nu} 
	 |\langle \rho_{s} (t) 
	 \left(\! D_{t_{r}}^{\nu}  \dbtilde{\text{\textcursive{q}}}_{\,j,p}(t) \right) \! 
	 \left[\widetilde{a}_{r,r}(t')\right]^{-2} \! a_{r,r}(t')
	 D^{q-\nu-1}_{t_{r}} D_{x_{j}} D_{x_{p}} u, 
	 \\
	 \hspace{23em}
	 [ a_{r,r}(t'),E_{\ell}^{*}E_{\ell} ] D^{q+1}_{t_{r}} u\rangle |
	 \\
	 =
	 H_{3,1} + H_{3,2}, 
	\end{multline}
\end{linenomath}
where  $E_{\ell}^{*} $ is the adjoint of $E_{\ell}$.\\
We begin to analyze the last term, $H_{3,2}$.
Since $[ a_{r,r}(t'),E_{\ell}^{*}E_{\ell} ] $
is a pseudodifferential operator of order $-1$ we have that $[ a_{r,r}(t'),E_{\ell}^{*}E_{\ell} ] D_{t_{r}}$
is a zero order pseudodifferential operator.
Let $C_{3}>0$ such that $\|[ a_{r,r}(t'),E_{\ell}^{*}E_{\ell} ] D_{t_{r}}\|_{L^{2} \rightarrow L^{2}} \leq C_{3}$,
we have 
\begin{linenomath}
	\begin{multline}\label{eq:Est_H323}
	 H_{3,2} 
	\leq
	\sum_{\nu =1}^{q-2} \longsum[7]_{j,p = 1}^{m}
	m^{2} C_{1}C_{3} 2^{\nu+1} C_{a}^{2(\nu+1)} q^{2\nu}
	\|P^{r} D^{q-\nu-2}_{t_{r}} D_{x_{j}} D_{x_{p}} u \|_{0}^{2}
	\\
	+
	\sum_{\nu =1}^{q-2} \sum_{j,p_{1},p_{2} = 1}^{m}
	m^{2} C_{1}C_{3} 2^{\nu+1} C_{a}^{2(\nu+1)} q^{2\nu}
	\| a_{n+p_{2},r}(t')
	D^{q-\nu-2}_{t_{r}} D_{x_{j}} D_{x_{p_{1}}} D_{x_{p_{1}}} u \|_{0}^{2}
	\\
	+
	2\| D^{q}_{t_{r}} u\|_{0}^{2}.
	\end{multline}
\end{linenomath}
We remark that the first two terms on the right hand side have the same form of terms
already met in the estimate of $H_{1}$ and $H_{2}$, second and third term
on the right hand side of \eqref{eq:Est_H12}.
In particular the second term can be estimated as done in \eqref{eq:Est_H12_1}.
By \eqref{eq:Est_Rm}, the last term gives analytic growth
plus a term that can be absorbed on the left side of \eqref{eq:Est_Dt}.
So, $H_{3,2}$ is estimated by terms which give analytic growth,
modulo terms of the form $H_{3,1}$, $J_{2}$ and $J_{3}$ and a term that can be absorbed on the lest hand side of \eqref{eq:Est_Dt}.\\
Term $H_{3,1}$, we have
\begin{linenomath}
	\begin{multline}\label{eq:Est_H31}
	H_{3,1} \leq 
	\sum_{\nu =1}^{q-2} \longsum[7]_{j,p_{1} = 1}^{m}
	C_{1}^{2} C_{a}^{\nu+1} q^{\nu} 
	\Bigg(
	\| P^{r} D^{q-\nu-2}_{t_{r}} D_{x_{j}} D_{x_{p_{1}}} u\|_{0} \| P^{r} D^{q}_{t_{r}} u\|_{0}
	\\
	+ \sum_{q_{1}=1}^{m}  \| P^{r} D^{q-\nu-2}_{t_{r}} D_{x_{j}} D_{x_{p_{1}}} u\|_{0} \| a_{n+q_{1},r} (t)D^{q}_{t_{r}} D_{x_{q_{1}}} u\|_{0}
	\\
	+
	 \sum_{p_{2}=1}^{m}  \| a_{n+p_{2},r} (t)D^{q-\nu-2}_{t_{r}} D_{x_{j}} D_{x_{p_{1}}}  D_{x_{p_{2}}} u\|_{0} \|  P^{r} D^{q}_{t_{r}} D_{x_{q_{1}}} u\|_{0}
	\\
	+
	\sum_{p_{2}=1}^{m} \sum_{q_{1}=1}^{m}  \| a_{n+p_{2},r} (t)D^{q-\nu-2}_{t_{r}} D_{x_{j}} D_{x_{p_{1}}}  D_{x_{p_{2}}} u\|_{0}
	\| a_{n+q_{1},r} (t)D^{q}_{t_{r}} D_{x_{q_{1}}} u\|_{0}
	\Bigg)
	\\
	\leq
	2 \sum_{\nu =1}^{q-2} \longsum[7]_{j,p_{1} = 1}^{m}
	C_{1}^{2} m^{2} \left( C_{\nu} +1 \right) C_{a}^{\nu+1} q^{\nu} \| P^{r} D^{q-\nu-2}_{t_{r}} D_{x_{j}} D_{x_{p_{1}}} u\|_{0}^{2}
	\\
	+2
	\sum_{\nu =1}^{q-2} \longsum[7]_{j,p_{1} = 1}^{m} \sum_{p_{2}=1}^{m} 
	C_{1}^{2} m^{3} \left( C_{\nu} +1 \right) C_{a}^{\nu+1} q^{\nu} 
	\| a_{n+p_{2},r} (t)D^{q-\nu-2}_{t_{r}} D_{x_{j}} D_{x_{p_{1}}}  D_{x_{p_{2}}} u\|_{0}^{2}
	\\
	+
	2 \sum_{\nu =1}^{q-2} \sum_{q_{1}=1}^{m}
	m^{2} \| a_{n+q_{1},r} (t)D^{q}_{t_{r}} D_{x_{q_{1}}} u\|_{0}^{2}
	+
	2 \sum_{\nu =1}^{q-2} C_{\nu}^{-1} \| P^{r} D^{q}_{t_{r}} u\|_{0}^{2},
	\end{multline}
\end{linenomath}
where $C_{\nu}$, $\nu=1,\dots,q-2$, are arbitrary constants. We choose
$ C_{\nu} = \delta^{-1}_{1} 2^{\nu+2}  $ where $\delta_{1}$ is a fixed small constant
such that $(N+1)C \delta_{1} \leq 1$, here $C$ is the constant in the right hand side of the subelliptic
estimate \eqref{eq:Est_Dt}. 
Since $\sum_{\nu=1}^{q-2} C_{\nu}^{-1} \leq \delta_{1} 2^{-2}$, we
conclude that the last term on the right hand side of the above estimate can be absorbed on the left hand side of \eqref{eq:Est_Dt}.
Concerning the other terms, they have a form already met previously,
more precisely the second and the third term can be estimate as done in \eqref{eq:Est_H12_1},
obtaining terms that give analytic growth rate directly or terms,
as the first on the right hand side, that can be estimated restarting the process via the subelliptic estimate. 
About these terms, as showed previously, after a suitable number of iterations
they will give terms of the form \eqref{eq:J_6-iter}, 
which give analytic growth, modulo terms of the form as $J_{2}$ and $J_{3}$.\\
Summing up, the term $J_{1}$ can be estimate by terms that give analytic growth
modulo terms that can be absorbed on the right hand side of \eqref{eq:Est_Dt}
or terms of the form as $J_{2}$ and $J_{3}$, in the right hand side of \eqref{eq:Est_T_S1}.

\textbf{Term $J_{3}$} on the right hand side of \eqref{eq:Est_T_S1}.
We proceed in a similar way to that used to treat $J_{1}$.
We have
\begin{linenomath}
	\begin{multline}\label{eq:Est_J3-1}
	J_{3}=
	2 \sum_{\nu =1}^{q}
	\longsum[7]_{j =n_{1}+1}^{n} \sum_{p =1}^{m} \binom{q}{\nu} 
	|\langle E_{\ell} \left( D_{t_{r}}^{\nu} a_{j,n+p} (t) \right)  D^{q-\nu}_{t_{r}} D_{t_{j}} D_{x_{p}} u,  E_{\ell} D^{q}_{t_{r}} u\rangle |
	\\
	\leq
	2\sum_{\nu =1}^{q-1}
	\longsum[7]_{j =n_{1}+1}^{n} \sum_{p =1}^{m} \binom{q}{\nu} 
	|\langle E_{\ell} \left( D_{t_{r}}^{\nu} a_{j,n+p} (t) \right)  D^{q-\nu}_{t_{r}}  D_{x_{p}} u,  E_{\ell} D_{t_{j}} D^{q}_{t_{r}} u\rangle |
	\\
	+
	2\sum_{\nu =1}^{q-1}
	\longsum[7]_{j =n_{1}+1}^{n} \sum_{p =1}^{m} \binom{q}{\nu} 
	|\langle E_{\ell} \left( D_{t_{j}} D_{t_{r}}^{\nu} a_{j,n+p} (t) \right)  D^{q-\nu}_{t_{r}}  D_{x_{p}} u,  E_{\ell} D^{q}_{t_{r}} u\rangle |
	\\
	+
	\longsum[7]_{j =n_{1}+1}^{n} \sum_{p =1}^{m}
	|\langle E_{\ell} \left( D_{t_{r}}^{q} a_{j,n+p} (t) \right)  D_{t_{j}} D_{x_{p}} u,  E_{\ell} D^{q}_{t_{r}} u\rangle |.
	\end{multline}
\end{linenomath}
The last term gives analytic growth modulo a term that can be absorbed on the left hand side of the
subelliptic estimate \eqref{eq:Est_Dt}.
In order to estimate the other two terms, we use a partition of unity in $\mathbb{T}^{n}$, as already done several times,
and the assumption $\mathbf{(A3)}$, \eqref{A3_1} and \eqref{A3_2}:
\begin{linenomath}
	\begin{equation*}
	|a_{j,n+p}(t)| \leq C_{\star} \left(a_{k,k}(t')\right)^{2} \text{ and } \longsum[7]_{i=n_{1}+1}^{n} a_{i,i}(t') \leq C_{\star} a_{k,k}(t'),
	\end{equation*}
\end{linenomath}
for every $j,k \in \{n_{1},\dots,n  \}$ and $p\in \{1, \dots, m \}$.\\ 
Let $\Omega_{s}$ and $\rho_{s}(t)$ be as before. Using the same strategy adopted
to obtain \eqref{eq:cp_1}, \eqref{eq:cp_2}, \eqref{eq:ajp_1} and \eqref{eq:Dajp_1}
we have
\begin{linenomath}
	\begin{equation}
	\label{eq:ajnp_1}
	a_{j,n+p}(t) = \widetilde{a}_{j,n+p}(t)\,
	\widetilde{\text{\textcursive{q}}}_{\,j,p}(t_{1}, \widetilde{t}, t'') \text{\textcursive{p}}_{r}^{2} (t_{1}, \widetilde{t}\,),
	\end{equation}
\end{linenomath}
in $\Omega_{s}$, where $\widetilde{a}_{j,n+p}(t) $ is an analytic function no where vanishing in $\Omega_{s}$,
$\text{\textcursive{p}}_{r} (t_{1}, \widetilde{t}\,) $ is  as in \eqref{eq:arr}
and $\widetilde{\text{\textcursive{q}}}_{\, j,p}(t_{1}, \widetilde{t}, t'')$ 
denotes a Weierstrass polynomial of suitable degree.
Since the dependence of $a_{j,n+p}(t)$ with respect to $t''$ is confined to the first two factors,
$\widetilde{a}_{j,n+p}(t)$ and $\widetilde{\text{\textcursive{q}}}_{\,j,p}(t)$, we have 
\begin{linenomath}
	\begin{equation}
	\label{eq:Dajnp_1}
	D_{t_{r_{1}}}^{\nu_{1}} D_{t_{r_{2}}}^{\nu_{2}}  a_{j,n+p}(t)= 
	\left[ D_{t_{r_{1}}}^{\nu_{1}} D_{t_{r_{2}}}^{\nu_{2}}  
	( \widetilde{a}_{n+j,n+p}(t)\, \widetilde{\text{\textcursive{q}}}_{\, j,p}(t) ) \right] 
	\left[\widetilde{a}_{r,r}(t')\right]^{-2} a_{r,r}^{2}(t'),
	\end{equation}
\end{linenomath}
where $ \widetilde{a}_{r,r}(t') $, introduced in  \eqref{eq:arr}, is an analytic function no where vanishing in $\Omega_{s}$,
$r_{1},r_{2} \in \{ n_{1}+1, \dots, n\}$.\\
We set $\widetilde{a}_{j,n+p}(t)\, \widetilde{\text{\textcursive{q}}}_{\, j,p}(t_{1}, \widetilde{t}, t'') =\dbtilde{\text{\textcursive{q}}}_{\, j,p}(t) $.\\
We begin to analyze the second term on the right hand side of \eqref{eq:Est_J3-1}.
By \eqref{eq:Dajnp_1}, the assumption \eqref{A3_2}, recalling that 
$P^{k} = 2a_{k,k}(t') D_{t_{k}} + 2 \sum_{p_{1}=1}^{m} a_{k,n+p_{1}}(t) D_{x_{p_{1}}}$,  $ k=n_{1}+1,\dots, n$,
and using that $\nu+1 \leq 2^{\nu}$, we obtain
\begin{linenomath}
	\begin{multline}\label{eq:Est_J32_33}
	2\sum_{\nu =1}^{q-1}
	\longsum[7]_{j =n_{1}+1}^{n} \sum_{p =1}^{m} \binom{q}{\nu} 
	|\langle E_{\ell} \rho_{s}(t)\left( D_{t_{j}} D_{t_{r}}^{\nu} a_{j,n+p} (t) \right)  D^{q-\nu}_{t_{r}}  D_{x_{p}} u,  E_{\ell} D^{q}_{t_{r}} u\rangle |
	\\
	= 
	2\sum_{\nu =1}^{q-1}
	\longsum[7]_{j =n_{1}+1}^{n} \sum_{p =1}^{m} \binom{q}{\nu} 
	|\langle E_{\ell} \rho_{s}(t)\left( D_{t_{j}} D_{t_{r}}^{\nu} \dbtilde{\text{\textcursive{q}}}_{\, j,p}(t) \right) 
	\left[\widetilde{a}_{r,r}(t')\right]^{-2} a_{r,r}^{2}(t') D^{q-\nu}_{t_{r}}  D_{x_{p}} u,  
	\\
	\hspace{29em}
	E_{\ell} D^{q}_{t_{r}} u\rangle |
	\\
	\leq
	\sum_{\nu =1}^{q-1}
	\longsum[7]_{j =n_{1}+1}^{n} \sum_{p =1}^{m} C_{1}^{2} 2^{2\nu+1} C_{a}^{\nu+2} q^{\nu}
	\| a_{r,r}(t') D^{q-\nu}_{t_{r}}  D_{x_{p}} u\|_{0}  2^{-\nu} \| D^{q}_{t_{r}} u\|_{0}
	\\
	\leq
	\sum_{\nu =1}^{q-1}
	\longsum[7]_{j =n_{1}+1}^{n} \sum_{p =1}^{m} 
	m n_{1} C_{1}^{4} 2^{2(2\nu+1)} C_{a}^{2(\nu+2)} q^{2\nu}
	\| a_{r,r}(t') D^{q-\nu}_{t_{r}}  D_{x_{p}} u\|_{0}^{2} 
	+
	 \| D^{q}_{t_{r}} u\|_{0}^{2}
	 \\
	 \hspace{-10em}
	 \leq
	 \sum_{\nu =1}^{q-1}
	 \sum_{p =1}^{m} \widetilde{C}_{a}^{2(\nu+1)} q^{2\nu}
	 \| P^{r} D^{q-\nu-1}_{t_{r}}  D_{x_{p}} u\|_{0}^{2} 
	 \\
	 +
	 \sum_{\nu =1}^{q-1} \sum_{p_{1} =1}^{m}  \sum_{p_{2} =1}^{m} 
	 \widetilde{C}_{a}^{2(\nu+1)} q^{2\nu}
	 \| a_{r,n+p_{2}}(t') D^{q-\nu-1}_{t_{r}}  D_{x_{p_{2}}} D_{x_{p_{1}}} u\|_{0}^{2} 
	 +
	 \| D^{q}_{t_{r}} u\|_{0}^{2}
	.
	\end{multline}
\end{linenomath}
The last term can be estimate as in \eqref{eq:Est_Rm} obtaining a
term that gives analytic growth and a term which can be absorbed
on the left hand side of \eqref{eq:Est_Dt}.
The second term as the same form of the third term on the right
hand side of \eqref{eq:Est_H12}; it can be estimate as done in \eqref{eq:Est_H12_1},
producing terms that gives analytic growth modulo terms
that can be handled restarting the process, i.e. applying the sub-elliptic estimate,
and terms of the form $J_{2}$. \\
%
Estimate of the first term on the right hand side of \eqref{eq:Est_J3-1}.
As before we use the partition of unity in $\mathbb{T}^{n}$: 
\begin{linenomath}
	\begin{multline}\label{eq:Est_J31}
	2\sum_{\nu =1}^{q-1}
	\longsum[7]_{j =n_{1}+1}^{n} \sum_{p =1}^{m} \binom{q}{\nu} 
	|\langle E_{\ell} \rho_{s}(t)\left( D_{t_{r}}^{\nu} a_{j,n+p} (t) \right)  D^{q-\nu}_{t_{r}}  D_{x_{p}} u,  E_{\ell} D_{t_{j}} D^{q}_{t_{r}} u\rangle |
	\\
	=
	2\sum_{\nu =1}^{q-1}
	\longsum[7]_{j =n_{1}+1}^{n} \sum_{p =1}^{m} \binom{q}{\nu} 
	|\langle E_{\ell} \rho_{s}(t)
	\left(  D_{t_{r}}^{\nu} \dbtilde{\text{\textcursive{q}}}_{\, j,p}(t) \right) 
	\left[\widetilde{a}_{r,r}(t')\right]^{-2} a_{r,r}^{2}(t') 
	 D^{q-\nu}_{t_{r}}  D_{x_{p}} u,  
	 \\
	 \hspace{27em}
	 E_{\ell} D_{t_{j}} D^{q}_{t_{r}} u\rangle |
	 \\
	 \leq
	 2\sum_{\nu =1}^{q-1}
	 \longsum[7]_{j =n_{1}+1}^{n} \sum_{p =1}^{m} \binom{q}{\nu} 
	 |\langle E_{\ell} \rho_{s}(t)
	 \left(  D_{t_{r}}^{\nu} \dbtilde{\text{\textcursive{q}}}_{\, j,p}(t) \right) 
	 \left[\widetilde{a}_{r,r}(t')\right]^{-2} a_{r,r}(t') 
	 D^{q-\nu}_{t_{r}}  D_{x_{p}} u, 
	  \\
	 \hspace{25em}
	  E_{\ell} a_{r,r}(t') D_{t_{j}} D^{q}_{t_{r}} u\rangle |
	 \\
	 +
	 2\sum_{\nu =1}^{q-1}
	 \longsum[7]_{j =n_{1}+1}^{n} \sum_{p =1}^{m} \binom{q}{\nu} 
	 |\langle \rho_{s}(t)
	 \left(  D_{t_{r}}^{\nu} \dbtilde{\text{\textcursive{q}}}_{\, j,p}(t) \right) 
	 \left[\widetilde{a}_{r,r}(t')\right]^{-2} a_{r,r}(t') 
	 D^{q-\nu}_{t_{r}}  D_{x_{p}} u, 
	  \\
	 \hspace{25em}
	 [ a_{r,r}(t'),E_{\ell}^{*}E_{\ell} ]D_{t_{j}} D^{q}_{t_{r}} u\rangle |.
	\end{multline}
\end{linenomath}
where  $E_{\ell}^{*} $ is the adjoint of $E_{\ell}$. We remark that
since $[ a_{r,r}(t'),E_{\ell}^{*}E_{\ell} ] $ is a pseudodifferential operator of order $-1$ we have that $[ a_{r,r}(t'),E_{\ell}^{*}E_{\ell} ] D_{t_{r}}$
is a zero order pseudodifferential operator and moreover there is a positive constant $C_{3}$,
such that $\|[ a_{r,r}(t'),E_{\ell}^{*}E_{\ell} ] D_{t_{r}}\|_{L^{2} \rightarrow L^{2}} \leq C_{3}$.
By the assumption \eqref{A3_2}, the identity 
$P^{k} = 2a_{k,k}(t') D_{t_{k}} + 2 \sum_{p_{1}=1}^{m} a_{k,n+p_{1}}(t) D_{x_{p_{1}}}$,  $ k=n_{1}+1,\dots, n$,
and using the same strategy adopted to estimate $H_{3}$, \eqref{eq:Est_H323} and \eqref{eq:Est_H31},
the right hand side of above inequality can be estimate  by
\begin{linenomath}
	\begin{multline}
	\sum_{\nu =1}^{q-1} \sum_{p_{1} =1}^{m}
	C_{4,\delta_{1}} C_{a}^{2(\nu+1)} q^{2\nu} \| P^{r} D^{q-\nu-1 }_{t_{r}}  D_{x_{p_{1}}} u\|_{0}^{2}
	\\
	+
	\sum_{\nu =1}^{q-1} \sum_{p_{1} =1}^{m} \sum_{p_{2} =1}^{m}
	C_{4,\delta_{1}} C_{a}^{2(\nu+1)} q^{2\nu} \| a_{n+p_{2} } (t) D^{q-\nu-1 }_{t_{r}}  D_{x_{p_{1}}} D_{x_{p_{2}}}u\|_{0}^{2}
	\\
	+
	\longsum[7]_{j =n_{1}+1}^{n} \sum_{p_{2} =1}^{m} nm \| a_{n+p_{2},j} (t) D^{q }_{t_{r}} D_{x_{p_{2}}}u\|_{0}^{2}
	+
	2 \| D^{q }_{t_{r}} u\|_{0}^{2} + \delta_{1} \longsum[7]_{j =n_{1}+1}^{n} \| P^{j} D^{q }_{t_{r}}  u\|_{0}^{2},
	\end{multline}
\end{linenomath}
where $\delta_{1}$ is a suitable small constant as before and $C_{4,\delta_{1}}$ is a positive constant depending
of $\delta_{1}^{-1}$ but independent of $q$ and $\nu$. 
The last term on the right hand side can be absorbed on the left hand side of \eqref{eq:Est_Dt}.
The second to last term can be estimate as in \eqref{eq:Est_Rm} obtaining a
term that gives analytic growth and a term which can be absorbed
on the left hand side of \eqref{eq:Est_Dt}.
Concerning the other terms, they have a form already met previously,
more precisely the second and the third term can be estimate as done in \eqref{eq:Est_H12_1},
producing terms that give analytic growth rate directly or terms,
as the first, that can be estimated restarting the process via the subelliptic estimate. 
About these terms, as showed previously, after a suitable number of iterations,
they will give terms of the form \eqref{eq:J_6-iter}, that gives analytic growth.

Summing up, the term $J_{3}$ can be estimate by terms that give analytic growth,
modulo terms that can be absorbed on the right hand side of  \eqref{eq:Est_Dt}
or of the same form of $J_{2}$ in the right hand side of  \eqref{eq:Est_T_S1}.

\textbf{Term $J_{2}$} on the right hand side of \eqref{eq:Est_T_S1}.
We proceed in a similar way to that used to treat $J_{1}$ and $J_{3}$.
We have
\begin{linenomath}
	\begin{multline*}
	J_{2}=2 \sum_{\nu =1}^{q}
	\sum_{j =1}^{n_{1}} \sum_{p =1}^{m} \binom{q}{\nu} 
	|\langle E_{\ell} \left( D_{t_{r}}^{\nu} a_{j,n+p} (t) \right)  D^{q-\nu}_{t_{r}} D_{t_{j}} D_{x_{p}} u, E_{\ell}D^{q}_{t_{r}} u\rangle |
	\\
	\leq
	2 \sum_{\nu =1}^{q-1}
	\sum_{j =1}^{n_{1}} \sum_{p =1}^{m} \binom{q}{\nu} 
	|\langle E_{\ell} \left( D_{t_{r}}^{\nu} a_{j,n+p} (t) \right)  D^{q-\nu-1}_{t_{r}} D_{t_{j}} D_{x_{p}} u, E_{\ell} D_{t_{r}} D^{q}_{t_{r}} u\rangle |
	\\
	+
	2 \sum_{\nu =1}^{q-1}
	\sum_{j =1}^{n_{1}} \sum_{p =1}^{m} \binom{q}{\nu} 
	|\langle E_{\ell} \left( D_{t_{r}}^{\nu+1} a_{j,n+p} (t) \right)  D^{q-\nu-1}_{t_{r}}  D_{t_{j}} D_{x_{p}} u, E_{\ell}  D^{q}_{t_{r}} u\rangle |
	\\
	+
	\sum_{j =1}^{n_{1}} \sum_{p =1}^{m} 
	|\langle E_{\ell} \left( D_{t_{r}}^{q} a_{j,n+p} (t) \right)  D_{t_{j}} D_{x_{p}} u, E_{\ell}D^{q}_{t_{r}} u\rangle |
	=
	J_{2,1} +J_{2,2} +J_{2,3}.
	\end{multline*}
\end{linenomath}
We begin to estimate $J_{2,3}$. By \eqref{Dtj_1n}, we have 
\begin{linenomath}
	\begin{multline*}
	J_{2,3} \leq C_{2}^{2(q+1)} q^{2q} \left( \sum_{j_{1} =1}^{n_{1}} \sum_{p_{1} =1}^{m}  \| P^{j_{1}} D_{x_{p_{1}}} u\|^{2}_{0}
	+ \sum_{p_{1} =1}^{m} \sum_{p_{2} =1}^{m} \| D_{x_{p_{1}}} D_{x_{p_{2}}} u\|_{0}^{2}\right)
	\\
	+2\| D^{q}_{t_{r}} u \|_{0}^{2}.
	\end{multline*}
\end{linenomath}
The first term on the right hand side  gives analytic growth rate; the second term 
can be estimate via \eqref{eq:Est_Rm} giving a contribution of analytic growth
plus a term that can be absorbed on the left side of the \eqref{eq:Est_Dt}.\\
Terms $J_{2,2}$. By \eqref{Dtj_1n}, we have 
\begin{linenomath}
	\begin{multline*}
	J_{2,2}
	\leq
     \sum_{\nu =1}^{q-1}
	\sum_{j_{2} =1}^{n_{1}} \sum_{p_{1} =1}^{m} C_{3} (2C_{a})^{2(\nu +2)} q^{2\nu}
	\|P^{j_{2}}  D^{q-\nu-1}_{t_{r}}  D_{x_{p_{1}}} u \|_{0}^{2}
	\\
	+
	\sum_{\nu =1}^{q-1}
	\sum_{j_{2} =1}^{n_{1}} \sum_{p_{1} =1}^{m} \sum_{p_{2} =1}^{m} 
	C_{3} (2C_{a})^{2(\nu +2)} q^{2\nu}
	\| a_{n+p_{2},j_{2}} (t) D^{q-\nu-1}_{t_{r}}  D_{x_{p}} u \|_{0}^{2}
	+
    2\|  D^{q}_{t_{r}} u\|_{0}^{2},
	\end{multline*}
\end{linenomath}
where $C_{3}$ and $C_{a}$ are suitable positive constant independent of $q$ and $\nu$.
So, $J_{2,2}$ is estimated by term that we have already met several times.  
We remark that the second term can be estimate as done in \eqref{eq:Est_H12_1},
producing terms that give analytic growth rate directly or terms,
as the first, that can be estimated restarting the process via the subelliptic estimate. 
About these terms, as showed previously, after a suitable number of iterations,
they will give terms of the form \eqref{eq:J_6-iter}, that gives analytic growth.\\

Term $J_{2,1} $. We use a partition of unity in $\mathbb{T}^{n}$
and  the assumptions  \eqref{A3_1}: 
\begin{linenomath}
	\begin{equation*}
	|a_{j,n+p}(t)| \leq C_{\star} a_{r,r}(t'),
	\end{equation*}
\end{linenomath}
for every $j\in \{1,\dots,n_{1} \}$, $p\in \{1, \dots, m \}$ and $r\in \{n_{1}+1, \dots, n \}$.\\
Let $\Omega_{s}$ and $\rho_{s}(t)$ be as before. Using the same strategy adopted
to obtain \eqref{eq:cp_1}, \eqref{eq:cp_2}, \eqref{eq:ajp_1} and \eqref{eq:Dajp_1}
we have
\begin{linenomath}
	\begin{equation}
	\label{eq:ajnp_2}
	a_{j,n+p}(t) = \widetilde{a}_{j,n+p}(t)\,
	\widetilde{\text{\textcursive{q}}}_{\,j,p}(t_{1}, \widetilde{t}, t'') \text{\textcursive{p}}_{r}  (t_{1}, \widetilde{t}\,),
	\end{equation}
\end{linenomath}
in $\Omega_{s}$, where $\widetilde{a}_{j,n+p}(t) $ is an analytic function no where vanishing in $\Omega_{s}$,
$\text{\textcursive{p}}_{r} (t_{1}, \widetilde{t}\,) $ is  as in \eqref{eq:arr}
and $\widetilde{\text{\textcursive{q}}}_{\, j,p}(t_{1}, \widetilde{t}, t'')$ 
denotes a Weierstrass polynomial of suitable degree.
Since the dependence of $a_{j,n+p}(t)$ with respect to $t''$ is confined to the first two factors,
$\widetilde{a}_{j,n+p}(t)$ and $\widetilde{\text{\textcursive{q}}}_{\,j,p}(t)$, we have 
\begin{linenomath}
	\begin{equation}
	\label{eq:Dajnp_2}
	 D_{t_{r}}^{\nu}  a_{j,n+p}(t)= 
	\left[  D_{t_{r}}^{\nu}  
	( \widetilde{a}_{n+j,n+p}(t)\, \widetilde{\text{\textcursive{q}}}_{\, j,p}(t) ) \right] 
	\left[\widetilde{a}_{r,r}(t')\right]^{-1} a_{r,r}(t'),
	\end{equation}
\end{linenomath}
where $ \widetilde{a}_{r,r}(t') $ is an analytic function no where vanishing in $\Omega_{s}$, \eqref{eq:arr}.\\
We set $\widetilde{a}_{j,n+p}(t)\, \widetilde{\text{\textcursive{q}}}_{\, j,p}(t_{1}, \widetilde{t}, t'') =\dbtilde{\text{\textcursive{q}}}_{\, j,p}(t) $.\\
Taking advantage from \eqref{Dtj_1n}, we have
\begin{linenomath}
	\begin{multline*}
	2 \sum_{\nu =1}^{q-1}
	\sum_{j =1}^{n_{1}} \sum_{p =1}^{m} \binom{q}{\nu} 
	|\langle E_{\ell} \rho_{s} (t)\left( D_{t_{r}}^{\nu} a_{j,n+p} (t) \right)  D^{q-\nu-1}_{t_{r}} D_{t_{j}} D_{x_{p}} u, E_{\ell} D_{t_{r}} D^{q}_{t_{r}} u\rangle |
	\\
	=
	2 \sum_{\nu =1}^{q-1}
	\sum_{j =1}^{n_{1}} \sum_{p =1}^{m} \binom{q}{\nu} 
	|\langle E_{\ell} \rho_{s} (t)\left( D_{t_{r}}^{\nu}   \dbtilde{\text{\textcursive{q}}}_{\, j,p}(t)  \right)  
	\left[\widetilde{a}_{r,r}(t')\right]^{-1} a_{r,r}(t')
	D^{q-\nu-1}_{t_{r}} D_{t_{j}} D_{x_{p}} u,
	 \\
	\hspace{28em}
	 E_{\ell} D_{t_{r}} D^{q}_{t_{r}} u\rangle |
	\\
	\leq
	2 \sum_{\nu =1}^{q-1}
	\sum_{j =1}^{n_{1}} \sum_{p =1}^{m} \binom{q}{\nu} 
	|\langle E_{\ell} \rho_{s} (t)\left( D_{t_{r}}^{\nu}   \dbtilde{\text{\textcursive{q}}}_{\, j,p}(t)  \right)  
	\left[\widetilde{a}_{r,r}(t')\right]^{-1} 
	D^{q-\nu-1}_{t_{r}} D_{t_{j}} D_{x_{p}} u, 
	 \\
	\hspace{26em}
	E_{\ell}  a_{r,r}(t') D_{t_{r}} D^{q}_{t_{r}} u\rangle |
	\\
	+
	2 \sum_{\nu =1}^{q-1}
	\sum_{j =1}^{n_{1}} \sum_{p =1}^{m} \binom{q}{\nu} 
	|\langle \rho_{s} (t)\left( D_{t_{r}}^{\nu}   \dbtilde{\text{\textcursive{q}}}_{\, j,p}(t)  \right)  
	\left[\widetilde{a}_{r,r}(t')\right]^{-1} 
	D^{q-\nu-1}_{t_{r}} D_{t_{j}} D_{x_{p}} u, 
	 \\
	\hspace{22em}
	 [ a_{r,r}(t'), E_{\ell}^{*}E_{\ell}] D_{t_{r}} D^{q}_{t_{r}} u\rangle |.
	\end{multline*}
\end{linenomath}
%
We point out that $[ a_{r,r}(t'), E_{\ell}^{*}E_{\ell}]  $ is a pseudodifferential operator
of order $-1$ then $ [ a_{r,r}(t'), E_{\ell}^{*}E_{\ell}] D_{t_{r}}$ is a pseudodifferential operator of order zero.
The left hand side of the above estimate can bounded by
\begin{linenomath}
	\begin{multline*}
	 \sum_{\nu =1}^{q-1}
	\sum_{j_{2} =1}^{n_{1}} \sum_{p_{1} =1}^{m} 
	C_{4,\delta_{1}} C_{a}^{2(\nu +1)} q^{2\nu}
	\|P^{j_{2}}  D^{q-\nu-1}_{t_{r}}  D_{x_{p_{1}}} u\|_{0}^{2}
	\\
	+
	 \sum_{\nu =1}^{q-1}
	\sum_{j_{2} =1}^{n_{1}} \sum_{p_{1} =1}^{m} \sum_{p_{2} =1}^{m}
	C_{4,\delta_{1}} C_{a}^{2(\nu +1)} q^{2\nu}
	\| a_{n+p_{2},j_{2}} (t)  D^{q-\nu-1}_{t_{r}}  D_{x_{p_{1}}} D_{x_{p_{2}}}u\|_{0}^{2}
	\\
	+
	\sum_{\nu =1}^{q-1}
	 \sum_{p_{1} =1}^{m}
	m \| a_{n+p_{2},r} (t)  D^{q}_{t_{r}}  D_{x_{p_{1}}} u\|_{0}^{2}
	+
	2\| D^{q}_{t_{r}} u\|_{0}^{2} + \delta_{1} \| P^{r} D^{q}_{t_{r}} u\|_{0}^{2},
	\end{multline*}
\end{linenomath}
where $\delta_{1}$ is a suitable small constant as before and $C_{4,\delta_{1}}$ is a positive constant depending
of $\delta_{1}^{-1}$ but independent of $q$ and $\nu$. 
The last term on the right hand side can be absorbed on the left hand side of \eqref{eq:Est_Dt}.
The second to last term can be estimate as in \eqref{eq:Est_Rm} obtaining a
term that gives analytic growth and a term which can be absorbed
on the left hand side of \eqref{eq:Est_Dt}.
Concerning the other terms, they were already dealt several times,
more precisely the second and the third term can be estimate as done in \eqref{eq:Est_H12_1},
producing terms that give analytic growth directly or terms,
as the first, that can be estimated restarting the process via the subelliptic estimate. 
About these terms, as showed previously, after a suitable number of iterations,
they will give terms of the form \eqref{eq:J_6-iter}, that gives analytic growth.\\
Summing up, the term $J_{2}$ can be estimate by terms that give analytic growth,
modulo terms that can be absorbed on the right hand side of  \eqref{eq:Est_Dt}.

\bigskip
This ends the analysis of the terms on the right hand side of \eqref{eq:Est_T_S1},
$J_{1}, \dots, J_{7}$. We have obtained that all of them gives analytic growth rate
modulo terms that can be absorbed on the left hand side of  \eqref{eq:Est_Dt}.
We can conclude that
\begin{linenomath}
	\begin{equation*}
	\|D^{q}_{t_{r}} u \|_{\varepsilon}^{2}+ \sum_{j=1}^{N} \| P^{j} D^{q}_{t_{r}} u\|^{2}_{0} +  \sum_{j=1}^{n}\|P_{j}D^{q}_{t_{r}} u\|_{-1}^{2}
	\leq C^{2(q+1)} q!^{2}, 
	\end{equation*} 
\end{linenomath}
for all $r\in \{n_{1}+1,\dots, n   \}$,
where $C$ is a suitable positive constant independent of $q$.
This implies that
\begin{linenomath}
	\begin{equation*}
	\|\Delta_{t''}^{q}u \|_{0} \leq C_{1}^{q+1} q!^{2}, \qquad \forall q\in \mathbb{Z}_{+},
	\end{equation*}
\end{linenomath}
where $C_{1}$ is suitable large constant.\\
By this estimate and  the Proposition \ref{Pr_BCM},  we obtain that 
the points of the form $(t', t'',x, 0,\tau'', 0)$ don't belong to $WF_{a}(u)$.
In view of just shown, the estimate \eqref{eq:Dx_Est}, and the Remark \ref{Rmk2}, we 
conclude that $WF_{a}(u)$ is empty, i.e. $u \in C^{\omega}(\mathbb{T}^{N})$, 
which proves our statement, i.e. that the operator $P$, \eqref{Op_Th2}, is globally analytic hypoelliptic.
%
%
\section{Appendix(Some examples)}
\renewcommand{\theequation}{\thesection.\arabic{equation}}
\setcounter{equation}{0} \setcounter{theorem}{0}
\setcounter{proposition}{0} \setcounter{lemma}{0}
\setcounter{corollary}{0} \setcounter{definition}{0}
\setcounter{remark}{1}
%
In this section we collect some examples of differential operators which belong/ not belong to the class 
described by the assumptions $\mathbf{(A1)}$, $\mathbf{(A2)}$ and $\mathbf{(A3)}$. 
\vskip-5mm
\textbf{Case of sums of squares of H\"ormander type}.~\par
\vskip-6mm
The assumptions allow to include in the class under investigation the global analog of
some well known examples:
\begin{example}
Let $n_{1}=1$, $n=2$ and $m=1$. We consider the sum of squares operator
\begin{linenomath}
	\begin{equation*}
	D_{t_{1}}^{2} + a^{2}(t_{1})D_{t_{2}}^{2} + b^{2}(t_{1}) D_{x}^{2},
	\end{equation*}
\end{linenomath}
where $a$ and $b$ are real analytic functions not identically zero.\\
If we assume that $|b(t_{1})| \leq Ca^{2}(t_{1})$, where $C$ is a positive constant,
then the operator not only satisfies the assumptions $\mathbf{(A1)}$ and $\mathbf{(A2)}$,
but also \eqref{A3_1}, so it belongs to the class studied.\\
It can be seen as the globally defined version of
\vspace{-1.8em}
\begin{itemize}
	\item[i)] an elliptic operator if neither $a$ nor $b$ vanish;
	\item[ii)] the Baouendi-Goulaouic operator if  $a(t_{1})$ does not vanish and $b(t_{1})$ vanishes;
	              we recall that the local version of the Baouendi-Goulaouic operator (\cite{BG-72})
	              is given by $D_{t_{1}}^{2} + D_{t_{2}}^{2} + t_{1}^{2q} D_{x}^{2} $, $q\in \mathbb{Z}_{+}$;
	              the global analytic hypoellipticity of this operator was already studied in \cite{ch94}, \cite{Tart_96} and \cite{BC-2022};
	\item[iii)] a special version of the Ole\u \i nik-Radkevi\v c operator if $b(t_{1})$ vanishes only where $a(t_{1})$ vanishes;
	                we recall that the local version of the Ole\u \i nik-Radkevi\v c operator (\cite{OR1971}, \cite{OR1973}) is given
	                by $D_{t_{1}}^{2} +t^{2(p-1)}_{1} D_{t_{2}}^{2} + t_{1}^{2(q-1)} D_{x}^{2} $, $p,q\in \mathbb{Z}_{+} $, $1< p\leq q$
	                (the sharp local Gevrey regularity was studied in \cite{BT-97} and \cite{Ch_14}.) We point out that 
	                if in a neighborhood of the origin $a(t_{1})= \mathscr{O}(t_{1}^{p-1})$ and  $b(t_{1})= \mathscr{O}(t_{1}^{q-1})$
	                due to the assumption \eqref{A3_1} then the Ole\u \i nik-Radkevi\v c operator belongs to
	                the class studied if $q \geq 2p -1$; so our assumptions do not allow to cover all cases.
\end{itemize}
\end{example}
\begin{example}
	Let $n_{1}=1$, $n=2$ and $m=2$. We consider the sum of squares operator
	\begin{linenomath}
		\begin{equation}\label{Op_Ex2}
		D_{t_{1}}^{2} + a^{2}(t_{1})D_{t_{2}}^{2} + b^{2}(t_{1}) \left( D_{x_{1}}^{2}+ D_{x_{2}}^{2}\right) + c^{2}(t_{1},t_{2})  D_{x_{1}}^{2} + d^{2}(t_{1},t_{2})  D_{x_{2}}^{2},
		\end{equation}
	\end{linenomath}
where $a$, $b$, $c$ and $d$ are real analytic functions not identically zero.\\
We assume that $|b(t_{1})| \leq C a^{2}(t_{1})$, $|c(t_{1},t_{2})| \leq C a^{2}(t_{1})$ and $|d(t_{1},t_{2})| \leq Ca^{2}(t_{1}) $,
where $C$ is a positive constant, thus the operator satisfies the assumptions $\mathbf{(A1)}$, $\mathbf{(A2)}$
and \eqref{A3_1}, so it belongs to the class studied.\\
It can be seen as the globally defined version of the following operators:
\vspace{-1.8em}
	\begin{itemize}
		\item[i)]Assume that $a(t_{1})$ does not vanish, the assumption \eqref{A3_1}
		            implies that it is a generalization of the following operator
	            	  \begin{linenomath}
		         	    \begin{equation*}
			                D_{t_{1}}^{2} + D_{t_{2}}^{2} + b^{2}(t_{1}) \left( D_{x_{1}}^{2}+ D_{x_{2}}^{2}\right) + c^{2}(t_{1})  D_{x_{1}}^{2} + d^{2}(t_{1})  D_{x_{2}}^{2},
			            \end{equation*}
		               \end{linenomath}
		               here $a(t_{1})\equiv 1$. If we assume that $b$, $c$ and $d$ vanish at the origin, more precisely
		               $b(t_{1})= \mathscr{O}(t_{1}^{r-1})$, $c(t_{1})= \mathscr{O}(t_{1}^{p-1})$ and $d(t_{1})= \mathscr{O}(t_{1}^{q-1})$,
		               $r,p, q\in \mathbb{Z}_{+}$, $1<r<p<q$, then the operator is the glolly defined version of the operator
		               \begin{linenomath}
		               	$$
		               	 D_{t_{1}}^{2} + D_{t_{2}}^{2} + t_{1}^{2(r-1)}\left( D_{x_{1}}^{2}+ D_{x_{2}}^{2}\right) + t_{1}^{2(p-1)}  D_{x_{1}}^{2} + t_{1}^{2(q-1)}  D_{x_{2}}^{2};
		               	 $$
		               \end{linenomath}
		               which has been showed in \cite{ABM-16} to violate the local Treves conjecture (i.e. is not locally analytic hypoelliptic.)    
		               In this case the operator belong to the class studied in \cite{ch94}, \cite{BC-2022}, see also \cite{C-22}.
		\item[ii)] Assume that $a(t_{1})$ vanish at the origin, more precisely that $a(t_{1}) =t^{\ell}\widetilde{a}(t_{1})$, where $\widetilde{a}(0)\neq 0$
		               and $\ell \in \mathbb{Z}_{+}$. The assumption \eqref{A3_1} implies that $b,c,d= \mathscr{O}(t^{2\ell})$.
		               Assume that $ b = \mathscr{O}(t_{1}^{2\ell+r-1}) $,  $ r \in \mathbb{Z}_{+} $, $ r > 1 $, 
		               $ t_{1}^{-2\ell} c(t_{1},t_{2}) = \mathscr{O}(t_{2}^{p-1}) $, and that $ t_{1}^{-2\ell} d(t_{1},t_{2}) = \mathscr{O}(t_{2}^{q-1}) $, 
		               $ p $, $ q \in \mathbb{Z}_{+} $, $ 1 < r < p < q$.\\
		               Then the operator \eqref{Op_Ex2} is the global analog of the operator
		               \begin{linenomath}
		               	\begin{equation*}
		               	\hspace{2.6em}D_{t_{1}}^{2} + t_{1}^{2\ell} D_{t_{2}}^{2} + t_{1}^{2(2\ell+r-1)}\left(D_{x_{1}}^{2} +
		               	D_{x_{2}}^{2}\right) + t_{1}^{4\ell}\Big( t_{2}^{2(p-1)} D_{x_{1}}^{2} + t_{2}^{2(q-1)}
		               	D_{x_{2}}^{2} \Big) .
		               	\end{equation*}
		               \end{linenomath}
		               Although at the moment, as far as we know, there is no explicit proof, 
		               it is believed that this operator is not locally analytic hypoelliptic,
		               this should be a consequence of the presence of a non-symplectic Treves stratum.
		               We point out that in this case the operator \eqref{Op_Ex2} does not belong to the class
		               studied by Cordaro and Himonas \cite{ch94} but belong to the class studied in \cite{BC-2022}.
	\end{itemize}
\end{example}
\begin{remark}
	\label{Rk:Appendix}
A class of globally analytic hypoelliptic sums of squares that don't satisfy
the assumption $\mathbf{(A3)}$. 
More precisely, the assumption \eqref{A3_1} excludes the following operators
	\begin{linenomath}
		\begin{equation}\label{OP_Rmk}
		D_{t_{1}}^{2} + a^{2}(t_{1})D_{t_{2}}^{2} + b^{2}(t_{1}) \left( D_{x_{1}}^{2}+ D_{x_{2}}^{2}\right) + c^{2}(t_{1},t_{2})  D_{x_{1}}^{2} + d^{2}(t_{1},t_{2})  D_{x_{2}}^{2},
		\end{equation}
	\end{linenomath}
where $ a $ vanishes at the origin,
$ a(t_{1}) = t_{1}^{\ell} \tilde{a}(t_{1}) $, $ \ell \in \mathbb{Z}_{+} $ and $ \tilde{a}(0)
\neq 0$,  $ b = \mathscr{O}(t_{1}^{\ell+r-1}) $,  $ r \in \mathbb{Z}_{+} $ and $ r > 1 $,
$ t_{1}^{-\ell} c(t_{1},t_{2}) = \mathscr{O}(t_{2}^{p-1}) $ and  
$ t_{1}^{-\ell} d(t_{1},t_{2}) = \mathscr{O}(t_{2}^{q-1}) $,  $ p $, $ q \in \mathbb{Z}_{+} $
and $ 1 < r < p < q$. 
This operator is the global analog of 
\begin{linenomath}
$$ 
D_{t_{1}}^{2} + t_{1}^{2\ell} D_{t_{2}}^{2} + t_{1}^{2(\ell+r-1)}\left(D_{x_{1}}^{2} +
D_{x_{2}}^{2}\right) + t_{1}^{2\ell}\Big( t_{2}^{2(p-1)} D_{x_{1}}^{2} + t_{2}^{2(q-1)}
D_{x_{2}}^{2} \Big).
$$
\end{linenomath}
It has a symplectic characteristic real analytic manifold having a
symplectic stratification according to Treves. We recall that has been proved to
violate Treves conjecture in \cite{BM-16}, so it is not locally analytic hypoelliptic.
This operator does not belong to the class studied in \cite{ch94}.
In \cite{BC-2022} was showed that this operator is globally analytic hypoelliptic,
see also \cite{C-22}. 
To show the global analytic hypoellipticity  of \eqref{OP_Rmk}, it is enough to assume
that $|b(t_{1}) | \leq C |a(t_{1})|$, $|c(t_{1},t_{2}) |\leq  C |a(t_{1})|$ and $|d(t_{1},t_{2})| \leq  |a(t_{1})|$,
where $C$ is a positive constant, i.e. the assumption $(1.7)$ in \cite{BC-2022}.
So, in the case of sums of squares the assumption \eqref{A3_1} 
(i.e. that $|b(t_{1}) | \leq C |a^{2}(t_{1})|$, $c(t_{1},t_{2}) \leq  C|a^{2}(t_{1})|$ and $d(t_{1},t_{2}) \leq C|a^{2}(t_{1})|$ )
is stronger with respect to the assumption $(1.7)$ in \cite{BC-2022}. 
At the present, however, to treat the more general case of  H\"ormander-Ole\u{\i}nik-Radkevi\v c operators,
we are not able to make our hypothesis less strong. We think that in this situation the special form
in which these operators can be written, i.e. as sums of squares,
plays a role in the hypotheses to be assumed.
We also point out that in the situation of sums of squares, the 
Ole\u{\i}nik-Radkevi\v c-H\"ormander subelliptic estimate, see \eqref{eq:B-Est},
is weaker than the sharp Rothschild-Stein estimate, \cite{RS},
which is used in \cite{BC-2022}.

\end{remark}
\vskip-5mm
\textbf{Case of operators that are not of H\"ormander type}.~\par
\vskip-6mm
We give some examples of operators of the form \eqref{Op_ORH_1} 
which can not be written as sum of squares operator,
i.e. it doesn't belong to the H\"ormander's class of operators.
In order to it we follow the ideas in \cite{OR1971}.\\
Let us first recall some known facts (for more details on the subject see \cite{Marshall_Book}.)
We denote by $P_{d,n}$ the set  of homogeneous polynomials of degree $d$ in $n$ variables with coefficients in $\mathbb{R}$
which are positive semidefinite (i.e. non-negative) and by $\Sigma_{d,n}$ the subset of $P_{d,n}$ consisting of
all homogeneous polynomials that are sum of squares.
The following result is due to Hilbert
\begin{theorem}[\cite{Hilbert}]
For $d$ even, $P_{d,n}= \Sigma_{d,n}$ if and only if $n\leq 2$ or $d=2$ or $n=3$ and $d=4$.
\end{theorem}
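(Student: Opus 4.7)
The plan is to separate the ``if'' and ``only if'' directions, dispose of the two elementary forward cases, and then isolate Hilbert's delicate ternary-quartic argument and the explicit counterexamples for the other direction.

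For the forward implication, the case $d=2$ is immediate: every real symmetric matrix diagonalises orthogonally, and membership in $P_{2,n}$ translates to nonnegative eigenvalues, so any $f\in P_{2,n}$ equals $\sum \lambda_i y_i^2$ with $\lambda_i\geq 0$, which is manifestly a sum of squares. For $n\leq 2$ the plan is to factor the form over $\mathbb{R}$ into linear and real-irreducible quadratic factors; nonnegativity forces each linear factor to occur with even multiplicity, each irreducible quadratic factor $ax^2+bxy+cy^2$ with $b^2-4ac<0$ is visibly a sum of two squares after completing the square, and the Brahmagupta--Fibonacci identity $(A^2+B^2)(C^2+D^2)=(AC-BD)^2+(AD+BC)^2$ propagates the SOS property through the product. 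The remaining forward case $(n,d)=(3,4)$ is Hilbert's original contribution, and the strategy there is to pass to the Veronese embedding $\mathbb{P}^2\hookrightarrow\mathbb{P}^5$ and exploit the fact that a generic nonnegative ternary quartic lies on a suitable secant of the Veronese surface, yielding an explicit representation as a sum of three squares of quadratic forms.

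For the ``only if'' direction, the task reduces to producing explicit counterexamples in the two minimal excluded cases $(n,d)=(3,6)$ and $(4,4)$, via the following bootstrap: if $f\in P_{d,n}\setminus \Sigma_{d,n}$, then the same $f$ viewed in one additional variable lies in $P_{d,n+1}\setminus \Sigma_{d,n+1}$, since any SOS decomposition in $n+1$ variables would remain one after setting the new variable to zero; a parallel Newton-polytope argument of Reznick type promotes a counterexample in degree $d$ to one in degree $d+2$. For $(3,6)$ one takes the Motzkin form $M(x,y,z)=x^4y^2+x^2y^4-3x^2y^2z^2+z^6$, whose nonnegativity follows by AM--GM applied to $x^4y^2$, $x^2y^4$, $z^6$, and whose failure to be a sum of squares is verified by enumerating the possible monomial supports of the $g_i$ in any hypothetical decomposition $M=\sum g_i^2$ and deriving a contradiction from the coefficient of $x^2y^2z^2$. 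For $(4,4)$ the analogous role is played by the Choi--Lam form $Q(x,y,z,w)=w^4+x^2y^2+y^2z^2+z^2x^2-4xyzw$, treated by the same coefficient-matching technique.

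The principal obstacle will be the ternary-quartic case of the forward direction. The elementary factorisation and diagonalisation arguments break down outright in three variables of degree four, and one is forced into genuinely projective-algebraic reasoning about the cone of nonnegative ternary quartics and its relation to the secant varieties of the Veronese surface in $\mathbb{P}^5$. Hilbert's original proof, and all subsequent refinements (via Gram spectrahedra or the Powers--Reznick--Scheiderer--Sottile analysis), require this algebro-geometric input; there is no purely combinatorial or inductive shortcut past it.
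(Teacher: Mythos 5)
The paper states this as a classical background result, attributed to Hilbert's 1888 paper, and does not prove it; in the Appendix it is invoked only to justify, via the Motzkin polynomial and Lemma~\ref{L_Hor}, that operators such as \eqref{Ex_Op_2_1} cannot be written as H\"ormander sums of squares. There is therefore no ``paper proof'' to compare against, and I will assess your outline on its own terms.

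The elementary parts of your plan are sound and standard. For $d=2$, orthogonal diagonalisation is correct. For $n\le 2$, real factorisation into linear factors of even multiplicity and positive-definite irreducible quadratics, each of which is a sum of two squares by completing the square, does the job; the Brahmagupta--Fibonacci identity keeps the representation at two squares, though for mere SOS membership you only need that a product of SOS forms is SOS. For the ``only if'' direction, the two bootstrap lemmas are correct: restriction of a hypothetical decomposition to a coordinate hyperplane handles the passage $n\to n+1$, and multiplication by $x_1^2$ handles $d\to d+2$ (if $x_1^2 f=\sum g_i^2$, then setting $x_1=0$ forces $x_1\mid g_i$, so $f$ itself would be SOS — this is really a divisibility argument and does not need the full Reznick Newton-polytope machinery, though that works too). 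Your two seeds, the Motzkin form for $(n,d)=(3,6)$ and the Choi--Lam form for $(4,4)$, are the right choices; the coefficient-matching obstruction in each case (the coefficient of $x^2y^2z^2$, resp.~of $xyzw$, must be a nonnegative sum of squares of entries yet is strictly negative once the supports of the $g_i$ are pinned down) is the correct argument, and the two seeds together with the two bootstraps cover every $(n,d)$ with $n\ge 3$, $d\ge 4$ even, $(n,d)\ne(3,4)$.

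The genuine gap is the one you yourself flag: the ternary-quartic case of the ``if'' direction is asserted, not proved. The remark that a \emph{generic} nonnegative ternary quartic lies on a secant of the Veronese surface does not yet give the theorem. One must in particular treat the boundary of the cone $P_{4,3}$, i.e.\ forms with real projective zeros, and one must pass from ``generic'' to ``every''; that passage is exactly the hard content of Hilbert's argument (a degeneration/continuity argument, or in modern treatments a count of components of the complement of the discriminant, or the Powers--Reznick--Scheiderer--Sottile spectrahedral analysis). As written this step is an appeal to authority rather than a proof, so the proposal is a correct outline with the central case left open.
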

\vspace{-1.5em}
The first example of a polynomial in two variables that can not be written as sum of squares is due to Motzkin \cite{Motzkin}:
\begin{linenomath}
	\begin{equation*}
	M_{0}(y,z)= 1 + y^{4}z^{2} + y^{2} z^{4} - 3y^{2}z^{2}.
	\end{equation*}
\end{linenomath}
Using $M_{0}$ can be constructed a polynomial in $P_{6,3}\setminus \Sigma_{6,3}$:
\begin{linenomath}
	\begin{equation}\label{eq:Motz}
	M(x,y,z)=x^{6}M_{0}\left(\frac{y}{x},\frac{z}{x}\right)= x^{6} + y^{4}z^{2} + y^{2} z^{4} - 3x^{2}y^{2}z^{2}.
	\end{equation}
\end{linenomath}
and more in general $M_{d}(x,y,z)=x^{d}M_{0}\left(\frac{y}{x},\frac{z}{x}\right)$,
where $d$ is even and greater than six,  $M_{d} \in P_{d,3}\setminus \Sigma_{d,3}$, named  homogenized Motzkin polynomials.\\
We recall the following result 
\begin{lemma}[Lemma 2.6.5 in \cite{OR1971}]\label{L_Hor}
	Let
	\begin{linenomath}
		\begin{equation*}
		A(x,y,z)= M(x,y,z) +P_{1}(x,y,z),
		\end{equation*}
	\end{linenomath}
	where $P_{1}(x,y,z)$ is an infinitely differentiable function which vanishes at the origin together with all its derivatives up to the sixth order inclusively.\\
	The infinitely differentiable function $A(x,y,z)$ is not representable in the form of finite sum of squares of infinitely differentiable
	functions in any neighborhood of the origin.
\end{lemma}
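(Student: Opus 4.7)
The plan is to argue by contradiction: assume that on some neighborhood $U$ of the origin one has a representation $A(x,y,z) = \sum_{j=1}^{k} f_{j}(x,y,z)^{2}$ with $f_{j} \in C^{\infty}(U;\mathbb{R})$ and a \emph{finite} $k$, and then extract from it a representation of the homogenized Motzkin polynomial $M$ as a finite sum of squares of real polynomials of degree three. This will contradict the classical Hilbert/Motzkin fact $M \in P_{6,3}\setminus \Sigma_{6,3}$ recalled just above the statement.

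The first step is to determine the order of vanishing of each $f_{j}$ at the origin. Since $M$ is homogeneous of degree six and $P_{1}$, together with all its derivatives of order $\leq 6$, vanishes at the origin, Taylor's formula gives $|A(x,y,z)| \leq C_{1}\|(x,y,z)\|^{6}$ near $0$. From $f_{j}^{2} \leq A$ one obtains $|f_{j}(x,y,z)| \leq C_{2}\|(x,y,z)\|^{3}$. Because $f_{j}$ is smooth, this in turn forces every partial derivative of $f_{j}$ at $0$ of order $\leq 2$ to vanish, so one may write
\begin{equation*}
 f_{j}(x,y,z) = p_{j}(x,y,z) + q_{j}(x,y,z),
\end{equation*}
with $p_{j}$ a real homogeneous polynomial of degree three and $q_{j}\in C^{\infty}$ satisfying $|q_{j}(x,y,z)| \leq C_{3}\|(x,y,z)\|^{4}$.

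Squaring and summing yields $A = \sum_{j=1}^{k} p_{j}^{2} + R$ with $|R(x,y,z)| \leq C_{4}\|(x,y,z)\|^{7}$. Recalling $A = M + P_{1}$ and $|P_{1}(x,y,z)| \leq C_{5}\|(x,y,z)\|^{7}$, one obtains the estimate
\begin{equation*}
 \Big| M(x,y,z) - \sum_{j=1}^{k} p_{j}(x,y,z)^{2} \Big| \leq C_{6}\|(x,y,z)\|^{7}
\end{equation*}
in a neighborhood of the origin. The expression inside the absolute value is a homogeneous polynomial of degree six; a nonzero degree-six homogeneous polynomial cannot be $O(\|(x,y,z)\|^{7})$ at $0$. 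Therefore $M = \sum_{j=1}^{k} p_{j}^{2}$ as an identity in $\mathbb{R}[x,y,z]$, which contradicts $M \notin \Sigma_{6,3}$ and proves the lemma.

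The delicate point I expect is the passage from the pointwise bound $|f_{j}(x,y,z)|\leq C_{2}\|(x,y,z)\|^{3}$ to the existence of a homogeneous leading part $p_{j}$ of degree three: this relies crucially on the hypothesis that the $f_{j}$ are infinitely differentiable, so that Taylor's theorem applied at the origin forces the vanishing of the jet of $f_{j}$ up to order two. Everything after that is a purely algebraic comparison of homogeneous components of degree six, combined with the classical Hilbert/Motzkin non-decomposability of $M$, already recalled in the excerpt.
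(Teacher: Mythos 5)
Your proof is correct, and it is essentially the standard argument from the cited reference [OR1971], which the paper invokes without reproducing: from $f_j^2 \le A = O(\lVert\cdot\rVert^6)$ one extracts the cubic Taylor jets $p_j$ of the $f_j$, compares degree-six homogeneous parts, and contradicts $M\notin\Sigma_{6,3}$. All the Taylor-remainder estimates you use ($|P_1|=O(\lVert\cdot\rVert^7)$, $|q_j|=O(\lVert\cdot\rVert^4)$, and the vanishing of the $2$-jet of $f_j$) are justified by the $C^\infty$ hypotheses, so the argument is complete.
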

\begin{example}\label{Ex1}
Let $n_{1}=n=3$ and $m=1$. We consider the operator
\begin{linenomath}
	\begin{equation}\label{Ex_Op_2_1}
	P(t,D_{t},D_{x})=\Delta_{t}+ \widetilde{M}(t)D^{2}_{x} +d(t,x),  
	\end{equation}
\end{linenomath}
where $d\in C^{\omega}(\mathbb{T}^{4})$, $t=(t_{1},t_{2},t_{3})$, and
\begin{linenomath}
	\begin{equation}\label{Motz_Ex}
	\widetilde{M}(t_{1},t_{2},t_{3})= M\left(\sin t_{1}, \sin t_{2}, \sin t_{3}\right),
	\end{equation}
\end{linenomath}
where $M$ is the homogenized Motzkin polynomial, \eqref{eq:Motz}, where we replace $x$, $y$ and $z$
by $\sin t_{1}$, $\sin t_{2}$ and $\sin t_{3}$ respectively. We remark that $\widetilde{M}(t)\geq 0$ and 
$\widetilde{M}(2k\pi)=0$, for every $k \in \mathbb{Z}$.\\
We show that $P$ does not belong to H\"ormaner's class.
If $P$ belonged to it, we should have that
\begin{linenomath}
\begin{equation}\label{Eq:Ex1}
    	\Delta_{t}+ \widetilde{M}(t) D_{x}^{2}
    	=\sum_{j=1}^{r} X_{j}^{2} +i X_{0},
\end{equation}
\end{linenomath}
where $X_{j}$, $j=0,1,\dots, r$, are suitable vector fields of the form
\begin{linenomath}
    	\begin{equation*}
    	X_{j}= \sum_{\ell =1}^{3} a_{j,\ell}(t) D_{t_{\ell}} +  a_{j,4}(t) D_{x}. 
    	\end{equation*}
 \end{linenomath}
Equating the coefficients of second derivatives on the left and right hand side of \eqref{Eq:Ex1},
it follow that
\begin{linenomath}
 \begin{equation*}
    	\sum_{j=1}^{r} \left( a_{j,4} (t)\right)^{2}= \widetilde{M}(t) , 
    	\quad \forall t.
\end{equation*}
\end{linenomath}
Now, if we represent $\widetilde{M}(t) $ in the form of partial Taylor series expansion
in a neighborhood of $t=0$, we should have that
\begin{linenomath}
   \begin{equation*}
     \sum_{j=1}^{r} \left( a_{j,4} (t)\right)^{2}= M(t) +P_{1}(t),
   \end{equation*}
 \end{linenomath}
where $M(t)$ is the homogenized Motzkin polynomial, \eqref{eq:Motz}, and $P_{1}(t)$
is an infinitely differentiable function which vanishes at the origin together with all its derivatives up to the sixth order inclusively.
But this impossible by the Lemma \ref{L_Hor}. So, the operator $P$ can not be written as sum of squares.
 
Moreover, we observe that the operator $P$, satisfies the assumptions $\mathbf{(A1)}$ and $\mathbf{(A2)}$, case
$n_{1}=n$, here $P^{k}= D_{t_{k}}$, $P^{4}= \widetilde{M}(t) D_{x}$ and $P_{k} = \frac{\partial \widetilde{M} }{\partial t_{k}} (t) D_{x}^{2} $, $k=1,2,3$.
By the Theorem \ref{Th_1} it is globally analytic hypoelliptic.
\end{example}
\begin{example}Let $n_{1}=3$, $n=4$ and $m=1$. We consider the operator
\begin{linenomath}
 \begin{equation}\label{Ex_Op_2_2}
	P(t,D_{t},D_{x})=\Delta_{t'}+ \widetilde{M}(t')D_{t_{4}}^{2} + f(t) D^{2}_{x},  
\end{equation}
\end{linenomath}
where $t=(t_{1},t_{2},t_{3},t_{4})$, $t'=(t_{1},t_{2},t_{3})$, $\widetilde{M}(t') $ is as in \eqref{Motz_Ex}, and $f(t)$ is a non-negative real analytic function
not identically zero. 
If we assume that $f(t) = \widetilde{f}(t)\widetilde{M}^{2}(t')$, where $\widetilde{f} \in C^{\omega}(\mathbb{T}^{4}) $ and $\widetilde{f}(t)\geq 0$,
then the assumption \eqref{A3_1} is satisfied. The operator \eqref{Ex_Op_2_2} satisfies the assumptions $\mathbf{(A1)}$, $\mathbf{(A2)}$ and $\mathbf{(A3)}$,
by the Theorem \ref{Th_2} it is globally analytic hypoellptic.
We point out that following the same argument used in the Example 3,
can be showed that the operator \eqref{Ex_Op_2_2}
does not belong to the H\"ormander's class.
\end{example}
%
%

%

%
\end{document}